\def\AA{{\mathbb{A}}}
\def\RR{{\mathbb{R}}}
\def\CC{{\mathbb{C}}}
\def\QQ{{\mathbb{Q}}}
\def\NN{{\mathbb{N}}}
\def\ZZ{{\mathbb{Z}}}
\let \cedilla =\c
\renewcommand{\b}{{\frak{b}}}
\renewcommand{\a}{{\frak{a}}}
\renewcommand{\o}{{\mathcal O}}
\newcommand{\mld}{{\mathrm{mld}}}
\newcommand{\cont}{{\mathrm{Cont}}}
\newcommand{\codim}{{\mathrm{codim}}}
\newcommand{\ord}{{\mathrm{ord}}}
\newcommand{\spec}{{\mathrm{Spec \ }}}
\newcommand{\val}{{\mathrm{val}}}
\newcommand{\m}{{\frak{m}}}
\newcommand{\tm}{{\widetilde\m}}
\newcommand{\height}{{\mathrm{ht} }}
\newcommand{\lct}{{\operatorname{lct}}}
\renewcommand{\th}{{\tilde{h}}}
\renewcommand{\wr}{{\widetilde R}}
\newcommand{\tf}{{\tilde f}}
\newcommand{\we}{{\widetilde E}}
\newcommand{\ta}{{\widetilde\a}}
\newcommand{\wq}{{\widetilde{Q}}}
\newcommand{\zp}{{\ZZ/(p)}}
\renewcommand{\sp}{{S\otimes_\ZZ \ZZ/(p)}}
\renewcommand{\mod}{{\operatorname{mod}\ }}
\newcommand{\of}{{\overline{f}}}
\newtheorem{thm}{Theorem}[section]
\newtheorem{cor}[thm]{Corollary}
\newtheorem{Corollary-Definition}[thm]{Corollary-Definition}
\newtheorem{prop-def}[thm]{Proposition-Definition}
\newtheorem{prop}[thm]{Proposition}
\newtheorem{lem}[thm]{Lemma}
\newtheorem{prob}[thm]{Problem}
\newtheorem{conj}[thm]{Conjecture}
\theoremstyle{definition}
\newtheorem{defn}[thm]{Definition}
\newtheorem{exmp}[thm]{Example}
\newtheorem{rem}[thm]{Remark}
\newtheorem{Proposition-Definition}[thm]{Proposition-Definition}
\begin{document}

\title[Inversion of modulo $p$ reduction]{Inversion of ``modulo $p$ reduction" \\
and a partial descent from characteristic 0 to positive characteristic  }

\thanks{Mathematical Subject Classification: 14B05,14E18, 14B07\\
Key words: singularities in positive characteristic, jet schemes, minimal log discrepancy\\
The author is partially supported by Grant-In-Aid (c) 1605089 of JSPS in Japan.}

\author{Shihoko Ishii }
\maketitle
\begin{abstract}

In this paper we focus on pairs consisting of the affine $N$-space and
multiideals  with a positive exponent.
We introduce a method ``lifting to characteristic $0$" which is a kind of the inversion of 
``modulo $p$ reduction".
By making use of it,
we prove that  Musta\cedilla{t}\v{a}-Nakamura's conjecture  and some 
uniform bound of divisors computing log canonical thresholds descend
from characteristic 0 to   certain classes of pairs in positive characteristic.
We also pose a problem whose affirmative answer gives the descent of the statements
to the whole set of pairs in positive characteristic.

\end{abstract}
\section{Introduction}
\noindent
For studies of singularities in characteristic 0, there are many tools; resolutions of 
the singularities,  Bertini's theorem (generic smoothness), many kinds of vanishing theorems, etc.,
which are  not available  for singularities in positive characteristic.
So, in order to avoid these difficulties, one  way is to reduce our problems in positive characteristic  into the problem in characteristic 0.
In this paper, we introduce ``lifting to characteristic $0$"  which is a kind of inversion of 
``modulo $p$ reduction" 
and show that some statements in characteristic $0$ descend 
into a certain class of pairs
 in positive 
characteristic.

      In this paper we focus 
      on a special object that is
       a pair $(A, \a^e)$ consisting of the  affine space $A=\AA_k^N$
       over an algebraically closed field $k$ 
     and a ``multiideal" $\a^e=\a_1^{e_1}\cdots\a_s^{e_s}$ on $A$  with the exponent 
     $e=\{e_1,\ldots,e_s\}\subset \RR_{>0}$.
     When we say ``a pair", we always mean this object.
     Sometimes we treat the case $s=1, e_1=1$.
     
\begin{defn}
   Let $k$ be an algebraically closed field of characteristic $p>0$ and
   $\mathcal S_k$ a class of pairs $(\AA_k^N, \a^e)$.
   Let $\mathcal T_\CC$ be a class of pairs $(\AA_\CC^N, \ta^e)$.
   We say that a statement $P$ in $\mathcal T_\CC$ {\sl descends to} $\mathcal S_k$,
   when the following holds:
   
   \begin{center}
   If $P$ holds in $\mathcal T_\CC$,  then $P$ also holds in $\mathcal S_k$.
\end{center}
\end{defn}     
Note that we do not discuss about that $P$ actually holds in $\mathcal T_\CC$
but discuss about the possibility for $P$ in $\mathcal T_\CC$ to descend to $\mathcal S_k$.
In this paper we consider Musta\cedilla{t}\v{a}-Nakamura's conjecture, ACC conjecture
  for minimal
log discrepancies and a uniform bound of divisors computing log canonical thresholds 
as the statement $P$.

Musta\cedilla{t}\v{a}-Nakamura's conjecture is as follows:

\begin{conj}[$M_{N, e}$]
     Let $A=\AA_k^N$ be  defined over 
     an  algebraically closed field $k$ and let $0\in A$ be the origin. 
     Given a finite subset 
     $e\subset \RR_{>0}$, there
     is a positive integer $\ell_{N,e}$ (depending on $N$ and $e$) such that for every multiideal 
     $\a^e$ on $A$ with the
    exponent  $e$, there is a prime divisor $E$ that computes $\mld(0; A,  \a^e)$ and 
    satisfies
    $k_E \leq \ell_{N,e}$.
\end{conj}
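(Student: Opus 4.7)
The plan is to work first in characteristic zero, where resolution of singularities and vanishing theorems are at hand, so that the paper's descent machinery can then transport the conclusion to positive characteristic. First I would recast a prime divisor $E$ over $A$ computing $\mld(0;A,\a^e)$ as an irreducible cylinder $C_E$ in the arc space of $A$ centered at the origin, whose codimension is $k_E+1$. The condition that $E$ computes the $\mld$ translates into $C_E$ being an irreducible component of an intersection of contact loci $\cont^{\geq m_i}(\a_i)$ balanced against its codimension, so proving $k_E\leq \ell_{N,e}$ becomes equivalent to showing that the $\mld$ is realized on a cylinder whose codimension is bounded in terms of $N$ and $e$ only.

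Next I would attempt an induction on $N$. The cases $N=1,2$ are known (for $N=2$ via the classification of valuations on smooth surfaces). For the inductive step I would apply inversion of adjunction along a sufficiently general smooth hypersurface $H\ni 0$: a divisor over $H$ computing $\mld(0;H,(\a\cdot \o_H)^e)$ with controlled Mather discrepancy should lift to a divisor over $A$ computing $\mld(0;A,\a^e)$ with controlled $k_E$, and combined with the inductive hypothesis this would yield a pointwise bound. The uniformity in $\a^e$, so that the bound depends only on $N$ and $e$, should come from a Noetherianity and constructibility argument on the arc space: as $\a^e$ varies over multiideals with a fixed exponent $e$, the cylinders computing the $\mld$ ought to form a constructible family of bounded codimension, via upper semicontinuity of the relevant contact functions.

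The main obstacle is precisely this uniformity in $\a^e$. Without it one obtains only a bound depending on the particular multiideal, which is tautological. Even in characteristic $0$, a log resolution of $\a_1\cdots\a_s$ exists but its combinatorial complexity (numbers of blow-ups, discrepancies of exceptional divisors) can grow unboundedly as $\a^e$ varies in a flat family, so uniformity is a genuine finiteness statement rather than a routine consequence of resolution. This is why $M_{N,e}$ remains open in general, and it is the point at which the paper does not attempt to prove $M_{N,e}$ outright but instead establishes that, assuming its validity in $\mathcal T_\CC$, one can descend it via ``lifting to characteristic $0$'' to the corresponding class $\mathcal S_k$ of pairs in positive characteristic.
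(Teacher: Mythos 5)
The statement you were given is presented in the paper as a \emph{conjecture}, not a theorem: the paper explicitly says it does not discuss whether $M_{N,e}$ actually holds over $\CC$, and its main result (Theorem \ref{mainthm}) is only the conditional descent statement that \emph{if} $M_{N,e}$ holds for all pairs in characteristic $0$, then it holds for pairs in the restricted class $\mathcal M_k$ in characteristic $p$. So there is no proof in the paper to compare against, and your proposal does not supply one either; indeed your final paragraph concedes the decisive point. The uniformity of $\ell_{N,e}$ as $\a^e$ ranges over \emph{all} multiideals with exponent $e$ is exactly the content of the conjecture, and your sketch offers no mechanism to obtain it. The ``Noetherianity and constructibility'' argument you gesture at cannot close this gap: the collection of all multiideals with a fixed exponent is not a bounded family (it is not parametrized by any scheme of finite type), so semicontinuity of contact functions on the arc space yields no uniform codimension bound. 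Likewise, the proposed induction on $N$ via inversion of adjunction along a general hypersurface runs into the fact that precise inversion of adjunction for minimal log discrepancies is itself open; the lifting of a divisor computing $\mld(0;H,(\a\cdot\o_H)^e)$ to one computing $\mld(0;A,\a^e)$ with controlled $k_E$ is not available in general. This is why the conjecture is known only in low dimension and in special cases such as monomial ideals.

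What your proposal does get right, and what genuinely appears in the paper, is the jet-scheme reformulation: Proposition \ref{equiv} proves that $M_{N,e}$ is equivalent to the conjecture $D_{N,e}$, which asks for a bounded $m\in\ZZ_{\geq0}^s$ such that $s_m(0;A,\a^e)$ computes the minimal log discrepancy, using exactly the correspondence you describe between a divisorial valuation and a maximal divisorial set of codimension $k_E+1$ inside an intersection of contact loci. But in the paper this equivalence is a tool for the descent (the bounded $m$ is transported across a lifting $\ta\ (\mod p)=\a$ via the inequality $s_m(0;\AA_k^N,\a^e)\leq s_m(0;\AA_\CC^N,\ta^e)$ of Lemma \ref{localjet}), not a step toward proving the conjecture itself. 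In short, your text is a reasonable account of why the conjecture is plausible and how one might attack it, but it is not a proof, and none should be expected here: the statement is open over $\CC$ in general.
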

This conjecture is very useful, for example it guarantees lower semi continuity of the map
$$\AA_k^N\to \RR_{\geq0}\cup\{-\infty\};  \ \ x\mapsto \mld(x;\AA_k^N, \a^e),$$
and also stability of log canonicity under deformations,
which are not known in positive characteristic. 

Ascending chain condition (ACC) conjecture, which is important in birational geometry,
 is as follows:
\begin{conj}[$A_N$]\label{acc}
   Let $A$, $N$ and $0$   be as above. 
  For every fixed DCC set $J \subset \RR_{>0}$, the set
$$\{\mld(0; A, \a^e) \mid  e\subset J, (A, \a^e)\ \mbox{is\ log\ canonical\ at\ }
      0\}  $$
  satisfies ascending chain condition (ACC).     
\end{conj}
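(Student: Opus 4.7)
The natural approach within the framework of this paper is to reduce Conjecture $A_N$ in positive characteristic to its characteristic-$0$ analogue via the ``lifting to characteristic $0$'' method advertised in the introduction. Concretely, given a DCC set $J \subset \RR_{>0}$ and a sequence of pairs $(\AA_k^N, \a_n^{e_n})$ with $e_n \subset J$, all log canonical at $0$, I would for each $n$ invoke the inversion-of-reduction procedure to produce a pair $(\AA_\CC^N, \ta_n^{e_n})$ over $\CC$ whose exponent is still $e_n \subset J$, so that the lifted sequence lives in the characteristic-$0$ environment of the same DCC set to which Conjecture $A_N$ in characteristic $0$ (taken as the hypothesis being descended) applies.

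The plan then has two further steps. Step~2 is to establish the comparison
\[
   \mld(0; \AA_k^N, \a_n^{e_n}) = \mld(0; \AA_\CC^N, \ta_n^{e_n})
\]
for each $n$. I would approach this through contact loci on jet schemes: both sides are expressible via codimensions of $\cont^m(\a_n)$ and $\cont^m(\ta_n)$, and the lifting should be arranged so that the Mather--Jacobian invariants $\amj$ and $\mldmj$ on the two sides match. Step~3 is then immediate: the characteristic-$0$ ACC applied to the lifted sequence forces ACC on the values $\mld(0; \AA_\CC^N, \ta_n^{e_n})$, which by the comparison coincide with the original $\mld(0; \AA_k^N, \a_n^{e_n})$.

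\textbf{Main obstacle.} I expect Step~2 to be the sticking point. There is no a priori reason for a divisor computing mld in characteristic $p$ to lift to a divisor with the same log discrepancy over $\CC$: wild ramification and inseparable phenomena can produce positive-characteristic exceptional divisors whose ``counterparts'' over $\CC$ compute strictly different mld, and the resolution geometry itself can differ mod $p$. This forces the descent to be \emph{partial}, valid only over a class $\mathcal S_k$ in which the mld-computing divisor is controlled by invariants that are stable under lifting --- for instance, divisors with bounded $k_E$ of the type predicted by Musta\cedilla{t}\v{a}-Nakamura's conjecture, or those visible through the Mather--Jacobian framework. Identifying such an $\mathcal S_k$ that still contains the pairs of interest for ACC is, I expect, the central technical task, and is presumably why the paper concludes with a problem whose affirmative resolution would extend the descent to all pairs in positive characteristic.
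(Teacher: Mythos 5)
The statement you were asked about is Conjecture $A_N$ itself: the paper states it as a conjecture and does not prove it, so there is no ``paper's own proof'' to match. What the paper does prove are two weaker things: (a) an unconditional version for pairs with monomial ideals (Corollary \ref{monomial}), where the lifting is trivial because one can take $\ta_i$ generated by the same monomials and a common toric log resolution gives $\mld(0;\AA_k^N,\a^e)=\mld(0;\AA_\CC^N,\ta^e)$ directly; and (b) a conditional descent of $A_N$ to all pairs (the proposition following Problem \ref{problem}), valid only if Problem \ref{problem} is affirmatively solved. Your plan is essentially the paper's conditional argument, and you correctly isolate the sticking point: the paper's Problem \ref{problem} asks exactly for your Step 2 ingredient, namely a lift $(\we,\ta)$ of an arbitrary prime divisor $E$ and multiideal $\a$ with $k_E=k_{\we}$ and $\val_E\a_i=\val_{\we}\ta_i$. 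So as a proof your proposal has the same gap the paper openly leaves: without Problem \ref{problem} (or a restriction to toric approximations, i.e.\ to $\mathcal M_k$), Step 2 does not go through, and the conjecture remains open.

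Two points where your sketch diverges from, or under-specifies, the paper's actual mechanism. First, the equality $\mld(0;\AA_k^N,\a_n^{e_n})=\mld(0;\AA_\CC^N,\ta_n^{e_n})$ is obtained from two independent inequalities of different natures: the inequality $\mld_k\le\mld_\CC$ comes from the jet-scheme codimension comparison $s_m(0;\AA_k^N,\a^e)\le s_m(0;\AA_\CC^N,\ta^e)$ (Lemma \ref{localjet}, ultimately a height comparison for liftings of ideals via flatness over $\ZZ_{(p)}$), not from any matching of Mather--Jacobian invariants, which play no role in the paper; the reverse inequality $\mld_k\ge\mld_\CC$ comes from lifting a divisor that \emph{computes} $\mld_k$. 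Second, the existence of such a computing divisor in characteristic $p$ is itself not automatic (no resolution of singularities), and the paper has to manufacture it first by descending Musta\cedilla{t}\v{a}--Nakamura's conjecture (using Kawakita's equivalence of $A_N$ over $\CC$ with $M_{N,e}$ over $\CC$, then Proposition \ref{equiv} and the finiteness of $\{m:|m|\le\ell'_{N,e}\}$). Your proposal skips this step, but it is load-bearing: without it you have no divisor $E_i$ to lift in Step 2.
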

   Note that, in characteristic $0$, $M_{N,e}$ holds for any finite exponent $e$ if and only if $A_N$ holds 
 by \cite{mn} and \cite{kawk2}.

Another statement we would like to let descend from characteristic $0$ to positive
characteristic is the following which is proved by Shibata in characteristic $0$
 (for the proof, see  the appendix of this paper).
\begin{prop}\label{shibata2}
    Let $\m$ be the maximal ideal defining the origin $0\in \AA_\CC^N$.
    Then, for  a positive integer $\mu$,
    there is a positive integer $L_{N,\mu}$ depending only on $N$ and $\mu$ such that for every 
    ${\m}$-primary ideal $\a$ on $\AA_\CC^N$ with 
       ${\mathfrak m}^\mu\subset \a$, 
    there is some prime divisor $E$ over $\AA_\CC^N$ that computes $\mathrm{lct}(0;\AA_\CC^N,\a)$  
    and satisfies  $k_E\le L_{N,\mu}$.
\end{prop}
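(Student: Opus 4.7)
The plan is to parameterize the relevant ideals by a bounded family over $\CC$ and invoke a stratification argument.  First, any $\m$-primary ideal $\a$ with $\m^\mu\subset\a$ is determined by its image $\a/\m^\mu$ in the finite-dimensional ring $R:=\CC[x_1,\dots,x_N]/\m^\mu$, and the condition on a subspace of $R$ to be an ideal is closed.  Hence such $\a$ are parameterized by a closed subscheme $T$ of the finite disjoint union $\bigsqcup_d \mathrm{Grass}(d,R)$; in particular $T$ is of finite type over $\CC$ and carries a universal ideal $\mathfrak{A}$ on $\AA^N_\CC\times T$.

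Next, I would stratify $T$ and produce, on each stratum, a divisor computing $\lct$ uniformly.  Applying Hironaka's theorem in characteristic zero to $(\AA^N\times T,\mathfrak{A})$ and spreading out by noetherian induction, one obtains a decomposition $T=\bigsqcup_{i=1}^r T_i$ into locally closed subschemes together with proper birational $T_i$-morphisms $\pi_i\colon Y_i\to\AA^N\times T_i$ that log-resolve $(\AA^N,\a_t)$ fiberwise for every $t\in T_i$.  Refining along the constructible function $t\mapsto\lct(0;\AA^N,\a_t)$, we may assume this function equals some constant $c_i$ on $T_i$.  On $T_i$, the pullback $\mathfrak{A}\cdot\o_{Y_i}$ is a locally principal sheaf of the form $\o_{Y_i}(-\sum_j a_jF_j)$ with constant integer coefficients $a_j$, and each prime exceptional $F_j$ has a log discrepancy $k_j$ over $\AA^N$ that is likewise constant in the family.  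Consequently the ratio $(k_j+1)/a_j$ is fiberwise independent of $t\in T_i$, so any index $j_0$ attaining $\min_j(k_j+1)/a_j=c_i$ yields a prime divisor $F_{j_0}$ that computes $\lct(0;\AA^N,\a_t)$ for every $t\in T_i$.

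Finally, since $k_{j_0}$ depends only on the stratum $T_i$, setting $L_{N,\mu}$ to be the maximum of these discrepancies over the finitely many strata gives the desired uniform bound.  The main obstacle lies in the second step: constructing the simultaneous log resolution in families and checking that the pair $(a_j,k_j)$ is constant on fibers.  Both rely essentially on Hironaka's functorial resolution in characteristic zero together with standard flatness arguments; neither is available in positive characteristic, which accounts for the characteristic-zero hypothesis in the proposition.
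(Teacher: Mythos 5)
Your argument is correct but takes a genuinely different route from the paper's. The paper's proof (appendix, due to Shibata) argues by contradiction via generic limits of ideals in the sense of de Fernex--Ein--Musta\cedilla{t}\v{a}: assuming no uniform bound, it takes a sequence $\a_i$ with minimal computing discrepancy diverging, passes to the generic limit $\a$ over a field extension $K$, notes $\m_K^\mu\subset\a\subset\m_K$ so that $\a$ is $\m_K$-primary, and then uses the transfer lemma for generic limits to pull a divisor computing $\lct$ of $\a$ back to divisors of equal discrepancy computing $\lct$ of infinitely many $\a_i$, contradicting the divergence. You instead run a direct bounded-family argument: parameterize all such $\a$ by a finite-type scheme $T$ sitting inside a finite union of Grassmannians of the Artinian ring $\CC[x]/\m^\mu$, stratify $T$ by noetherian induction into locally closed pieces admitting simultaneous fiberwise log resolutions with constant numerical data $(a_j,k_j)$, and take the maximum of the relevant $k_j$ over the finitely many strata. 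Both approaches exploit the same underlying boundedness (the Noetherianity of the parameter space), but yours is constructive and makes the source of the bound explicit, whereas the paper's is shorter once the generic-limit machinery is accepted; indeed, the generic limit is essentially the generic point of a stratum in your decomposition. The assertion you flag as the technical crux---that the $(a_j,k_j)$ can be made constant on strata after refinement, via Hironaka plus spreading out---is precisely the content the paper delegates to its generic-limit transfer proposition; it is standard in characteristic zero but is where all the real work lives, and your noetherian-induction sketch is the right way to supply it.
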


Idealistically, these three statements  would descend
from a set  of pairs $(\AA_\CC^N, \ta^e)$ to the whole set of pairs $(\AA_k^N,\a^e)$
for a field $k$ of positive characteristic.
However, what we obtain in this paper are for some restricted set of pairs $(\AA_k^N,\a^e)$.
Now we give the restriction for pairs.

\begin{defn} Let $k$ be an algebraically closed field of arbitrary characteristic.
  For a pair $(\AA_k^N,\a^e)$, we say that the minimal log discrepancy 
  $\mld(0; \AA_k^N, \a^e)$ {\sl has a toric approximation} if
  either
  \begin{enumerate}
    \item[] for every $\epsilon>0$, there exists a toric prime divisor $E_\epsilon$ over $\AA_k^N$
    such that 
    $$| a(E_\epsilon; \AA_k^N,\a^e)-\mld(0; \AA_k^N,\a^e) | <\epsilon,$$
    \item[]or  there exists a toric prime divisor over $\AA_k^N$ computing $\mld(0; \AA_k^N, \a^e)$.

  \end{enumerate}
  Here, a toric prime divisor means a prime divisor over $\AA_k^N$ such that the corresponding
  valuation is a discrete monomial valuation (see  Definition \ref{toric} for details).
\end{defn}

\begin{defn}  Let $k$ be an algebraically closed field of arbitrary characteristic.
  Let $\a$ be a coherent ideal sheaf on $\AA_k^N$.
  For a pair $(\AA_k^N,\a)$, we say that the log canonical threshold
  $\lct (0; \AA_k^N,\a) $ {\sl has a toric approximation} if 
    for every $\epsilon>0$
    there exists  a toric prime divisor $E_\epsilon$ over $\AA_k^N$
    such that 
   $$\left| \frac{k_{E_\epsilon}+1}{\val_{E_\epsilon}\a} - \lct(0; \AA_k^N, \a)\right| <\epsilon.$$

\end{defn}

\begin{defn} For an algebraically closed field $k$, 
   we define the sets $\mathcal M_k$ and $\mathcal L_k$  of pairs as follows:
\begin{enumerate}
\item[]
   $\mathcal M_k=\{(\AA_k^N, \a^e) \mid N\geq 1, e\subset \RR_{>0},
   \mld(0; \AA_k^N, \a^e)\ \mbox{has\ a\ toric\ approximation}\},$
\item[]   
   $\mathcal L_k=\{(\AA_k^N, \a) \mid N\geq 1,
   \lct(0; \AA_k^N, \a^e)\ \mbox{ has\ a \ toric\ approximation}\}.$
\end{enumerate} 
\end{defn}

The results of this paper are the following:

\begin{thm}\label{mainthm} Let $k$ be an algebraically closed field of characteristic $p>0$.
  Musta\cedilla{t}\v{a}-Nakamura's conjecture  for all pairs in characteristic
  $0$   
  descend to $\mathcal M_k$.
  And in this case there is a  prime divisor over $\AA_k^N$ 
  computing $\mld(0; \AA_k^N,\a^e)$. 
\end{thm}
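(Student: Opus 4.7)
The plan is to use the ``lifting to characteristic $0$'' procedure to replace the given pair $(\AA_k^N, \a^e) \in \mathcal{M}_k$ by a companion pair $(\AA_\CC^N, \ta^e)$ in characteristic $0$, to apply Musta\cedilla{t}\v{a}-Nakamura's conjecture to that companion in order to produce a distinguished prime divisor, and finally to descend back to characteristic $p$ via the inverse ``modulo $p$ reduction'' applied to that divisor, checking along the way that neither the bound $k_E \leq \ell_{N,e}$ nor the property of computing $\mld$ is lost in the round trip.

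First I would fix a lift $(\AA_\CC^N, \ta^e)$ of $(\AA_k^N, \a^e)$ by choosing generators of each ideal $\a_i$ and lifting their coefficients while preserving the supports of monomials. The crucial combinatorial fact to use is that for every toric (discrete monomial) valuation $v_w$ of weight $w \in \ZZ_{>0}^N$, both $k_{E_w} = w_1+\cdots+w_N$ and $\val_{E_w}(\a_i) = \min\{w \cdot \alpha : x^\alpha \in \supp(\a_i)\}$ depend only on $w$ and on the monomial supports, and are therefore characteristic-independent. Consequently $a(E_w; \AA_k^N, \a^e) = a(E_w; \AA_\CC^N, \ta^e)$ for every toric divisor $E_w$, and combining this with the toric-approximation hypothesis defining $\mathcal{M}_k$ (passing to a limit in the approximation case) yields
\[
\mld(0; \AA_k^N, \a^e) \;=\; \inf_{w} a(E_w; \AA_\CC^N, \ta^e) \;\geq\; \mld(0; \AA_\CC^N, \ta^e).
\]

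By Musta\cedilla{t}\v{a}-Nakamura's conjecture applied to $(\AA_\CC^N, \ta^e)$, there is a prime divisor $\widetilde E$ over $\AA_\CC^N$ computing $\mld(0; \AA_\CC^N, \ta^e)$ and satisfying $k_{\widetilde E} \leq \ell_{N,e}$. I would then apply the ``modulo $p$ reduction'' inverse to the lifting to produce from $\widetilde E$ a prime divisor $E$ over $\AA_k^N$ with $k_E = k_{\widetilde E} \leq \ell_{N,e}$, verifying that the reduction only increases valuations on the lifted ideals, i.e.\ $\val_E(\a_i) \geq \val_{\widetilde E}(\ta_i)$ (specialising mod $p$ can only create extra vanishing through coefficient cancellation). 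This gives
\[
a(E; \AA_k^N, \a^e) \;\leq\; a(\widetilde E; \AA_\CC^N, \ta^e) \;=\; \mld(0; \AA_\CC^N, \ta^e) \;\leq\; \mld(0; \AA_k^N, \a^e),
\]
and combined with the tautological inequality $a(E;\AA_k^N,\a^e) \geq \mld(0;\AA_k^N,\a^e)$ this forces equality, so $E$ is the required prime divisor.

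The hard part is setting up the two correspondences---the ``lifting'' of ideals to characteristic $0$ and the ``modulo $p$ reduction'' of a prime divisor with bounded $k_E$---and proving their compatibility with log discrepancies: exact preservation of $a(-)$ for toric divisors under the lift, and the monotonicity $\val_E(\a_i) \geq \val_{\widetilde E}(\ta_i)$ under the reduction. A subsidiary technical issue is that the definition of $\mathcal{M}_k$ permits the mld in characteristic $p$ to be merely the limit, rather than a value, of toric discrepancies; this forces an $\epsilon$-approximation argument in the first displayed inequality but should cause no serious difficulty once the toric part has been settled.
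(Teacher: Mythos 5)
Your opening move -- choosing a single support-preserving lift $\ta^e$ of $\a^e$ so that every toric divisor $E_w$ has $a(E_w;\AA_k^N,\a^e)=a(\we_w;\AA_\CC^N,\ta^e)$, and then deducing $\mld(0;\AA_\CC^N,\ta^e)\le\mld(0;\AA_k^N,\a^e)$ from the toric-approximation hypothesis -- is sound, and in fact a mild simplification of the paper's Lemma~\ref{keylemma}, which re-chooses the lift $\ta_\epsilon$ for each toric divisor $E_\epsilon$ (by discarding degree $<d$ terms, a step that is actually vacuous once the lift is chosen to preserve monomial supports). So the lifting half of your round trip is in order.

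The gap is the descent half. You propose to ``apply the modulo $p$ reduction inverse to the lifting to produce from $\widetilde E$ a prime divisor $E$ over $\AA_k^N$ with $k_E=k_{\widetilde E}$'' and valuation monotonicity. No such operation is established in the paper, and it is not a formal consequence of the lifting construction: $\widetilde E$ lives on some birational model of $\AA_\CC^N$, and spreading that model out over a finitely generated $\ZZ$-algebra need not give a fiber over the specific prime $p$ with residue field inside $k$; even when it does, Remark~\ref{remideal} warns that primality and heights are not preserved under reduction, so neither the irreducibility of the special fiber of $\widetilde E$ nor the equality $k_E=k_{\widetilde E}$ comes for free. Indeed, transferring a divisor between characteristics is precisely the difficulty the paper is organized to avoid, and even the reverse direction (char $p$ to char $0$) is, for non-toric divisors, the open Problem~\ref{problem}. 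The paper's route is to never touch $\widetilde E$ directly: Proposition~\ref{equiv} trades the divisor $\widetilde E$ with $k_{\widetilde E}\le\ell_{N,e}$ for a bounded multi-index $m$ with $s_m(0;\AA_\CC^N,\ta^e)=\mld(0;\AA_\CC^N,\ta^e)$; Lemma~\ref{localjet}, whose substance is the height inequality of Proposition~\ref{polynomial1}(ii) applied to the defining ideals $I_m(\ta)\ (\mod p)=I_m(\a)$ of truncated jet schemes, gives $s_m(0;\AA_k^N,\a^e)\le s_m(0;\AA_\CC^N,\ta^e)$ for the \emph{same} $m$; and Proposition~\ref{equiv} is then used again in characteristic $p$ to convert the bounded $m$ into a bounded prime divisor over $\AA_k^N$. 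That jet-theoretic intermediary is exactly the step your argument is missing, and without it the ``modulo $p$ reduction of $\widetilde E$'' remains an unjustified assertion.
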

   Here, we note that  existence of a prime divisor computing the minimal log discrepancy
   is not known in general in positive characteristic, 
   since  existence of resolutions of singularities is not known.
   
   As Musta\cedilla{t}\v{a}-Nakamura's conjecture holds for any pair $(\AA_\CC^2, \a^e)$,
   we obtain the following:
   
\begin{cor} Let $k$ be an algebraically closed field of characteristic $p>0$.
   Musta\cedilla{t}\v{a}-Nakamura's conjecture  holds  for pairs $(\AA_k^2,\a^e)$ in 
   $\mathcal   M_k$.
\end{cor}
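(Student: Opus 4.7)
The plan is to obtain this corollary as an essentially immediate specialization of Theorem~\ref{mainthm}, with the only extra ingredient being the characteristic zero hypothesis in dimension two. My strategy therefore has two short steps.

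First, I would record the characteristic zero input: Musta\cedilla{t}\v{a}--Nakamura's conjecture $M_{2,e}$ holds over $\CC$ for every finite exponent $e\subset \RR_{>0}$. As already noted in the introduction, in characteristic zero $M_{N,e}$ for all $e$ is equivalent to the ACC conjecture $A_N$ via \cite{mn} and \cite{kawk2}, and for $N=2$ the latter is a classical theorem on the smooth surface $\AA_\CC^2$. Alternatively, the original paper of Musta\cedilla{t}\v{a} and Nakamura establishes $M_{2,e}$ directly. Either way, the hypothesis of Theorem~\ref{mainthm} is satisfied for $N=2$.

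Next, I would apply Theorem~\ref{mainthm} with $N=2$: since Musta\cedilla{t}\v{a}--Nakamura's conjecture for all pairs $(\AA_\CC^2,\ta^e)$ is known in characteristic zero, the descent supplied by the theorem produces, for every pair $(\AA_k^2, \a^e) \in \mathcal M_k$, a positive integer $\ell_{2,e}$ (depending only on $e$) and a prime divisor $E$ over $\AA_k^2$ satisfying $k_E \leq \ell_{2,e}$ that computes $\mld(0; \AA_k^2, \a^e)$. This is exactly the content of the corollary, including the nontrivial existence of a computing divisor, which Theorem~\ref{mainthm} provides as part of its conclusion.

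There is no serious obstacle beyond Theorem~\ref{mainthm} itself; the dimension two statement in characteristic zero is an established result, so the entire nontrivial content of the corollary is already packaged inside the descent given by the main theorem. The only thing to verify at the level of this corollary is that the characteristic zero hypothesis of Theorem~\ref{mainthm} is available for $N=2$, which it is.
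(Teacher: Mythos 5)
Your proposal is exactly the paper's route: the paper derives the corollary in one sentence by noting that Musta\cedilla{t}\v{a}--Nakamura's conjecture is known for pairs $(\AA_\CC^2,\a^e)$ and then invoking Theorem~\ref{mainthm} to descend to $\mathcal M_k$. Your slightly more detailed justification of the characteristic zero input for $N=2$ is a correct elaboration, not a different approach.
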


By the fact that 
 Musta\cedilla{t}\v{a}-Nakamura's conjecture holds for any pair $(\AA_\CC^N, \a^e)$,
such that $\a_i$'s are all monomial ideals and  by some additional discussions
we obtain the following:

\begin{cor}\label{monomial}  For an algebraically closed field of characteristic $p>0$,
    in the set of pairs $(\AA_k^N, \a^e)$
    such that $\a_i$'s are all monomial ideals, Musta\cedilla{t}\v{a}-Nakamura's conjecture
    and ACC conjecture hold.
\end{cor}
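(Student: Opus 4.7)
The plan is threefold: first show that every monomial pair lies in $\mathcal{M}_k$, next invoke Theorem~\ref{mainthm} to conclude Mustata-Nakamura's conjecture in positive characteristic, and finally reduce ACC to the combinatorial invariance of $\mld$ for monomial pairs.

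For the first step, assume each $\a_i \subset k[x_1,\ldots,x_N]$ is monomial. For any prime divisor $E$ over $\AA_k^N$ with center at the origin, set $w_j := v_E(x_j) \in \ZZ_{>0}$ and let $E^{\mathrm{mon}}_w$ denote the toric prime divisor corresponding to the monomial valuation of weights $w = (w_1,\ldots,w_N)$. Since each $\a_i$ is monomial, $v_E(\a_i) = v^{\mathrm{mon}}_w(\a_i) = \min_{\alpha \in \mathrm{Newt}(\a_i)} \langle w, \alpha \rangle$. An Izumi-type inequality $k_E + 1 \ge \sum_j w_j = k_{E^{\mathrm{mon}}_w} + 1$, valid in any characteristic over a smooth ambient space, then yields $a(E^{\mathrm{mon}}_w; \AA_k^N, \a^e) \le a(E; \AA_k^N, \a^e)$. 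Hence $\mld(0; \AA_k^N, \a^e)$ is in fact realized by a toric prime divisor, and the pair automatically belongs to $\mathcal{M}_k$.

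Since MN holds in characteristic $0$ for every pair with monomial component ideals, Theorem~\ref{mainthm} applied in the second step immediately produces MN for monomial pairs in characteristic $p$, together with an actual prime divisor $E$ computing $\mld(0; \AA_k^N, \a^e)$ and satisfying $k_E \le \ell_{N,e}$. For the ACC statement, the combinatorial description coming from step one,
$$\mld(0; \AA_k^N, \a^e) \,=\, \min_{w \in \ZZ_{>0}^N} \Bigl( \sum_{j=1}^N w_j \,-\, \sum_{i=1}^s e_i\, \min_{\alpha \in \mathrm{Newt}(\a_i)} \langle w, \alpha \rangle \Bigr),$$
depends only on the Newton polyhedra of the $\a_i$ and the exponents $e$, not on the base field. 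Identifying each $\a_i$ with the monomial ideal $\ta_i \subset \CC[x_1,\ldots,x_N]$ having the same exponent vectors yields $\mld(0; \AA_k^N, \a^e) = \mld(0; \AA_\CC^N, \ta^e)$, so the set of $\mld$ values arising from monomial pairs in characteristic $p$ with exponents in a fixed DCC set $J$ coincides with its characteristic zero counterpart. The classical equivalence of MN and ACC in characteristic $0$ (from \cite{mn, kawk2}) applied on the right-hand side then transfers ACC to characteristic $p$ for monomial pairs.

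The main obstacle is the Izumi-type inequality $k_E + 1 \ge \sum_j v_E(x_j)$ used in the first step, since one cannot invoke a log resolution in positive characteristic. The natural remedy is the characteristic-free arc-space description of log discrepancy: $k_E + 1$ equals the codimension of a suitable contact cylinder in $J_\infty(\AA_k^N)$, and comparing this cylinder with $\cont^{\geq w_j}(x_j)$ for each $j$ forces the desired bound by a direct dimension count.
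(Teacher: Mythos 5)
Your proof is correct, and it reaches the two key facts the paper needs --- that every monomial pair lies in $\mathcal M_k$ and that $\mld(0;\AA_k^N,\a^e)=\mld(0;\AA_\CC^N,\ta^e)$ --- by a genuinely different route in the first step. The paper invokes Howald's toric log resolutions in both characteristics, observes that the identical Newton polyhedra allow a common fan, and reads off a toric divisor computing the mld together with the equality of discrepancies and valuations. You instead dominate an \emph{arbitrary} prime divisor $E$ with center $0$ by its associated monomial valuation: since $\a_i$ is monomial, $\val_E(\a_i)$ depends only on $w_j=\val_E(x_j)$, and the inequality $k_E+1\ge\sum_j w_j$ forces $a(E^{\mathrm{mon}}_w)\le a(E)$. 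Your concern about this inequality in positive characteristic is correctly resolved exactly as you suggest: $k_E+1=\codim(C_A(\val_E),A_\infty)$ holds in arbitrary characteristic by the Ishii--Reguera result the paper already uses, and $C_A(\val_E)\subset\bigcap_j\cont^{\ge w_j}(x_j)$, whose codimension is $\sum_j w_j$, gives the bound. Your approach buys a resolution-free argument and the stronger structural fact that toric valuations dominate \emph{all} divisorial valuations on monomial pairs (which also yields the characteristic-free combinatorial formula for the mld directly); the paper's approach is shorter given Howald's theorem, and delivers attainment of the minimum by a divisor on an explicit toric model without having to worry about whether the infimum over $w\in\ZZ_{>0}^N$ is achieved. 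Two small points to tighten: when $\mld\ge 0$ you should either justify that the infimum of your piecewise-linear function over $\ZZ_{>0}^N$ is attained or fall back on the ``toric approximation'' clause of the definition of $\mathcal M_k$, which your inequality already supplies; and for ACC you (like the paper) ultimately rely on ACC holding for monomial pairs over $\CC$, which should be cited as a known characteristic-zero input rather than derived from the MN--ACC equivalence for the full class of pairs.
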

  About log canonical threshold we obtain the following:
\begin{thm}\label{lct}
     Proposition \ref{shibata2} holds in $\mathcal L_k$ for an algebraically closed 
   field of positive characteristic.
   I.e., let $k$ be an algebraically closed field of positive characteristic.
     Let $\m$ be the maximal ideal defining the origin $0\in \AA_k^N$.
    Then, for  a positive integer $\mu$,
    there is a positive integer $L_{N,\mu}$ depending only on $N$ and $\mu$ such that for every 
    ${\m}$-primary ideal $\a$ such that  
       ${\m}^\mu\subset \a$ and $(\AA_k^N,\a)\in \mathcal L_k$, 
    there is some prime divisor $E$ over $\AA_k^N$ that computes $\mathrm{lct}(0;\AA_k^N,\a)$  
    and satisfies  $k_E\le L_{N,\mu}$.
\end{thm}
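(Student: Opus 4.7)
The plan is to follow the strategy of Theorem~\ref{mainthm}, adapted to the lct: lift the pair to characteristic~$0$, apply Proposition~\ref{shibata2} there, and transport the resulting divisor back using the toric approximation hypothesis. Given an $\m$-primary ideal $\a\subset k[x_1,\ldots,x_N]$ with $\m^\mu\subset\a$ and $(\AA_k^N,\a)\in\mathcal L_k$, the first step is to apply the paper's ``lifting to characteristic~$0$'' construction and obtain an ideal $\ta\subset\CC[x_1,\ldots,x_N]$ sharing the same monomial supports as $\a$; in particular $\tm^\mu\subset\ta$. Because a toric valuation with weight $w\in\ZZ_{>0}^N$ depends only on the Newton polyhedron of the ideal, one has $\val_w(\ta)=\val_w(\a)$ for every such $w$.

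Combining this agreement with the toric approximation hypothesis yields
\[
\lct(0;\AA_\CC^N,\ta)\;\le\;\inf_{w}\frac{|w|}{\val_w(\ta)}\;=\;\inf_{w}\frac{|w|}{\val_w(\a)}\;=\;\lct(0;\AA_k^N,\a),
\]
while the reverse inequality is the standard fact that singularities cannot improve under reduction modulo $p$. Hence the two log canonical thresholds coincide. Proposition~\ref{shibata2} applied to $(\AA_\CC^N,\ta)$ then produces a prime divisor $\we$ over $\AA_\CC^N$ computing $\lct(0;\AA_\CC^N,\ta)$ with $k_{\we}\le L_{N,\mu}$.

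The last and most delicate step is to exhibit a corresponding divisor over $\AA_k^N$ with the same bound. The constraint $\m^\mu\subset\a$ gives $\val_w(\a)\le\mu\min_i w_i$, so the toric-approximation infimum is a ratio of linear forms in $w$; standard linear-programming arguments show that it is attained on some face of the Newton polyhedron of $\a$, and the equality $\lct(0;\AA_\CC^N,\ta)=\frac{k_{\we}+1}{\val_{\we}(\ta)}$ together with $k_{\we}\le L_{N,\mu}$ forces the minimizing weight $w^\ast$ to satisfy $|w^\ast|\le L_{N,\mu}+1$. The toric divisor $E$ over $\AA_k^N$ associated to $w^\ast$ then has $k_E=|w^\ast|-1\le L_{N,\mu}$ and computes $\lct(0;\AA_k^N,\a)$. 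The main obstacle is precisely this last step: upgrading the toric approximation (a sequence of divisors) to a genuine minimizer with the sharp bound $|w^\ast|\le L_{N,\mu}+1$, using the characteristic-$0$ data carried by $\we$. Without the hypothesis $(\AA_k^N,\a)\in\mathcal L_k$ there is no obvious bridge from $\we$ to a divisor in characteristic $p$, which is why the statement is restricted to $\mathcal L_k$.
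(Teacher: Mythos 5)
Your first few steps are essentially sound, but the final step contains a gap that you yourself flag without resolving. After producing a lift $\ta$ with the same monomial support as $\a$ (so that $\val_w(\ta)=\val_w(\a)$ for every monomial weight $w$) and establishing $\lct(0;\AA_\CC^N,\ta)=\lct(0;\AA_k^N,\a)=c$, you invoke Proposition \ref{shibata2} to get a prime divisor $\we$ over $\AA_\CC^N$ with $k_\we\le L_{N,\mu}$ that computes $c$. But $\we$ need not be toric, and there is no logical bridge from the bound on $k_\we$ to the claimed bound $|w^*|\le L_{N,\mu}+1$ on a toric minimizing weight $w^*$: the divisor $\we$ and the toric divisor attached to $w^*$ are a priori unrelated, and nothing in Howald's theorem or linear programming ties their discrepancies together. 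The sentence ``the equality $\lct(0;\AA_\CC^N,\ta)=\frac{k_\we+1}{\val_\we(\ta)}$ together with $k_\we\le L_{N,\mu}$ forces $|w^*|\le L_{N,\mu}+1$'' is exactly the missing argument, not a consequence of what precedes it.

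The paper sidesteps this by never trying to produce a toric divisor at the end. It uses the jet-scheme quantity $z_m$ as the characteristic-agnostic carrier of the bound: Lemma \ref{zm-ke} converts $k_\we\le L_{N,\mu}$ in characteristic $0$ into $m_\epsilon\le L'_{N,\mu}$ with $z_{m_\epsilon}(0;\AA_\CC^N,\ta_\epsilon)=\lct$; the height inequality from Proposition \ref{polynomial1}(ii) (as in Lemma \ref{localjet}) gives $z_{m_\epsilon}(0;\AA_\CC^N,\ta_\epsilon)\ge z_{m_\epsilon}(0;\AA_k^N,\a)\ge c$, which pins down $z_m(0;\AA_k^N,\a)=c$ for a bounded $m$; and Lemma \ref{zm-ke} is applied a second time, now in characteristic $p$, to turn the $m$-bound back into a $k_E$-bound. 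The divisor over $\AA_k^N$ so obtained comes from an irreducible component of a contact locus and is in general not toric, which is precisely why one never needs to control $|w^*|$. Incidentally, your monomial-support-preserving lift and the identity $\lct(0;\AA_k^N,\a)=\inf_w|w|/\val_w(\a)$ for pairs in $\mathcal L_k$ do suggest a genuinely more elementary route: by Howald's formula (valid in any characteristic), this infimum equals $\lct(0;\AA_k^N,P(\a))$ for the monomial ideal $P(\a)$ with the same Newton polyhedron, and the finiteness of monomial ideals pinched between $\m^\mu$ and $\m$ gives a uniform bound on the computing weight. But that bound comes from the combinatorics of Newton polyhedra and not from $\we$, so it is a different proof; if you want to reach the conclusion via Proposition \ref{shibata2} as the paper does, the $z_m$ intermediary is the correct tool.
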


As a corollary, we obtain the following:

\begin{cor}\label{zhu}
   Let $\m$ be as above.
   Then for a pair $(\AA_k^N, \a)\in \mathcal L_k$ with $\m$-primary ideal $\a$
   there exists a prime divisor over $\AA_k^N$ computing  $\lct(0;\AA_k^N, \a)$.

\end{cor}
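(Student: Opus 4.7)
The plan is to derive this corollary directly from Theorem \ref{lct}. The point is that Theorem \ref{lct} already asserts, in its conclusion, the existence of a prime divisor $E$ over $\AA_k^N$ that computes $\lct(0;\AA_k^N,\a)$, and not merely a bound on some abstract computing divisor; what one needs in order to pass from the theorem to the corollary is simply to produce an admissible exponent $\mu$ for the given $\a$.

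First I would observe that, since $\a$ is $\m$-primary, the radical $\sqrt{\a}$ equals $\m$, and because the local ring at $0$ is Noetherian there exists a positive integer $\mu$ with $\m^\mu \subset \a$. Then I would apply Theorem \ref{lct} to this $\mu$ and to the pair $(\AA_k^N,\a)$, which by hypothesis lies in $\mathcal L_k$ and hence satisfies the standing assumption of the theorem. The conclusion of Theorem \ref{lct} yields a prime divisor $E$ over $\AA_k^N$ computing $\lct(0;\AA_k^N,\a)$, with the quantitative bound $k_E \le L_{N,\mu}$; the bound itself is not needed for the corollary, only the existence.

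There is no substantive obstacle here: the only nontrivial input is Theorem \ref{lct} itself, whose proof carries the genuine content (the descent from characteristic $0$ via toric approximation and the uniform Shibata-type bound). The purpose of stating Corollary \ref{zhu} separately is precisely to emphasize that, while in positive characteristic the existence of a prime divisor computing $\lct$ is not available in general (resolutions of singularities being unknown), the toric approximation hypothesis encoded in membership of $\mathcal L_k$ upgrades an approximating sequence of toric valuations into an honest computing divisor, by way of the uniform bound $L_{N,\mu}$.
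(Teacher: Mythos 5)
Your argument is exactly the paper's own proof: since $\a$ is $\m$-primary there is a $\mu$ with $\m^\mu\subset\a$, and Theorem \ref{lct} then supplies the computing divisor. Correct, same approach.
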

  Note that in positive characteristic case, existence of prime divisors computing the log
  canonical threshold is not known in general, as existence of resolutions of singularities is not
  known.

This paper is organized as follows:
In the second section, we establish the basic notions of lifting of polynomials 
to characteristic $0$.
In the third section, we interpret Musta\cedilla{t}\v{a}-Nakamura's conjectures into
a conjecture in terms of jet schemes.
In the forth section, as applications of lifting to characteristic $0$, we prove the  theorems
and also pose a problem whose affirmative answer would give descent of the statements to
the whole set of the pairs of positive characteristic. 
We also show some applications of the affirmative answer of the problem.
In the appendix, the proof of Proposition \ref{shibata2} for characteristic $0$ 
is given  by Kohsuke Shibata.
Our result on log canonical threshold is based on this.

\vskip.5truecm
\noindent
{\bf Acknowledgement.} The author would like to express her hearty thanks to 
Kazuhiko Kurano for simplifying the proof of Proposition \ref{kurano} and Kohsuke Shibata
for helpful comments on the preliminary version of this paper  and 
for providing with the proof of Proposition \ref{shibata2} for characteristic 0
(his proof is included in the appendix of this paper).
She is also grateful to Mircea Musta\cedilla{t}\v{a} for his  comments and
an application of the  descent to whole the set of pairs under the assumption
that Problem \ref{problem} is affirmatively solved
(it is Proposition \ref{mustata}.)
The author also thanks the members of Singularities Seminar at Nihon University for the constant 
encouragements. 

\vskip1truecm
\section{Preliminaries on lifting to characteristic $0$}
\noindent
In this section, we do not assume the algebraic closedness of the base fields.

We introduce a method ``lifting to characteristic $0$" which constructs  objects in 
characteristic $0$ from objects in positive characteristic.

\begin{defn}\label{defofmod0}
  Let $S$ be an integral domain of characteristic 0, i.e., the canonical  homomorphism 
  $\ZZ\to S$ of rings is injective.
  For a prime number $p\in \ZZ$, we denote the canonical projection 
  by $\Phi_p:S\to S\otimes_\ZZ \ZZ/(p)$.
 
\begin{enumerate}
\item (Basic case) For $\tilde f\in S$ and $f\in S\otimes_\ZZ \ZZ/(p)$,
   we say that $\tilde f$ is a {\sl lifting to characteristic $0$ (or just a lifting) } of $f$,
   if $\Phi_p(\tilde f)=f$.
   In this case we also write $\tilde f (\mod p)=f$.
   
\item Let ${\bf f}=\{f_1,f_2,\ldots, f_r\} $ be a set of elements of an integral domain $R$
       of characteristic $p>0$.
       Let  ${\bf \tilde f}=\{\tilde f_1,\tilde f_2,\ldots, \tilde f_r\} $ be a set of elements of an
       integral domain $\widetilde R$ 
       of characteristic 0.
       
       We say that ${\bf \tilde f}$ is a {\sl lifting to characteristic $0$ (or just a lifting)} of 
       ${\bf f}$ and
       write ${\bf \tilde f}(\mod p)={\bf f}$, if the following holds:
 \begin{enumerate}
 \item there exists a subring $S\subset \widetilde R$ and an injective
             homomorphism $\iota:S\otimes_\ZZ \ZZ/(p)\hookrightarrow R$ of rings;
 \item  Identify the ring  $S\otimes_\ZZ \ZZ/(p)$ and its image by the injection 
             $\iota$. 
 The inclusions ${\bf \tilde f}\subset S$ and ${\bf f}\subset S\otimes_\ZZ \ZZ/(p)$ hold 
            with the following relations:
                       $$\tilde f_i (\mod p)=f_i\ \ \mbox{for\ every}\ \ i=1,2,\ldots, r.$$ 
  \end{enumerate}     
  In this case, we call $S$ a {\sl subring supporting } the lifting  ${\bf \tilde f}(\mod p)={\bf f}$.

\item Let $\a$ be an ideal of an integral domain $R$ of characteristic $p>0$.
           Let $\widetilde\a$ be an ideal of an integral domain $\widetilde R$ of characteristic
           0.
           We say that $\widetilde\a$ is a {\sl lifting to characteristic $0$ (or just a lifting)} of $\a$ 
           and write 
           $$\widetilde \a(\mod p)=\a$$ 
           if there exist systems of generators 
           ${\bf f}=\{f_1,\ldots,f_r\} $ of $\a$ and $\tilde{\bf f}=\{\tf_1,\ldots,\tf_r\}$ of 
           $\ta$,
           respectively, such that 
           $${\bf \tilde f}(\mod p)={\bf f}.$$ 
           If $S$ is a subring supporting the lifting ${\bf \tilde f}(\mod p)={\bf f}$, 
           we also call $S$ a subring supporting the lifting $\widetilde \a(\mod p)=\a$.

\end{enumerate}

\end{defn}

\begin{rem}\label{remideal}
\begin{enumerate}
\item
 Note that a lifting of an ideal is not unique and the height of the ideal is not preserved by a lifting. 
We also should note that a lifting of a prime ideal is not necessarily prime.
\item
 When we define $\Phi_p:S\to \sp$
           as the canonical projection,
           it is obvious that the inclusion
$\Phi_p(\widetilde\a\cap S)\supset\a\cap (\sp)$ holds.
In general the equality does not hold.


 \end{enumerate}
\end{rem}

      we obtain the following lifting to characteristic $0$.

\begin{prop}\label{kurano}
       Let $\alpha=\{\alpha_1,\alpha_2,\ldots,\alpha_n\}$ be a set of finite elements of  a field $k$ 
       of characteristic $p>0$.
      Then, there is a subset 
     $\widetilde\alpha=
       \{\widetilde \alpha_1,\widetilde  \alpha_2,\ldots, \widetilde \alpha_n\} \subset \CC$ 
       such that 
       $\widetilde\alpha\ (\mod p)=\alpha$.
\end{prop}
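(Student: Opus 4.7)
I plan to realize the lifting by constructing a subring $S \subset \CC$ of characteristic $0$ that is a discrete valuation ring with uniformizer $p$ and residue field precisely $K := \FF_p(\alpha_1,\ldots,\alpha_n) \subset k$. Once such an $S$ is in hand, the residue map $S \twoheadrightarrow S/pS = K \hookrightarrow k$ gives immediately the injective homomorphism $\iota\colon S\otimes_{\ZZ}\ZZ/(p) \hookrightarrow k$ required by the definition of lifting, and each $\widetilde\alpha_i$ can be chosen as any preimage of $\alpha_i$ under this residue map, which then automatically satisfies $\widetilde\alpha_i \pmod p = \alpha_i$.

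To construct $S$, I would exploit the perfectness of $\FF_p$: the finitely generated extension $K/\FF_p$ admits a separating transcendence basis $\theta_1,\ldots,\theta_d \in K$, so that $K/\FF_p(\theta_1,\ldots,\theta_d)$ is finite and separable. The primitive element theorem gives $K = \FF_p(\theta_1,\ldots,\theta_d)[\beta]$ for some $\beta$, and after clearing denominators and rescaling $\beta$ I may assume its minimal polynomial $m(Y) \in \FF_p[\theta_1,\ldots,\theta_d][Y]$ is monic. I would then pick $\widetilde\theta_1,\ldots,\widetilde\theta_d \in \CC$ algebraically independent over $\QQ$, take any coefficient-wise monic lift $\widetilde m(Y) \in \ZZ[\widetilde\theta_1,\ldots,\widetilde\theta_d][Y]$ of $m$, let $\widetilde\beta \in \CC$ be any complex root of $\widetilde m$, and set
\[
  S := \ZZ[\widetilde\theta_1,\ldots,\widetilde\theta_d]_{(p)}[\widetilde\beta] \subset \QQ(\widetilde\theta_1,\ldots,\widetilde\theta_d)[\widetilde\beta] \subset \CC.
\]

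The base ring $R_0 := \ZZ[\widetilde\theta_1,\ldots,\widetilde\theta_d]_{(p)}$ is a DVR with uniformizer $p$ and residue field $\FF_p(\theta_1,\ldots,\theta_d)$, being the localization of the UFD $\ZZ[\widetilde\theta_1,\ldots,\widetilde\theta_d]$ at the height-one prime $(p)$. Since $\widetilde m$ is monic in the UFD $R_0[Y]$ and its reduction $m$ is irreducible over $R_0/(p) = \FF_p(\theta_1,\ldots,\theta_d)$, Gauss's lemma forces $\widetilde m$ to be irreducible over $\mathrm{Frac}(R_0)$. Hence $S \cong R_0[Y]/(\widetilde m)$ is a finite integral extension of $R_0$ and $S/pS \cong \FF_p(\theta_1,\ldots,\theta_d)[Y]/(m) = K$ is a field, so $pS$ is the unique maximal ideal and $S$ is a DVR with residue field $K$, completing the construction. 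I expect the main technical hurdle to be this irreducibility step: a spurious factorization of $\widetilde m$ over $\mathrm{Frac}(R_0)$ would cause $R_0[\widetilde\beta]$ either to fail to be local or to have residue field strictly larger than $K$, which would destroy the embedding $S/pS \hookrightarrow k$. The combined use of a \emph{separating} transcendence basis (ensuring $m$ is separable and irreducible over $\FF_p(\theta_1,\ldots,\theta_d)$) and Gauss's lemma in the UFD $R_0[Y]$ is precisely what forestalls this.
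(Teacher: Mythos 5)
Your proposal is correct, but it takes a genuinely different route from the paper. The paper's proof works with the \emph{ring} $\zp[\alpha_1,\ldots,\alpha_n]$: it presents it as $S/P$ with $S=\zp[Y_1,\ldots,Y_n]$, lifts the prime $P$ locally by choosing a regular system of parameters $\tf_1,\ldots,\tf_c,p$ of $R_Q$ with $R=\ZZ[Y_1,\ldots,Y_n]$, uses flatness over $\ZZ_{(p)}$ to see that the quotient has characteristic $0$, spreads out by a localization, and only at the very end embeds the resulting finitely generated field $Q(\Sigma)$ into $\CC$ via the (baby) Lefschetz principle. You instead work with the \emph{field} $K=\FF_p(\alpha_1,\ldots,\alpha_n)$ and build the supporting subring directly inside $\CC$: perfectness of $\FF_p$ gives a separating transcendence basis, the primitive element theorem gives a monic minimal polynomial after rescaling, and the reduction-mod-$p$ irreducibility criterion together with Gauss's lemma in $\ZZ[\widetilde\theta_1,\ldots,\widetilde\theta_d]_{(p)}[Y]$ shows that $S=\ZZ[\widetilde\theta]_{(p)}[\widetilde\beta]$ is a one-dimensional local domain with $S/pS\cong K$. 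Your argument avoids both the regular-local-ring/flatness machinery and the Lefschetz principle (the complex embedding is built in from the start by choosing the $\widetilde\theta_i$ algebraically independent in $\CC$ and $\widetilde\beta$ a complex root), and it yields a slightly stronger structural conclusion: the supporting subring is a discrete valuation ring with uniformizer $p$ whose residue field is all of $\FF_p(\alpha)$, rather than just a localization of $\zp[\alpha]$. One cosmetic remark: the separating transcendence basis is needed to invoke the primitive element theorem (a single $\beta$ suffices); the irreducibility of $m$ over $\FF_p(\theta_1,\ldots,\theta_d)$ is automatic for a minimal polynomial and does not itself require separability. Both proofs establish the proposition; yours is the more elementary and self-contained of the two.
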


\begin{proof}
    Considering the subring $\zp[\alpha_1,\ldots, \alpha_n]\subset k$,
    we obtain canonical surjections:
    $$R:=\ZZ[Y_1,\ldots,Y_n]\stackrel{\psi}\longrightarrow S:=\zp[Y_1,\ldots,Y_n]
    \stackrel{\varphi}\longrightarrow \zp[\alpha_1,\ldots,\alpha_n]$$
    with $Y_i \mapsto \alpha_i$.
    Let $P:=Ker \varphi \subset S$ and $Q:=Ker \varphi\cdot\psi \subset R$,
    then these are prime ideals in the regular rings.
    Therefore $R_Q$ and $S_P$ are also regular local rings.
    Hence we obtain
    \begin{enumerate}
       \item[] $\tf_1,\ldots, \tf_c, p \  (\in Q)$ which form a regular system of parameters of $R_Q$,
       \item[] $f_1,\ldots,f_c\ (\in P)$ which form a regular system of parameters of $S_P$ and
       \item[] $\psi(\tf_i)=f_i $ for $i=1,\ldots, c$.
    \end{enumerate}
    Then, $R_Q/(\tf_1,\ldots,\tf_c)R_Q$ is also a regular local ring, in particular it is an
    integral domain.
    Consider the homomorphism $\ZZ_{(p)}\to R_Q $ and a regular sequence 
    $f_1,\ldots,f_c \subset R_Q\otimes_\ZZ\zp=
    R_Q\otimes_{\ZZ_{(p)}}\ZZ_{(p)}/(p)\ZZ_{(p)}$,
    we obtain that $R_Q/(\tf_1,\ldots,\tf_c)R_Q$ is flat over $\ZZ_{(p)}$
    by \cite[Corollary, p.177]{mats}.
    In particular, the homomorphism $\ZZ \to R_Q/(\tf_1,\ldots,\tf_c)R_Q$ is injective,
    which implies the ring $R_Q/(\tf_1,\ldots,\tf_c)R_Q$ is of characteristic $0$.
    
    As $PS_P=(f_1,\ldots,f_c)S_P$, there exists $h\in S\setminus P$ such that 
    $PS_h=(f_1,\ldots,f_c)S_h$.
    Take $\th\in R\setminus Q$ such that $\psi(\th)=h$ and let
    $$\Sigma:=R_\th/(\tf_1,\ldots,\tf_c)R_\th,$$
    then it is also an integral domain and of characteristic $0$.
    Now noting that $S/P=\zp[\alpha_1,\ldots, \alpha_c]$, we obtain
    $$\Sigma\otimes_\ZZ\zp=\frac{R_\th/(\tf_1,\ldots,\tf_c)R_\th}
    {p({R_\th/(\tf_1,\ldots,\tf_c)R_\th})}
    =S_h/(f_1,\ldots,f_c)S_h$$
    $$=(S/P)_h\subset 
    \zp(\alpha_1,\ldots,\alpha_c) \subset k.$$
    By the surjection $\Phi_p:\Sigma\to \Sigma\otimes_\ZZ\zp$
    we can take $\widetilde{\alpha}_1, \ldots, \widetilde{\alpha}_n\in \Sigma$
    corresponding to $\alpha_1,\ldots, \alpha_n\in \Sigma\otimes \zp$.
    
     By the definition of $\Sigma$, the field $K_0:=Q(\Sigma)$ of fractions of $\Sigma$ is
     a finitely generated field extension of $\QQ$.
     Then, by Baby Lefschetz Principle (see, for example, \cite{tao}), 
     there is an isomorphism into the subring:
     $$\phi: K_0 \stackrel{\sim}\longrightarrow \phi(K_0)\subset \CC.$$
     Then we obtain 
     $\widetilde \alpha=\{\widetilde \alpha_1,\ldots, \widetilde \alpha_n\}\subset \CC$
      and $\widetilde \alpha\ (\mod p)=\alpha$.
      As is seen, $\Sigma\subset \CC$ is a subring supporting the lifting.
\end{proof}

\begin{prop}\label{polynomial1}
  Let $k$ be a field of characteristic $p>0$.
  Then, the following hold:
\begin{enumerate} 
\item[(i)]
   For  a  subset  ${\bf f}=\{f_1,\ldots, f_n\}\subset k[X_1,X_2,\ldots, X_N]$, 
  there exists a set $\widetilde{\bf f}=
  \{\widetilde f_1,\ldots, \widetilde f_n\}$ in $\CC[X_1,X_2,\ldots, X_N]$
  such that $\widetilde{\bf f} \ (\mod p)=\bf f$.
  
\item[(ii)]  
  For an  ideal $\a\subset k[X_1,X_2,\ldots, X_N]$, there exists an 
   ideal $\ta\subset \CC[X_1,X_2,\ldots, X_N]$ 
  such that $\ta\subset ( X_1,X_2,\ldots, X_N)$ and $$\ta\ (\mod p)=\a.$$ 
  
  In this case we have in general $$\height \ta\geq \height \a.$$
  Here, we define the height of the unit ideal of a ring $R$ as $\dim R+1$.

 \end{enumerate} 
  \end{prop}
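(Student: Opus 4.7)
The plan is to reduce both parts to Proposition \ref{kurano}, lifting the \emph{coefficients} rather than the polynomials directly. For part (i), I collect the finitely many nonzero coefficients appearing in $f_1, \ldots, f_n$ into a finite subset $\alpha = \{\alpha_1, \ldots, \alpha_m\} \subset k$. Proposition \ref{kurano} produces a lift $\widetilde\alpha \subset \CC$ supported by a subring $\Sigma \subset \CC$ together with an injection $\iota : \Sigma \otimes_\ZZ \ZZ/(p) \hookrightarrow k$. I form $\tf_i \in \Sigma[X_1, \ldots, X_N]$ by replacing every $\alpha_j$ in $f_i$ with $\widetilde\alpha_j$. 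The subring $S := \Sigma[X_1, \ldots, X_N] \subset \CC[X_1, \ldots, X_N]$ then supports the lifting: we have $S \otimes_\ZZ \ZZ/(p) = (\Sigma \otimes_\ZZ \ZZ/(p))[X_1, \ldots, X_N]$, which embeds into $k[X_1, \ldots, X_N]$ through the natural extension of $\iota$ to polynomial rings, and $\Phi_p(\tf_i) = f_i$ by construction.

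For part (ii), I take a set of generators of $\a$, apply (i) to them, and define $\ta$ as the ideal generated by the resulting lifts in $\CC[X_1, \ldots, X_N]$. By Definition \ref{defofmod0}(3), $\ta \pmod p = \a$. To achieve $\ta \subset (X_1, \ldots, X_N)$ I arrange to generate $\a$ by elements of the maximal ideal at the origin whenever this is available; the coefficient-wise construction then preserves the vanishing of the constant term, so each $\tf_i \in (X_1, \ldots, X_N) \subset \CC[X_1, \ldots, X_N]$. For the height inequality I view the construction as a family over $\spec \Sigma$: the morphism $\spec(\Sigma[X_1, \ldots, X_N]/I) \to \spec \Sigma$, where $I = (\tf_1, \ldots, \tf_r)$, has generic fiber of codimension $\height \ta$ (faithful flatness of $Q(\Sigma) \subset \CC$ preserves heights) and special fiber over the prime $(p) \subset \Sigma$ of codimension $\height \a$ (similarly for $\Sigma/(p) \hookrightarrow k$). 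Upper semicontinuity of fiber dimension gives $\dim(\text{special fiber}) \geq \dim(\text{generic fiber})$, i.e., $\height \ta \geq \height \a$, with the convention $\height(R) = \dim R + 1$ accommodating the unit-ideal case.

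The main technical obstacle I anticipate is bookkeeping the supporting subring $\Sigma$ carefully enough that (a) the injection $\Sigma \otimes_\ZZ \ZZ/(p) \hookrightarrow k$ from Proposition \ref{kurano} survives the passage to polynomial rings, and (b) the prime $(p) \subset \Sigma$ really has height one, so the fiber-dimension argument underlying the height comparison is legitimate. Both hinge on the flatness of $\Sigma$ over $\ZZ_{(p)}$ that was established within the proof of Proposition \ref{kurano}, together with standard base-change invariance of height under faithfully flat extensions of fields.
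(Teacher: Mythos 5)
Your proof is correct and follows the paper's own argument essentially verbatim: part (i) lifts the finitely many nonzero coefficients via Proposition \ref{kurano} and takes $\Sigma[X_1,\ldots,X_N]$ as the supporting subring, and part (ii) lifts a generating set and obtains the height inequality from semicontinuity of fiber dimension for the flat family over $\spec\Sigma$, after reducing to the fraction fields of $\Sigma$ and of $\Sigma\otimes_\ZZ\ZZ/(p)$ by faithful flatness of the field extensions. The technical points you flag (flatness of $\Sigma$ over $\ZZ_{(p)}$, primality of $(p)\Sigma$) are exactly the ones the paper relies on, so there is nothing to reconcile.
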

  
 \begin{proof} 
   For a proof of (i), let ${\alpha}=\{\alpha_i\}_{i\in I}\subset k$ be a finite set of coefficients 
   of $f_1,\ldots, f_n$
   containing all nonzero coefficients of them.
   By Proposition \ref{kurano}
   we can take a lifting  $\widetilde \alpha=\{\widetilde \alpha_i\}_{i\in I}\subset \CC$ to characteristic $0$ 
   of $\alpha$.
   When $f_j$ ($j=1,\ldots, n$) is represented as 
   $$f_j=\sum_{e\in \ZZ_{\geq 0}^N }\alpha_{j_e}X^e, \ \ \mbox{where}\ \ X^e=X_1^{e_1}\cdots X_N^{e_N},$$
   we define 
   $$\widetilde f_j=\sum_{e\in \ZZ_{\geq 0}^N }\widetilde \alpha_{j_e}X^e.$$
   Then we have 
  $\widetilde{\bf f} \ (\mod p)=\bf f$.
  When a subring supporting $\widetilde{\alpha}\ (\mod p)=\alpha_i$
  is $\Sigma$, then we can take $\Sigma[X_1,\ldots, X_N]$ as a 
  subring supporting the lifting $\widetilde{\bf f} \ (\mod p)=\bf f$.

    For a proof of (ii), let ${\bf f}=\{f_1,\ldots, f_n\}\subset k[X_1,X_2,\ldots, X_N]$
    be a system of generators of $\a$. 
    Then, by (i) we obtain  a subset $\widetilde{\bf f}=
  \{\widetilde f_1,\ldots, \widetilde f_n\}$ in $\CC[X_1,X_2,\ldots, X_N]$
  such that $\widetilde{\bf f} \ (\mod p)=\bf f$.
  Then by definition, we have a lifting $\ta (\mod p)=\a$.

  Under the notation in the proof of (i),
  a subring supporting the lifting $\ta (\mod p)=\a$
  is $S:=\Sigma[X_1,\ldots, X_N]$.
  
  Denote the fields of fractions of $\Sigma\otimes\zp$ and of $\Sigma$ by $Q$ and $\wq$,
  respectively.
   Let $\a_Q\subset Q[X_1,\ldots,X_N]$ be the ideal generated by $f_1,\ldots,f_n$.
    Let $\ta_\Sigma\subset \Sigma[X_1,\ldots,X_N]$ and $\ta_\wq\subset \wq[X_1,\ldots,X_N]$
    be the ideals generated by $\tf_1,\ldots,\tf_n$ in each ring.
    Then $\a$ is the extension of $\a_Q$ and $\ta$ is the extension of $\ta_\wq$.
    As the field extensions $Q\hookrightarrow k$ and $\wq\hookrightarrow \CC$
    are faithfully flat, the ring extensions
    $Q[X_1,\ldots,X_N]\hookrightarrow k[X_1,\ldots,X_N]$ and 
    $\wq[X_1,\ldots,X_N]\hookrightarrow \CC[X_1,\ldots,X_N]$ are also faithfully flat,
    in particular flat.
    Then, by   \cite[Theorem 15.1]{mats},
     we have 
        $$\height \a_Q=\height(\a_Q(k[X_1,\ldots,X_N]))=\height \a,$$
         $$\height \ta_\wq=\height(\ta_\wq(\CC[X_1,\ldots,X_N]))=\height \ta.$$

Therefore, for the proof of the lemma, we have only to show 
   $$\height \a_Q\leq\height\ta_\wq.$$

   Actually let  $\pi: Z(\ta_\Sigma)\to \spec\Sigma$ be the restriction of  the flat morphism\\
   $\spec \ZZ[X_1\ldots,X_N]\to \spec\ZZ$.
   Let $$\Phi'_p:\Sigma[X_1,\ldots,X_N]\to \Sigma\otimes\zp[X_1,\ldots,X_N]\hookrightarrow Q[X_1,\ldots,X_N]$$ 
   be the composite of the canonical projection and the inclusion.
   Then, the ideal $(\Phi'_p(\ta_\Sigma))\subset Q[X_1,\ldots,X_N]$ generated by the image $\Phi'_p(\ta_\Sigma)$
    defines a special fiber of $\pi$ over 
   $\spec Q$.   
   On the other hand, $\ta_\wq$ defines the generic fiber of $\pi$ over
   $\spec \wq$.
   Then, by \cite[Theorem 15.1]{mats}, we have
   $$\dim Z(\ta_\wq)\leq \dim Z(\Phi'_p(\ta_\Sigma)).$$
   Here, as $\spec \Sigma[X_1,\ldots,X_N]\to \spec \Sigma$ is flat, we have the inequality of codimensions, i.e.,
   $$\height \ta_\wq\geq\height (\Phi'_p(\ta_\Sigma)),$$
   where the right hand side is not less than $\height \a_Q$ because 
   of the inclusion \\
   $(\Phi'_p(\ta_\Sigma))\supset \a_Q$.
   (Note that the inclusion is not equal in general.)
   Now we obtain
   $$\height \ta_\wq\geq\height \a_Q,$$ 
   which completes the proof of the equality
   $$\height\ta\geq\height\a.$$

\end{proof}

   
  
%




\vskip1truecm
\section{The conjectures and their relations}

\noindent
   In this section we study a pair $(A, \a^e)$ consisting of a nonsingular 
   affine variety $A$ of dimension $N$  defined
   over an algebraically closed field of arbitrary characteristic
      and a ``multiideal" $\a^e=\a_1^{e_1}\cdots\a_s^{e_s}$ on $A$  with the exponent 
     $e=\{e_1,\ldots,e_s\}\subset \RR_{>0}$.
     First we give a basic definition.

\begin{defn}\label{toric}
    We say that  $E$ is a {\sl prime divisor over} a variety $A$, 
   if there is a birational morphism $A'\to A$ with normal  $A'$ such that
   $E$ is a prime divisor on $A'$.
   
   We say that a prime divisor $E$ over $A=\AA_k^N$ is a {\sl toric prime divisor},
   if there is a vector $w=(w_1,\ldots,w_N)\in \ZZ_{\geq0}^N$ such that
   $$\val_E(f)=\min \{\langle w, u \rangle \mid X^u \in f\},$$ 
   where $X^u \in f$ means that the monomial $X^u $ appears in $f$ 
   with a nonzero coefficient and $\langle w,u \rangle=\sum_i w_iu_i$.
  
   Note that in this case, $E$ is a divisor on a toric variety $A'$ birational to $\AA_k^N$
   such that $E$ corresponds to the one dimensional cone $\RR_{\geq0}w$ in 
   the defining fan of $A'$.

\end{defn}

\begin{defn}\label{defoflogcano}
    The log discrepancy of  a pair $ (A, \a^e)$    
    at a prime divisor $E$ over $A$ is 
    defined as 
$$a(E; A,\a^e):=k_E-\sum_{i=1}^s e_i\val_E\a_i+1,$$
      where $k_E$ is the coefficient of the relative canonical divisor 
      $K_{\overline{A}/A}$ at $E$.\\
Here $\varphi:\overline{A}\to A$ is a 
  birational morphism with normal $\overline{A}$ 
 such that $E$ appears on 
   $\overline{A}$. 

\end{defn}

\begin{defn}\label{defofmld}
    Let $(A,\a^e)$ be a pair  and $x\in A$ a closed point.
    Then the {\sl minimal log discrepancy} is defined as follows:
    
 \begin{enumerate}
\item When $\dim A\geq 2$,
    $$\mld (x; A, \a^e)
    =\inf\{ a(E;A, \a^e) \mid  E :\ {\operatorname { prime\ divisor \ with \ the\ 
     center}}\ \ x \} .$$ 
\item When $\dim A=1$, define $\mld (x; A,\a^e)$ 
     by the same definitions as above if the right hand side
     of the above definition 
     is non-negative and otherwise define $\mld(x; A, \a^e)= -\infty$.
\end{enumerate}
\end{defn}

\begin{defn} 
    Let $A$, $N$,  and $e$ as above and $x\in A$ a closed point.
    We say that a prime divisor $E$ over $A$ with the center $\{x\}$ 
    computes $\mld(x; A, \a^e)$,
    if 
    
    $$\left\{\begin{array}{ll}
       a(E; A, \a^e)=\mld(x; A, \a^e),& \mbox{when}\ \ \mld(x; A, \a^e)\geq 0\\
       a(E; A, \a^e)<0,  &    \mbox{when}\ \ \mld(x; A, \a^e)=-\infty\\ 
    \end{array}\right. $$
\end{defn}

\begin{rem} If there is a log resolution of $(A, \a^e)$ in a neighborhood of $x$,
   or if $e$ is a set of rational numbers, then a prime divisor computing 
   $\mld(x; A, \a^e)$ exists.
   Otherwise, the existence of such a divisor is not known in general.
\end{rem}

     Now we are going to interpret the conjecture $(M_{N,e})$ 
     in terms of jet schemes.
     For that we introduce the notion of jet schemes briefly. 
     For basic properties on jet schemes can be referred in  \cite{EM}, \cite{icr}, \cite{ir2}.
     
\begin{defn}
  
 Let \( X \) be a scheme of finite type over a field $k$ and  $k'\supset k$ a field extension.
For  \( m\in \ZZ_{\geq 0} \) a \( k \)-morphism \( \spec k'[t]/(t^{m+1})\to X \) is called an  {\it{\( m \)-jet}} of \( X \) and 
 \( k \)-morphism \( \spec k'[[t]]\to X \) is called an {\it {arc}} of \( X \).
\end{defn}


Let 
 \( X_{m} \) be the {\it space of \( m \)-jets} or  the $m$-{\it{jet scheme}}
   of  \( X \). 
   It is well known that $X_m$ has a scheme structure of finite type over $k$.
   There exists the projective limit $$X_\infty:=\lim_{\overleftarrow {m}} X_m$$
   and it is called the {\it space of arcs} or the {\it space of $\infty$-jet} of $X$.
   Every point $\spec k'\to X_\infty$ corresponds to an arc
   $\spec k'[[t]]\to X$.

\begin{defn}
    Denote the canonical truncation morphisms induced from $k[[t]]\to k[t]/(t^{m+1})$ 
    and $k[t]/(t^{m+1})\to k$  by
    $\psi_m: X_\infty\to X_m$ and $\pi_m: X_m\to X$, respectively.
    In particular we denote the  morphism  $\psi_0=\pi_\infty : X_\infty \to X$ by $\pi$.
    We also denote the canonical truncation morphism $X_{m'} \to X_m$ $(m'> m)$ 
    induced 
    from $k[t]/(t^{m'+1})\to k[t]/(t^{m+1})$ by
    $\psi_{m', m}$.
 
\end{defn}     
 

\begin{defn} We define the subset {\sl contact locus} in the space of arcs as follows:
       $$\cont^{\geq m}(\a)=\{\gamma \in X_\infty \mid \ord_\gamma(\a):=\ord_t  
       \gamma^*(\a)\geq m\},$$
       where $\gamma^*:\o_X\to k'[[t]]$
      is the  homomorphism of rings corresponding to the arc $\gamma: \spec k'[[t]]\to X$.

     By this definition, we can see that
    $$\cont^{\geq m}(\a)=\psi_{m-1}^{-1}(Z(\a)_{m-1}),$$
     where  $Z(\a)$ is the closed subscheme defined by the ideal $\a$ on $X$.

\end{defn}

\begin{exmp}\label{equation}
     Let $Z\subset \AA^N=:A$ be a closed subscheme of affine $N$-space
    $\AA^N=\spec k[X_1,\ldots,X_N]$ defined over a field $k$ with the defining ideal 
    $\a\subset k[X_1,\ldots,X_N]$.
    Assume $\a$ is generated by $f_1,\ldots, f_r\in k[X_1,\ldots,X_N]$.
    We define polynomials $$F_i^{(j)}\in k[X_\ell^{(q)}\mid 1\leq \ell\leq N, 0\leq q \leq j]$$
    so that 
    $$f_i\left(\sum_{q\geq 0} X_1^{(q)}t^q, \sum_{q\geq 0} X_2^{(q)}t^q, \ldots, \sum_{q\geq 0} X_N^{(q)}t^q\right)=
    F_i^{(0)}+F_i^{(1)}t +\cdots + F_i^{(j)}t^j+ \cdots.$$
    
    Then the $m$-jet scheme $Z_m$ is defined in $(\AA^N)_m\simeq\AA^{N(m+1)}$
    by the ideal of $k[X_\ell^{(q)}\mid 1\leq \ell\leq N, 0\leq q \leq m]$
    generated by
    $$F_i^{(j)}\ \  (i=1,\ldots, r, j=0, 1,\ldots, m).$$
    Here, we note that if all coefficients of $f_i$'s are in a subring $\Sigma\subset k$,
    then all coefficients of   $F_i^{(j)}$'s are also in $\Sigma$.  
    (This fact will be used in the proof of Lemma \ref{localjet}.)
    
    The fiber $\pi_m^{-1}(0)$ of the origin $0\in \AA^N=A$ 
    by the truncation morphism $\pi_m: A_m\to A$ is
    defined in \\
    $k[X_\ell^{(q)}\mid 1\leq \ell\leq N, 0\leq q \leq m]$ by 
    $$X_1^{(0)},\ldots,X_N^{(0)}.$$
  
\end{exmp}
          
\begin{prop}[\cite{EM} for characteristic $0$, \cite{ir2} for arbitrary characteristic] 
\label{formula}
  Let $A$ be a nonsingular variety  defined over an algebraically closed field of
  arbitrary characteristic, $0\in A$ a closed point, $e=\{e_1,\ldots, e_s\}$
  a finite subset of $\RR_{>0}$ and $\a^e$ as  
  in the beginning of this section. 
  Then, we obtain the  following formula:
 
 \begin{equation}\label{formula2}
    \mld(0; A, \a^e) \ \ \ \ \ \ \ \ \ \ \ \ \ \ \ \ \ \ \ \ \ \ \ \ \ \ \ \ \ \ \ \ \ \ \ \ \ \ \ \ \ \ \ \ \ \ \ \ \ \ \ \ \ \ \ \ \ \ \ \ \ \ \ 
\end{equation} 
 $$   =\inf_{m\in \ZZ_{\geq0}^s}\left\{\codim(\cont^{\geq m_1}(\a_1)\cap
 \cdots\cap\cont^{\geq m_s}(\a_s)\cap \pi^{-1}(0), A_\infty)-\sum_{i=1}^s e_i m_i
 \right\}.$$
\end{prop}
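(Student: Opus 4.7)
The plan is to derive the formula from the valuative description of $\mld$ together with the correspondence between divisorial valuations on a nonsingular variety $A$ and irreducible cylinders in the arc space $A_\infty$. By definition,
$$\mld(0; A, \a^e) = \inf_E\left\{ k_E + 1 - \sum_{i=1}^s e_i\, \val_E(\a_i)\right\},$$
where $E$ ranges over prime divisors over $A$ with center containing $0$. So what I need is a dictionary converting the quantity $k_E + 1$ into a codimension in $A_\infty$ and converting the quantities $\val_E(\a_i)$ into the conditions $\gamma \in \cont^{\geq m_i}(\a_i)$.

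The key input is the following structure theorem (due to Ein--Lazarsfeld--Musta\cedilla{t}\v{a} in characteristic $0$ and extended by the author and Reguera in \cite{ir2} to arbitrary characteristic): for $A$ nonsingular, every irreducible closed cylinder $C \subset A_\infty$ has a well-defined generic divisorial valuation $v_C = q\cdot v_E$ (with $q\in\ZZ_{>0}$ and $E$ a prime divisor over $A$), and
$$\codim(C, A_\infty) \geq q(k_E + 1),$$
with equality when $C$ is the maximal divisorial set $C(q v_E)$. I would recall this statement in one paragraph, then carry out the two inequalities.

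For the inequality $(\geq)$ between the right-hand side of the formula and $\mld$, I would fix $E$ with center at $0$ and put $m_i := \val_E(\a_i)$. The maximal divisorial set $C(v_E)$ lies inside $\bigcap_i \cont^{\geq m_i}(\a_i) \cap \pi^{-1}(0)$ and has codimension $k_E + 1$, so the codimension of this intersection is at most $k_E + 1$. Subtracting $\sum e_i m_i$ and taking the infimum over all $m\in\ZZ_{\geq 0}^s$ on the left and over all $E$ on the right gives
$$\inf_{m}\Bigl\{\codim\bigl(\textstyle\bigcap_i \cont^{\geq m_i}(\a_i)\cap \pi^{-1}(0),\, A_\infty\bigr) - \sum_i e_i m_i\Bigr\} \leq \mld(0;A,\a^e).$$

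For the reverse inequality, I would fix $m\in\ZZ_{\geq 0}^s$ and write $W_m := \bigcap_i \cont^{\geq m_i}(\a_i) \cap \pi^{-1}(0)$, which is a cylinder. Each irreducible component $C$ of $W_m$ corresponds to a divisorial valuation $v_C = q\cdot v_{E_C}$ with center at $0$; by the structure theorem, $\codim(C, A_\infty)\geq q(k_{E_C}+1)$. Since $\gamma\in C$ forces $v_C(\a_i)\geq m_i$, we have $q\val_{E_C}(\a_i)\geq m_i$, hence $\sum e_i m_i \leq q\sum e_i \val_{E_C}(\a_i)$. Combining,
$$\codim(C, A_\infty) - \sum_i e_i m_i \;\geq\; q\Bigl(k_{E_C}+1 - \sum_i e_i \val_{E_C}(\a_i)\Bigr) = q\cdot a(E_C; A,\a^e).$$
When $a(E_C;A,\a^e) \geq 0$ this is $\geq a(E_C;A,\a^e)\geq \mld(0;A,\a^e)$; when $a(E_C;A,\a^e)<0$ one checks separately that $\mld(0;A,\a^e)=-\infty$, so the inequality is trivial. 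Taking the minimum over components gives the codimension of $W_m$ and completes the proof.

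The main obstacle is the structure theorem used in Step 2, namely that every irreducible cylinder comes from a divisorial valuation with the stated codimension bound. In characteristic zero this rests on log resolutions and motivic-integration-style arguments, both unavailable in positive characteristic; its extension to arbitrary characteristic is precisely the content of \cite{ir2}, and my plan is to invoke it as a black box. Once that correspondence is in hand, the two inequalities above are essentially bookkeeping.
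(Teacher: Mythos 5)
The paper states this proposition with citations and no proof (it is attributed to \cite{EM} in characteristic $0$ and to \cite{ir2} for arbitrary characteristic), so there is no in-paper argument to compare against. Your sketch correctly reconstructs the argument from those references: the two inequalities you derive are exactly the standard bookkeeping, and you have correctly isolated the one nontrivial input---the structure theorem that every finite-codimensional irreducible component of a finite intersection of contact loci in $A_\infty$ is a maximal divisorial set $C_A(q\cdot\val_E)$ of codimension $q(k_E+1)$---as the piece whose extension to positive characteristic is the content of \cite{ir2}. Two small points worth making explicit: writing $\pi^{-1}(0)=\cont^{\geq 1}(\m_0)$ (with $\m_0$ the maximal ideal at $0$) makes the structure theorem literally applicable to your set $W_m$, and in the case $a(E_C;A,\a^e)<0$ the existence of such an $E_C$ with center $\{0\}$ is exactly what forces $\mld(0;A,\a^e)=-\infty$, so that branch is immediate rather than needing a separate check.
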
     
     
\begin{defn} Under the same notation as above. 
     Define the function $s_m(0;A, \a^e)$ on $m=(m_1, m_2,\ldots, m_s)\in \ZZ_{\geq0}^s$
     as follows:
$$s_m(0; A, \a^e)=\codim\left(\cont^{\geq m_1}(\a_1)\cap
 \cdots\cap\cont^{\geq m_s}(\a_s)\cap \pi^{-1}(0), A_\infty\right)-\sum_{i=1}^s e_i m_i
 $$
 
\end{defn}     

\begin{rem}\label{concreteequation}
   For a given $m=(m_1,\ldots,m_s)\in \ZZ_{\geq0}^s$,
   denote $m'_i=m_i-1$ and assume that $m'_1=\max\{m'_i\}$ to simplify
   the notation.
   By the definition of $\codim (-, A_\infty)$ (\cite[Definition 3.2]{DEI}),
   we have the following: 
\begin{equation}\label{sm}   
     s_m(0; A, \a^e)\ \ \ \ \ \ \ \ \ \ \ \ \ \ \ \ \ \ \ \ \ \ \ \ \ \ \ \ \ \ \ \ \ \ \ \ \ \ \ \ \ \ \ \ \ \\  \ \ \ \ \ \ \ \ \ \ \ \ \ \ \ \ \ \ \ \ \ \ \ \ \ \ 
\end{equation}     
 $$    =Nm_1-\dim \left( Z(\a_1)_{m'_1 }\cap
     \psi_{m'_1 ,m'_2 }^{-1}(Z(\a_2)_{ m'_2 })\cap\cdots\cap 
     \psi_{m'_1 ,m'_s }^{-1}(Z(\a_s)_{ m'_s })\cap\pi_{m'_1}^{-1}(0)      \right)-
     \sum_{i=1}^s e_i m_i,
$$  
    where, we define $Z(\a_i)_{-1}=A$ as a convention.
   Later on, we will use this expression of $s_m(0; A, \a^e)$.
   
   If $A=\AA_k^N$, $0\in \AA_k^N$ is the origin and $\a_i$ are generated by $f_{ij}$ $(j=1,\ldots,
   r_i)$ for each $i=1,\ldots, s$, 
   then by Example \ref{equation},
   $$Z(\a_1)_{m'_1 }\cap
     \psi_{m'_1 ,m'_2 }^{-1}(Z(\a_2)_{ m'_2 })\cap\cdots\cap 
     \psi_{m'_1 ,m'_s }^{-1}(Z(\a_s)_{ m'_s })\cap\pi_{m'_1}^{-1}(0)$$
     is defined on $\AA^{Nm_1}=(\AA^N)_{m'_1}$ by the ideal generated by
     $${F}_{1,\ell}^{(q)}\ (\ell=1,\ldots, r_1, q=0,\ldots, m'_1), 
          \ldots ,
         {F}_{s,\ell}^{(q)}\ (\ell=1,\ldots, r_s, q=0,\ldots, m'_s) \ \ \mbox{and}
         \ \ X_1^{(0)},\ldots, X_N^{(0)}.$$
          Note that these are elements of $k[X_i^{(q)} \mid 1\leq i\leq N, 0\leq q\leq m'_1]$.
\end{rem}

\begin{lem}\label{localjet}
    Let $k$ be an algebraically closed field of characteristic $p>0$.
    Let $\a=\{\a_1,\ldots,\a_s\}$ be a set of ideals of $ k[X_1,\ldots, X_N]$
    such that $\a_i\subset (X_1,\ldots,X_N)$ for every $i=1,\ldots,s$
    and $e=\{e_1,\ldots, e_s\}\subset \RR_{>0}$.
    Let $\ta= \{\ta_1,\ldots,\ta_s\}$ be a lifting to characteristic $0$  of $\a$.

    Then for every $m\in \ZZ_{\geq0}^s$ it follows 
    $$s_m(0; \AA_k^N, \a^e)\leq s_m(0; \AA_\CC^N, \ta^e).$$
\end{lem}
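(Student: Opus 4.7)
The plan is to mimic the height comparison of Proposition \ref{polynomial1}(ii), applied to the ideals cutting out the intersection schemes appearing in Remark \ref{concreteequation}. By Proposition \ref{polynomial1}(ii) I fix a subring $\Sigma\subset\CC$ supporting the lifting, together with generators $\tf_{i,\ell}\in\Sigma[X_1,\ldots,X_N]$ of $\ta_i$ whose reductions $f_{i,\ell}=\Phi_p(\tf_{i,\ell})$ generate $\a_i$ in $k[X_1,\ldots,X_N]$. Applying the jet equation construction of Example \ref{equation} to the $\tf_{i,\ell}$ produces polynomials $\widetilde F^{(q)}_{i,\ell}\in\Sigma[X_j^{(q)}\mid 1\le j\le N,\ 0\le q\le m'_1]$; because this construction is polynomial over $\ZZ$ in the coefficients of the input polynomials, the reductions $\Phi_p(\widetilde F^{(q)}_{i,\ell})$ coincide on the nose with the polynomials $F^{(q)}_{i,\ell}$ built from the $f_{i,\ell}$.

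Let $\widetilde J_\Sigma\subset\Sigma[X_j^{(q)}]$ be the ideal generated by all the $\widetilde F^{(q)}_{i,\ell}$ (for $0\le q\le m'_i$, with the convention that no such generator is included when $m'_i=-1$) together with $X_1^{(0)},\ldots,X_N^{(0)}$, and let $J\subset k[X_j^{(q)}]$, $\widetilde J\subset\CC[X_j^{(q)}]$ be the analogous ideals. By Remark \ref{concreteequation}, $\dim k[X_j^{(q)}]/J$ and $\dim \CC[X_j^{(q)}]/\widetilde J$ are exactly the dimensions appearing in $s_m(0;\AA_k^N,\a^e)$ and $s_m(0;\AA_\CC^N,\ta^e)$.

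Let $\wq$ and $Q$ denote the fraction fields of $\Sigma$ and of $\Sigma\otimes_\ZZ\zp$, and let $\widetilde J_\wq\subset\wq[X_j^{(q)}]$, $J_Q\subset Q[X_j^{(q)}]$ be the corresponding extensions. Exactly as in the proof of Proposition \ref{polynomial1}(ii), the composition
$$\Phi'_p\colon\Sigma[X_j^{(q)}]\to(\Sigma\otimes_\ZZ\zp)[X_j^{(q)}]\hookrightarrow Q[X_j^{(q)}]$$
sends the generators of $\widetilde J_\Sigma$ to generators of $J_Q$, so $(\Phi'_p(\widetilde J_\Sigma))\supset J_Q$ as ideals of $Q[X_j^{(q)}]$. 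Applying \cite[Theorem 15.1]{mats} to the flat morphism $\spec\Sigma[X_j^{(q)}]\to\spec\Sigma$ gives $\height \widetilde J_\wq\geq \height(\Phi'_p(\widetilde J_\Sigma))\geq \height J_Q$, while faithful flatness of the field extensions $\wq\hookrightarrow\CC$ and $Q\hookrightarrow k$ (hence of the induced polynomial-ring extensions) together with \cite[Theorem 15.1]{mats} yields $\height \widetilde J = \height \widetilde J_\wq$ and $\height J = \height J_Q$. Since all four ambient polynomial rings share the Krull dimension $N(m'_1+1)$, the identity $\dim A/I=\dim A-\height I$ for polynomial rings $A$ over a field converts the height inequality into
$$\dim k[X_j^{(q)}]/J\geq \dim\CC[X_j^{(q)}]/\widetilde J.$$
Plugging into the formula of Remark \ref{concreteequation}, the common terms $Nm_1$ and $\sum_i e_im_i$ cancel, and the dimension inequality flips to the desired bound $s_m(0;\AA_k^N,\a^e)\leq s_m(0;\AA_\CC^N,\ta^e)$.

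I expect the main technical step to be the bookkeeping for the inclusion $(\Phi'_p(\widetilde J_\Sigma))\supset J_Q$: this requires the observation in Example \ref{equation} that the jet polynomials inherit the coefficient ring of the defining equations, so $F^{(q)}_{i,\ell}$ genuinely lives in $(\Sigma\otimes_\ZZ\zp)[X_j^{(q)}]\subset Q[X_j^{(q)}]$ and is the $\Phi'_p$-image of $\widetilde F^{(q)}_{i,\ell}$. Once this is in place, the remainder of the proof is a direct application of the height-inequality technique already established in Proposition \ref{polynomial1}(ii).
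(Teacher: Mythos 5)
Your proposal is correct and follows essentially the same route as the paper: form the jet equations for the lifted generators, observe (via Example \ref{equation}) that they reduce mod $p$ to the jet equations of the original generators, and deduce the height inequality for the defining ideals of the two intersection schemes, which converts into the dimension inequality and hence the bound on $s_m$. The only difference is cosmetic — the paper cites the height inequality of Proposition \ref{polynomial1}(ii) as a black box applied to the lifting $I_m(\ta)\ (\mathrm{mod}\ p)=I_m(\a)$, whereas you inline its proof — so no further comparison is needed.
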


\begin{proof} 
    Let $\a_i$ be generated by $f_{ij}$ $(j=1,\ldots, r_i)$
    and  $\ta_i$ be generated by $g_{ij}$ $(j=1,\ldots, r_i)$ 
    such that   
\begin{equation}\label{generator}    
     \{g_{ij} \mid i, j\} \ (\mod p)=
    \{f_{ij} \mid i, j\}.
\end{equation}    
    
    Take $m\in \ZZ_{\geq0}^s$ and let $m_1=\max \{m_i\}$ to simplify the notation.
    We use the notation in Remark \ref{concreteequation}.
    Then, for every $m'_i:=m_i-1$, the $m'_i$-th jet scheme $Z(\a_i)_{m'_i}$
    is defined by 
    $${F}_{i,j}^{(q)}\ (j=1,\ldots, r_1, q=1,\ldots, m'_i), $$
     where   ${F}_{i,j}^{(q)}$ is defined from $f_{i,j}$ as in Example \ref{equation}.
     In the same way, $Z(\ta_i)_{m'_i}$
    is defined by 
    $${G}_{i,j}^{(q)}\ (j=1,\ldots, r_1, q=1,\ldots, m'_i), $$
    where   ${G}_{i,j}^{(q)}$ is defined from $g_{i,j}$ as well.
     Now by the definition of $F_{i,j}^{(q)}$ and $G_{i,j}^{(q)}$ and (\ref{generator}), 
     we have
     $$\{G_{i,j}^{(q)}\mid i,j, q\}\  (\mod p)=\{ F_{i,j}^{(q)}\mid i,j, q \}.$$ 
        In the formula (\ref{sm}),
        we denote the defining ideal of 
        $$Z(\a_1)_{m'_1 }\cap
     \psi_{m'_1 ,m'_2 }^{-1}(Z(\a_2)_{ m'_2 })\cap\cdots\cap 
     \psi_{m'_1 ,m'_s }^{-1}(Z(\a_s)_{ m'_s })\cap\pi_{m'_1}^{-1}(0) $$
     in $(\AA_k^N)_{m'_1}$
     by $I_{m}(\a)$ and the defining ideal of
           $$Z(\ta_1)_{m'_1 }\cap
     \psi_{m'_1 ,m'_2 }^{-1}(Z(\ta_2)_{ m'_2 })\cap\cdots\cap 
     \psi_{m'_1 ,m'_s }^{-1}(Z(\ta_s)_{ m'_s })\cap\pi_{m'_1}^{-1}(0) $$
     in $(\AA_\CC^N)_{m'_1}$
     by $I_{m}(\ta)$.
        Since, $I_m(\a)$ is generated by 
        $$ {F}_{1,j}^{(q)}\ (j=1,\ldots, r_1, q=1,\ldots, m'_1), 
          \ldots ,
          {F}_{s,j}^{(q)}\ (j=1,\ldots, r_s, q=1,\ldots, m'_s), X_1^{(0)},\ldots, X_N^{(0)}$$
          and $I_m(\a)$ is generated by 
        $$ {G}_{1,j}^{(q)}\ (j=1,\ldots, r_1, q=1,\ldots, m'_1), 
          \ldots ,
          {G}_{s,j}^{(q)}\ (j=1,\ldots, r_s, q=1,\ldots, m'_s), X_1^{(0)},\ldots, X_N^{(0)}.$$
          We obtain that 
          $$I_m(\ta)\ (\mod p)=I_m(\a).$$
       Then by Proposition  \ref{polynomial1}, (ii), we have
 \begin{equation}\label{heightIm}
       \height I_m(\a)\leq \height I_m(\ta).
 \end{equation}      
       On the other hand, (\ref{sm}) gives
       
      $$      s_m(0; \AA_k^N, \a^e)\  =Nm_1-\dim Z(I_m(\a))-
        \sum_{i=1}^s e_i m_i,
    $$ 
        $$      s_m(0; \AA_\CC ^N, \a^e) =Nm_1-\dim Z(I_m(\ta))-
        \sum_{i=1}^s e_i m_i,
    $$      
       Therefore, by (\ref{heightIm})  we obtain 
       $$s_m(0;\AA_k^N,\a^e)\leq s_m(0;\AA_\CC^N,\ta^e).$$       
        \end{proof}

\begin{defn}\label{smcompute}
     Let $A$, $x$, $\a$ and $e$ be as in the beginning of this section.
    We say that $m\in \ZZ_{\geq0}^s$ (or $s_m(x; A, \a^e)$)
     computes $\mld(x; A,\a^e)$ if 
    $$\left\{\begin{array}{ll}
       s_m(x; A, \a^e)=\mld(x; A, \a^e),& \mbox{when}\ \ \mld(x; A, \a^e)\geq 0\\
       s_m(x; A, \a^e)<0,  &    \mbox{when}\ \ \mld(x; A, \a^e)=-\infty\\ 
    \end{array}\right. $$ 
\end{defn}

Now, we state  the conjectures about the minimal log discrepancies.

\begin{conj}[$D_{N,e}$]
     Let $A=\AA_k^N$ be  defined over 
     an algebraically closed field $k$ of arbitrary characteristic, $0$ the origin
      and $e$ a finite subset of $\RR_{>0}$.
    Then, there is a positive integer $\ell'_{N,e}$(depending only on $N$ and $e$) such that
     for every multiideal $\a^e$ with the exponent $e$ on $A$,
     there is $m\in \ZZ^s_{\geq 0}$ $(s=\# e)$ computing $\mld(0; A,\a^e)$
     satisfying  $|m|:=\sum m_i \leq \ell'_{N,e}$.

\end{conj}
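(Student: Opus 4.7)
The plan is to prove $D_{N,e}$ by a direct analysis of the function $m \mapsto s_m(0; \AA_k^N, \a^e)$ on $\ZZ_{\geq 0}^s$. By Proposition \ref{formula}, the minimal log discrepancy equals the infimum of this function, so the task splits into (i) showing the infimum is attained, and (ii) showing it is attained at some $m$ with $|m|$ bounded by a constant depending only on $N$ and $e$.

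For (i), when $e \subset \QQ$ the values of $s_m$ lie in a discrete lattice $\frac{1}{d}\ZZ$ for a fixed $d$, so any finite infimum is attained; for general $e \subset \RR_{>0}$, a perturbation argument approximating $e$ by rationals, combined with the piecewise-linear dependence of $s_m$ on $e$ visible from Remark \ref{concreteequation}, should reduce to the rational case (one also needs to handle the $\mld = -\infty$ case separately, which only requires exhibiting some $m$ with $s_m < 0$). The real content is (ii). I would analyze the first differences $s_{m + \mathbf{1}_i} - s_m$ using the explicit form in Remark \ref{concreteequation}: passing from $m$ to $m + \mathbf{1}_i$ adds $N$ new jet coordinates $X_\ell^{(m_i)}$ together with a new batch of equations $F^{(m_i)}_{i,j}$, so the increment in $s_m$ is $(N - \text{codimension drop in } Z(I_m)) - e_i$, which simplifies (up to degenerate contributions) to (number of independent new equations modulo the old defining ideal) $-\, e_i$. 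Once enough independent new equations appear at each step, the increment stays positive and the infimum cannot lie further out.

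The main obstacle is to bound, uniformly in $\a^e$, the threshold on $|m|$ past which the new equations $F^{(q)}_{i,j}$ are sufficiently independent. These equations are essentially higher Hasse--Schmidt derivatives of the generators of $\a_i$, and their eventual stabilization governs a kind of Hilbert function on the jet space. Such stabilization always occurs for a fixed ideal, but making it uniform in $N$ and $e$ (and not in the degrees or complexity of the generators) appears to require new input: either a uniform Artin-type approximation for jet schemes, or a bound on integral closure invariants of ideals with a fixed $\mld$. In the absence of resolution of singularities in positive characteristic, I do not see an elementary route, and I expect that $D_{N,e}$ will remain open in full generality, though the ``lifting to characteristic $0$'' method developed in the preceding section provides a promising route to handle important subclasses of pairs by transferring the characteristic-zero version of the conjecture.
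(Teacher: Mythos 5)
The statement you were asked about is labeled $D_{N,e}$ as a \emph{Conjecture} in the paper, and the paper contains no proof of it: its only results about $D_{N,e}$ are (a) Proposition \ref{equiv}, showing $D_{N,e}$ is equivalent to the Musta\c{t}\v{a}--Nakamura conjecture $M_{N,e}$ via the correspondence between prime divisors $E$ and tuples $m$ through maximal divisorial sets, and (b) Theorem \ref{mainthm}, which only establishes a \emph{descent} of the statement from characteristic $0$ to the restricted class $\mathcal M_k$ by the lifting construction. So there is no paper proof to match yours against, and your closing assessment --- that the conjecture remains open in full generality and that the lifting method only yields subclasses --- is exactly right. You were honest about this, which is the correct outcome here.

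That said, your sketch should not be read as a proof even of partial cases, and the gap you flag is not a technical loose end but the entire content of the conjecture. Concretely: attaining the infimum of $s_m$ for rational $e$ (a bounded-below subset of $\frac1d\ZZ$ has a minimum) gives existence of a computing $m$ for a \emph{fixed} pair, but says nothing about a bound $\ell'_{N,e}$ independent of $\a$; and your first-difference analysis of $s_{m+\mathbf{1}_i}-s_m$ cannot be made uniform, because the point at which the new jet equations $F^{(q)}_{i,j}$ become ``sufficiently independent'' depends on the generators of $\a_i$ (their degrees, multiplicities, integral closures), which is precisely the dependence the conjecture asserts can be removed. Even in characteristic $0$ this uniformity is known only in special cases (e.g.\ $N=2$, or monomial ideals), via generic-limit arguments as in the appendix rather than via a direct Hilbert-function analysis of jet schemes; in positive characteristic the paper's route is to transfer the characteristic-$0$ statement through Lemma \ref{keylemma} and Lemma \ref{localjet}, which currently works only when a toric approximation of the minimal log discrepancy is available (the class $\mathcal M_k$), pending an affirmative answer to Problem \ref{problem}. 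If you want to contribute to this circle of ideas, the two concrete targets the paper isolates are Problem \ref{problem} (lifting an arbitrary divisorial valuation with its $k_E$ and $\val_E\a_i$ preserved) and the equivalence machinery of Proposition \ref{equiv}, not a direct uniform bound on jet-scheme codimensions.
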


  We obtain the equivalence of the conjectures.
   The equivalence 
   on  $\a^e$ of  a special type is proved in \cite{findet} and
  its proof works essentially for the general case, 
  but for the reader's convenience we exhibit the proof here. 

\begin{prop}\label{equiv}
    Let $A$ be an $N$-dimensional nonsingular affine variety defined over 
     an algebraically closed field $k$ of arbitrary characteristic, $0\in A$ a closed point
      and $e$ a finite subset of $\RR_{>0}$.
      Then, 
      we obtain the following:
    \begin{enumerate}
         \item[(i)]  For any multiideal $\a^e$ with the exponent $e$ on $A$,
         there exists a prime divisor $E$ computing $\mld(0;A, \a^e)$
         if and only if there exists $m\in \ZZ_{\geq0}^s$  
        computing $\mld(0;A, \a^e)$;
        \item[(ii)] In the set of pairs $(A, \a^e)$ consisting of 
        nonsingular affine varietie $A$ and  multiideals ideal $\a^e$,
        $M_{N,e}$ holds if and only if $D_{N,e}$ holds.         
   \end{enumerate}
\end{prop}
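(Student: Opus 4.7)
The argument rests on the formula (\ref{formula2}) of Proposition~\ref{formula} together with the bijection between irreducible cylinders in $A_\infty$ with center $\{0\}$ and divisorial valuations on $A$ with center $\{0\}$, due to Ein--Musta\cedilla{t}\v{a}--Yasuda in characteristic zero and extended by Ishii--Reguera in \cite{ir2} to arbitrary characteristic: each divisor $E$ over $A$ yields a \emph{maximal divisorial set} $C_A(E)\subset A_\infty$, an irreducible cylinder of codimension $k_E+1$, and conversely every irreducible component of a cylinder of $A_\infty$ contained in $\pi^{-1}(0)$ has this form.

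For (i), if $E$ computes $\mld$ and $m_i:=\val_E(\a_i)$, then $C_A(E)\subset \cont^{\geq m_1}(\a_1)\cap\cdots\cap\cont^{\geq m_s}(\a_s)\cap\pi^{-1}(0)$, forcing
$$\mld \;\leq\; s_m(0;A,\a^e)\;\leq\;\codim C_A(E)-\sum_i e_i m_i\;=\;a(E;A,\a^e)\;=\;\mld,$$
so $m$ computes $\mld$. Conversely, given $m$ computing $\mld$, pick an irreducible component $Z$ of the contact locus realising the codimension used in the definition of $s_m$; by the correspondence $Z=C_A(E)$ for some divisor $E$ with $\val_E(\a_i)\geq m_i$ and $k_E+1=\codim Z$, and the reverse chain yields $a(E)=\mld$. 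The case $\mld=-\infty$ is handled identically using the strict-inequality clause of Definition~\ref{smcompute}.

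For (ii), $D_{N,e}\Rightarrow M_{N,e}$ follows from (i) via $k_E+1=\codim C_A(E)\leq N\max_i m_i\leq N|m|$, so a uniform bound on $|m|$ produces one on $k_E$. For $M_{N,e}\Rightarrow D_{N,e}$ with $E$ satisfying $k_E\leq \ell_{N,e}$ and computing $\mld$, when $\mld\geq 0$ the inequality $a(E)\geq 0$ gives $\sum_i e_i\val_E(\a_i)\leq k_E+1\leq\ell_{N,e}+1$, so $m_i:=\val_E(\a_i)$ satisfies $|m|\leq(\ell_{N,e}+1)/\min_i e_i$.

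The main obstacle is the case $\mld=-\infty$ in $M_{N,e}\Rightarrow D_{N,e}$: here $a(E)<0$ bounds $\sum_i e_i\val_E(\a_i)$ only from below, so the canonical choice $m_i=\val_E(\a_i)$ need not be bounded. My plan is to truncate via $m_i:=\min(\val_E(\a_i),M)$ for a threshold $M$ depending only on $N$, $e$, and $\ell_{N,e}$, or, failing that, to concentrate $m$ on the single index $i^\ast$ maximising $e_i\val_E(\a_i)$; since $C_A(E)$ still sits inside the corresponding contact locus, its codimension remains bounded by $k_E+1$, and $s_m<0$ follows once $\sum_i e_i m_i$ exceeds $\ell_{N,e}+1$. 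Verifying that some such bounded truncation is always feasible, uniformly in $\a^e$, is the delicate point.
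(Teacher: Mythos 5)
Your argument for (i) and for the direction $D_{N,e}\Rightarrow M_{N,e}$ coincides with the paper's: the containment $C_A(\val_E)\subset C(m)$ for $m_i=\val_E(\a_i)$ gives the forward implication, and the identification of a minimal-codimension component of the contact locus with a maximal divisorial set $C_A(q\cdot\val_E)$ gives the converse; your bound $k_E+1\leq q(k_E+1)=\codim C(m)\leq N\max_i m_i$ is a legitimate variant of the paper's bound $k_E+1\leq N+\sum e_im_i$ obtained from $\mld\leq N$.

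The one place you stop short is exactly the place you flag: bounding $|m|$ in the direction $M_{N,e}\Rightarrow D_{N,e}$ when $\mld(0;A,\a^e)=-\infty$. Your first proposed fix (truncation) does in fact work, and the verification you left open is short. Set $M>(\ell_{N,e}+1)/\min_i e_i$ and $m_i:=\min(\val_E(\a_i),M)$. If some $\val_E(\a_{i_0})\geq M$, then $e_{i_0}m_{i_0}=e_{i_0}M>\ell_{N,e}+1\geq k_E+1$; otherwise $m_i=\val_E(\a_i)$ for all $i$ and $\sum_ie_im_i=\sum_ie_i\val_E(\a_i)>k_E+1$ because $a(E;A,\a^e)<0$. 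In either case $\sum_ie_im_i>k_E+1$, while $m_i\leq\val_E(\a_i)$ still forces $C_A(\val_E)\subset C(m)$ and hence $\codim(C(m),A_\infty)\leq k_E+1$, so
\begin{equation*}
s_m(0;A,\a^e)\leq k_E+1-\sum_ie_im_i<0,\qquad |m|\leq sM,
\end{equation*}
which is the required uniformly bounded $m$ computing $\mld=-\infty$. This is genuinely different from, and arguably cleaner than, the paper's treatment: there one takes $n$ minimizing $|n|$ among all tuples with $n_i\leq\val_E(\a_i)$ and $k_E+1-\sum_ie_in_i<0$ for some $E$ with $k_E\leq\ell_{N,e}$, decrements one coordinate to get $n^*$, shows $s_{n^*}\geq0$ by contradiction with the minimality (using that a minimal-codimension component of $C(n^*)$ is again divisorial), and deduces $\sum_ie_in_i\leq k_E+1+e_1$. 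The paper's route yields the slightly sharper bound $|m|\leq(\ell_{N,e}+1+\max_ie_i)/\min_ie_i$ versus your $sM$, but both are uniform in $\a^e$, which is all that $D_{N,e}$ requires. So your approach is sound once the two-case check above is supplied; as written, the proposal is incomplete only because that check is announced rather than performed.
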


\begin{proof} 
     We will prove (i) and (ii) together in the following way:
     
 \noindent
 {\bf Step 1.} 
      Assume that a prime divisor $E$ over $A$ computes $\mld(0;A,\a^e)$, 
      then there exists $m\in \ZZ_{\geq0}^s$  computing  $\mld(0;A,\a^e)$.
      
      If moreover $k_E\leq \ell_{N,e}$ holds, then we can take $m$ computing
      $\mld$ such that 
        $$|m|\leq (\ell_{N,e}+1+\max\{e_i\})/\min\{e_i\}.$$
      
\noindent
{\bf Step 2.}
    Assume that $m$ computes   $\mld(0;A,\a^e)$,
    then there exists a prime divisor $E$ over $A$ computing $\mld(0;A,\a^e)$.
    
    If moreover $m$ satisfies $|m|\leq \ell'_{N,e}$,
    then the $E$ obtained above satisfies 
    $$k_E\leq  N-1+\ell'_{N,e}\cdot\max\{e_i\}.$$
    
 \noindent
 [Proof of Step 1]     
      For the divisor $E$ in the statement of Step 1,
        let $m_i:=\val_E \a_i$.
    Then, $$k_E+1-\sum_{i=1}^se_im_i= \mld(0; A,\a^e)\ \ \mbox{or}\ \ <0.$$
    Consider the maximal divisorial set  $C_A(\val _E)\subset A_\infty$ that is the closure of the irreducible subset
    $$\{\gamma\in A_\infty \mid \ord_\gamma=\val_E\}$$
    (see  \cite{DEI} or \cite{isv} for details) .
    Then, as  $m_i=\val_E \a_i$ for every $i$,  $C_A(\val _E)$ 
    is contained in the closed subset 
    $$C(m):=\cont^{\geq m_1}(\a_1)\cap \cont^{\geq m_2}(\a_2)\cap\cdots\cap
    \cont^{\geq m_s}
    (\a_s)\cap\pi^{-1}(0)$$
    of $A_\infty$.
    Then, $$\codim (C_A(\val _E), A_\infty)\geq \codim (C(m), A_\infty).$$
    As $s_m(0;A,\a^e)=\codim (C(m), A_\infty)-\sum e_im_i$ and 
    $\codim (C_A(\val _E), A_\infty)=k_E+1$  
     (it is proved in \cite{DEI} for characteristic $0$ and in \cite{ir2} for positive characteristic)
       we obtain
 \begin{equation}\label{smke}
    s_m(0;A,\a^e)\leq k_E+1-\sum e_im_i,
    \end{equation}
    As $E$ computes $\mld(0;A,\a^e)$, the right hand side of the above
    inequality is either $\mld(0;A,\a^e)$ or negative.
    This shows that $m$ computes $\mld(0;A,\a^e)$.
    
   Now we assume $k_E\leq \ell_{N,e}$.
   First we consider the case $\mld(0; A,\a^e)\geq0$.
   Then by (\ref{smke}), we obtain
     $$\ell_{N,e}+1-\mld(0;A,\a^e)\geq k_E+1-\mld(0;A,\a^e)=\sum e_im_i\geq \min_i\{e_i\}|m|,$$
   and it shows $$|m|\leq \frac{\ell_{N,e}+1}{\min_i\{e_i\}}.
   $$

   Next we consider the case $\mld(0; A,\a^e)=-\infty$.
   For a prime divisor $E$ computing 
     $\mld(0;A,\a^e)$ 
     we have 
     $$k_E+1-\sum_ie_i\val_E(\a_i)<0.$$
     But  even for some choices of $n=(n_1,\ldots,n_s)\in \ZZ_{\geq0}$ with $n_i\leq \val_E(\a_i)$ $(\forall i)$ the following inequality may hold:
     $$k_E+1-\sum_ie_in_i<0.$$
     Considering this fact, we define
     $${\mathcal D}:=\left\{ E  \left|  \begin{array}{l}\mbox{prime\ divisor\ computing }\mld(0;A,\a^e)\\
                                                                    \mbox{and\ satisfying\ }k_E\leq \ell_{N,e}\\
                                          \end{array}\right.\right\}    $$      
      and let $n\in \ZZ_{\geq0}^s$ attain the minimal value:
      $$\min\left\{ |n|  \ \left| \ \begin{array}{l} E\in {\mathcal D},\  n_i\leq \val_E(\a_i) \  (\forall i)\\
                                                                                  k_E+1-\sum_ie_in_i<0\\
                                                      \end{array}\right.\right\}.$$                               
      Let $E\in \mathcal D$ give the minimal value $|n|$.                                    
     Here, we may assume that $n_1\geq 1$.
     Now, define $n^*=(n^*_1,\ldots, n^*_s)$, so that  $n^*_1=n_1-1$ and $n^*_i=n_i$ $(i\neq 1)$.
     
\noindent
{\bf Claim.}  $s_{n^*}(0;A,\a^e)\geq 0.$

       Considering the definition of $s_{n^*}(0;A,\a^e)$, 
       we have an irreducible component 
       $C\subset C(n^*)$
       such that $$\codim(C, A_\infty)=\codim( C(n^*), A_\infty).$$
       It is known that a finite codimensional irreducible component of the intersection of finite 
       number of contact loci 
       is a maximal divisorial set (\cite[Corollary 3.16]{ir2}), 
       therefore  $C=C_A(q\cdot\val_{E'})$ for some $q\in \NN$ and a prime
       divisor $E'$ over $A$ with the center $\{0\}$.
       Therefore,  as $\codim(C_A(q\cdot\val_{E'}), A_\infty)=q(k_{E'}+1)$
       (it is proved in \cite{DEI} for characteristic $0$ and in \cite{ir2} for positive characteristic)
        we obtain
   \begin{equation}\label{minimal}    
       q(k_{E'}+1)-\sum e_in^*_i=s_{n^*}(0;A,\a^e).
   \end{equation}    
       Assume that this value is negative.
       Then, by (\ref{minimal}) and $q\cdot\val_{E'}(\a_i)\geq n^*_i$, we have
       $$q\left(k_{E'}+1-\sum e_i\val_{E'}(\a_i)\right)\leq q(k_{E'}+1)-\sum e_in^*_i=s_{n^*}(0;A,\a^e)<0,$$
       which implies that $E'$ computes $\mld(0;A,\a^e)$.
       On the other hand, by $n^*_i\leq n_i$ for every $i$,
       we have $C_A(\val_E)\subset C(n)\subset C(n^*)$,
       which yields 
       $$\ell_{N,e}+1\geq k_E+1=\codim (C_A(\val_E),A_\infty)\geq \codim (C(n^*), A_\infty)$$
       $$=\codim (C_A(q\cdot\val_{E'}), A_\infty)=q(k_{E'}+1)\geq k_{E'}+1.$$
       Thus $E'$ computes $\mld(0;A,\a^e)=-\infty$,  
       $k_{E'}\leq \ell_{N,e}$, $ k_{E'}+1-\sum_ie_in^*_i<0$ and
       $|n^*|<|n|$, which is a contradiction to the minimality of $|n|$.
       
       Now, as we showed the claim $s_ {n^*}(0;A,\a^e)\geq 0$ in the above discussion,
       we obtain $q(k_{E'}+1)-\sum e_in^*_i\geq 0$.
       Here, as we see above $k_E+1\geq q(k_{E'}+1)$, we obtain
       $$k_E+1-\sum e_in_i+e_1=k_E+1-\sum e_in^*_i\geq 0,$$
       which yields
       $$l_{N,e}+1+e_1\geq \sum e_in_i\geq\min\{e_i\}|n|.$$
       Now we obtain that $n\in \ZZ_{\geq0}^s$ computes $\mld(0;A,\a^e)$ 
        and satisfies \\
         $|n|\leq (\ell_{N,e}+1+e_1)/\min\{e_i\}$.
        Hence, to conclude the proof of Step 1,
        we can take $$\ell'_{N,e}:=(\ell_{N,e}+1+\max\{e_i\})/\min\{e_i\}.$$ 
        
\vskip.5truecm
\noindent
[Proof of Step 2]
      Assume that $m$ computes the minimal log discrepancy.
   Take a maximal divisorial set $C_A(q\cdot \val_E)\subset C(m)$ which gives  
   $\codim(C(m), A_\infty)$.
   Then 
   $$\codim ( C_A(q\cdot \val_E), A_\infty)=s_m(0;A,\a^e)+\sum e_im_i.$$ 
   Here,  
   we have $\codim (C_A(q\cdot \val_E),A_\infty)=q(k_E+1)$.
   On the other hand, since $C_A(q\cdot \val_E)\subset C(m)$, it follows
   $q\cdot \val_E(\a_i)\geq m_i$.
     Then, we obtain
\begin{equation}\label{kesm}     
     q(k_E+1)-\sum e_i(q\cdot\val_E(\a_i))\leq q(k_E+1)  -\sum e_im_i=s_m(0;A,\a^e),
\end{equation}     
    which implies that $E$ computes $\mld(0; A,\a^e)$ in both cases $\mld \geq 0$ and
    $\mld=-\infty$.
\vskip.5truecm
   Now, assume $m\in \ZZ_{\geq0}^s$ computes $\mld(0;A,\a^e)$ and satisfies 
     $|m|\leq \ell'_{N,e}$.
     Take a prime divisor $E$ over $A$ as above.
     
     First we consider the case $\mld(0; A,\a^e)\geq0$.
     Then (\ref{kesm}) implies $q=1$ and the following:
\begin{equation}\label{boundbyN}
     k_E+1-\sum e_i\val_E(\a_i)\leq k_E+1-\sum e_im_i=s_m(0;A,\a^e)=\mld(0;A,\a^e)\leq N,
\end{equation}     
     which yields that $E$ computes $\mld(0;A,\a^e)$ and 
     $$k_E\leq N-1+\ell'_{N,e}\cdot\max\{e_i\}.$$ 
      
      Next we consider the case $\mld(0;A,\a^e)=-\infty$,
     then by (\ref{kesm}), we have
     $$q\cdot(k_E+1-\sum e_i \val_E(\a_i)) \leq q(k_E+1)-\sum e_im_i=s_m(0;A,\a^e)<0,$$
     which also yields that $E$ computes $\mld(0;A,\a^e)$
     and $$k_E\leq \ell'_{N,e}\cdot\max\{e_i\}-1     .$$ 
     In the both cases we obtain a bound
     $$k_E\leq N-1+\ell'_{N,e}\cdot\max\{e_i\}.$$ 
 \end{proof}
%

   Up to now we discussed about minimal log discrepancies of a pair,
   In the rest of this section we  discuss about log canonical threshold
   which also evaluates ``goodness" of the singularities of a pair.
   
  To show the statements on log canonical threshold, we remind us the definition of
   log canonical threshold $\lct(A, \a)$ for a nonsingular variety $A$ and a coherent ideal
   sheaf $\a$ on $A$ and some basic properties.

\begin{defn} Let $A$ be a nonsingular variety defined over an algebraically closed field
      and $\a$ a nonzero coherent ideal sheaf on $A$.
       {\sl Log canonical threshold} $\lct(x; A, \a)$ at a closed point $x\in A$ is defined as follows:
       $$\lct(x;A,\a):=\sup\{r\in \RR_{\geq0} \mid (A, \a^r)\ \ \mbox{is\ log\ canonical
       \ at\ } x\}.$$
       Note that 
       $$\lct(x;A,\a)=\inf \left\{ \frac{k_E+1}{\val_E(\a)} \mid E\ \mbox{prime\ divisor\ over\ }
       A \ \mbox{with\ } c_A(E)\ni x \right\}.$$
       Let $E$ be a prime divisor  over $A$ with the center containing $x$.
       We say that  $E$
       {\sl computes}  $\lct(x;A,\a)$ if 
       $$\frac{k_E+1}{\val_E(\a)}=\lct(x;A,\a)$$
       holds.
\end{defn} 

\begin{rem}
\begin{enumerate}
\item
      Log canonical threshold is defined for any klt variety, but in this paper we only 
     work on the origin $0$ on the affine space $A=\AA_k^N$.
\item 
     If $(A, \a)$ has a log resolution $\varphi:\widetilde A\to A$, then 
     $$\lct(0;A,\a)=\min \left\{ \frac{k_E+1}{\val_E(\a)} \mid E\ \mbox{ prime\ divisor\ on\ }
      \widetilde{A}\ \mbox{with}\  \varphi(E)\ni 0\ \right\}.$$
      Therefore, in this case there is a prime divisor over $A$ computing $\lct(0;A,\a)$.
      But, up to this moment, for the base field of characteristic $p>0$
      the existence of a prime divisor computing log canonical threshold is not known in general.

\end{enumerate}

\end{rem}

\begin{prop}[{\cite{must}} for characteristic $0$ and {\cite{zhu}} for positive characterisitic]
\label{lct-by-jet}
     Let $A=\AA_k^N$, $0$ and $\a$ as above, then there is an interpretation of log canonical threshold by
     jet schemes as follows:
     $$\lct(0;A,\a)=\inf_{m\in \NN} \frac{\codim_0(Z(\a)_m, A_m)}{m+1},$$
      where $\codim_0(Z(\a)_m, A_m)=\codim((Z(\a)\cap U)_m, A_m)$ 
      for sufficiently small neighborhood $U$ of $0$.
      Note that the right hand side is constant for sufficiently small $U$ and 
      in particular if $\surd\overline\a=\m$ for the maximal ideal $\m$ defining $0$ in $A$,
      we have  
      $$\codim_0(Z(\a)_m, A_m)=\codim(Z(\a)_m, A_m)=\codim (\cont^{\geq m+1}(\a),
      A_\infty).$$
 \end{prop}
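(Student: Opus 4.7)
The plan is to bridge jet-scheme codimensions and divisorial valuations through the arc space. The key preliminary identity, for the smooth variety $A=\AA_k^N$, is that $\psi_m^{-1}(Z(\a)_m)=\cont^{\geq m+1}(\a)$; since $\psi_m$ is surjective with affine-space fibres, one has
$$\codim_0(Z(\a)_m,A_m)=\codim_0\bigl(\cont^{\geq m+1}(\a),A_\infty\bigr).$$
When $\sqrt{\a}=\m$, every arc $\gamma$ with $\ord_\gamma(\a)\geq m+1\geq 1$ is automatically centered at $0$, so the localization at $0$ is unnecessary; this yields the last assertion of the statement. Thus the main task is to prove
$$\lct(0;A,\a)=\inf_{m\in\NN}\frac{\codim_0\!\bigl(\cont^{\geq m+1}(\a),A_\infty\bigr)}{m+1}.$$

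For the inequality $\inf_m \leq \lct(0;A,\a)$, I would fix a prime divisor $E$ over $A$ with $0\in c_A(E)$ and set $q=\val_E(\a)$. The maximal divisorial set $C_A(\val_E)\subset A_\infty$ lies in $\cont^{\geq q}(\a)\cap\pi^{-1}(c_A(E))$, and by the codimension formula $\codim(C_A(\val_E),A_\infty)=k_E+1$ (proved in \cite{DEI} for characteristic $0$ and extended to positive characteristic in \cite{ir2}), specializing to $m=q-1$ gives
$$\frac{\codim_0(Z(\a)_{q-1},A_{q-1})}{q}\leq\frac{k_E+1}{q}=\frac{k_E+1}{\val_E(\a)}.$$
Taking the infimum over all such $E$ produces the desired inequality.

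For the reverse direction, I would fix $m\in\NN$, shrink to a small neighbourhood $U$ of $0$, and pick an irreducible component $C$ of $\cont^{\geq m+1}(\a)\cap\pi^{-1}(U)$ realizing the codimension $\codim_0(Z(\a)_m,A_m)$. By the structure theorem for finite-codimensional components of contact loci (Corollary 3.16 of \cite{ir2}, already invoked in the preceding section), $C=C_A(q\cdot\val_E)$ for some prime divisor $E$ over $A$ and integer $q\geq 1$, with $q\,\val_E(\a)\geq m+1$ and $c_A(E)\cap U\neq\emptyset$. Using $\codim(C_A(q\val_E),A_\infty)=q(k_E+1)$ one gets
$$\frac{\codim_0(Z(\a)_m,A_m)}{m+1}=\frac{q(k_E+1)}{m+1}\geq\frac{k_E+1}{\val_E(\a)}\geq\lct(0;A,\a),$$
and taking the infimum over $m$ finishes the proof. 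Independence of the right-hand side for sufficiently small $U$ is a consequence of the same structure theorem: only finitely many divisorial components contribute to each finite codimension, and for $U$ small enough only those centered at $0$ appear.

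The main obstacle in this strategy is not the arc-theoretic algebra above, which is formal once the tools are in place, but rather the two inputs that carry all the content: the structural identification of finite-codimensional components of contact loci with maximal divisorial sets, and the codimension formula $\codim(C_A(q\val_E),A_\infty)=q(k_E+1)$. In characteristic $0$ both rely on a log resolution of $(A,\a)$, while in positive characteristic neither is available in the classical form; the substantive point of the positive-characteristic version (\cite{zhu}, \cite{ir2}) is precisely to establish these two facts by resolution-free jet-theoretic arguments, after which the proof scheme above goes through uniformly.
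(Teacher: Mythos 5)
The paper does not actually prove Proposition \ref{lct-by-jet}; it is stated as a citation to \cite{must} (characteristic $0$) and \cite{zhu} (positive characteristic), and the text moves on immediately to the definition of $z_m$. So there is no in-paper proof to compare against. Evaluated on its own, your argument is correct and is built from exactly the two ingredients the paper already leans on elsewhere (in the proof of Proposition \ref{equiv} and of Lemma \ref{zm-ke}): the codimension formula $\codim(C_A(q\val_E),A_\infty)=q(k_E+1)$ from \cite{DEI}/\cite{ir2}, and the identification of finite-codimensional components of contact loci with maximal divisorial sets (\cite[Cor.\ 3.16]{ir2}). Both directions of your inequality are correctly set up: for $\inf_m\le\lct$, setting $m=\val_E(\a)-1$ and using $C_A(\val_E)\subset\cont^{\ge m+1}(\a)$ with $C_A(\val_E)\cap\pi^{-1}(U)\neq\emptyset$ because $0\in c_A(E)$; for $\lct\le\inf_m$, taking a component $C=C_A(q\val_E)$ realizing $\codim_0$ and using $q\val_E(\a)\ge m+1$. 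Your reduction from $\codim_0(Z(\a)_m,A_m)$ to $\codim_0(\cont^{\ge m+1}(\a),A_\infty)$ via surjectivity and affine-space fibres of $\psi_m$ is the right observation, and the remark about $\sqrt{\a}=\m$ forcing arcs into $\pi^{-1}(0)$ correctly disposes of the localization.

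One point worth tightening: your closing justification for the stabilization of the right-hand side ``for $U$ small enough only those centered at $0$ appear'' conflates ``center equal to $\{0\}$'' with ``center containing $0$''. What you actually need, and what the argument delivers, is that the relevant $E$ has $0\in c_A(E)$ (possibly with $c_A(E)$ positive-dimensional), which is precisely the condition in the definition of $\lct(0;A,\a)$. The finiteness you invoke (only finitely many $C_A(q\val_E)$ contribute to a given finite codimension) is correct and is what guarantees that shrinking $U$ eventually excludes all centers not passing through $0$. With that phrasing fixed, the argument is clean. Your concluding paragraph accurately locates the real content (the resolution-free derivation of the two structural facts in positive characteristic, due to \cite{ir2} and \cite{zhu}), which is indeed why the paper is content to cite the result rather than reprove it.
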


\begin{defn}  We define a function $z_m(0;A,\a)$ on $m\in \NN$ as follows:
 $$z_m(0;A,\a):= \frac{\codim_0(Z(\a)_m, A_m)}{m+1}.$$
    Then, as in Proposition \ref{lct-by-jet}, 
     $$\lct(0;A,\a)=\inf_{m\in \NN}z_m(0;A,\a).$$
\end{defn}

   
\begin{lem}\label{zm-ke}
   Let $k$ be an algebraically closed field of arbitrary characteristic
   and $A=\AA_k^N$ the affine space of dimension $N\geq1$ defined over $k$.
   Let $\m$ be the defining ideal of the origin $0\in A$
   and $\mu$  a positive integer.
   Define the set $\mathcal{I}_\mu$ of ideals as follows:
     $$\mathcal{I}_\mu:=\{\a\ \mbox{ideal\ on}\ A \mid \m^\mu\subset \a\subset \m\}.$$
    Then the following are equivalent:
\begin{enumerate}
   \item[(i)] There
     is a positive integer $L_{N,\mu}$ (depending only on $N$ and $\mu$) such that for every 
          ideal $\a\in \mathcal{I}_\mu$,
    $\lct(0; A, \a)$ is computed by a prime divisor $E$ over $A$  with $k_E\leq L_{N,\mu}$.    
    \item[(ii)] There
     is a positive integer $L'_{N,\mu}$ (depending only on $N$ and $\mu$) such that for every 
          ideal $\a\in \mathcal{I}_\mu$, $z_m(0; A, \a)$ computes $\lct(0; A, \a)$ 
          (i.e.,
    $\lct(0; A, \a)=z_m(0; A, \a)$ holds) for some $m\leq L'_{N,\mu}$.
\end{enumerate}    
\end{lem}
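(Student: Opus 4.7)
The plan is to prove the equivalence by adapting the two-step argument of Proposition~\ref{equiv} (for minimal log discrepancies) to the log canonical threshold, using the infimum description $\lct(0;A,\a)=\inf_m z_m(0;A,\a)$ of Proposition~\ref{lct-by-jet}, the codimension formula $\codim(C_A(q\cdot\val_E),A_\infty)=q(k_E+1)$ for maximal divisorial sets, and the fact (\cite[Corollary 3.16]{ir2}) that every finite-codimension irreducible component of a contact locus is a maximal divisorial set. The $\m$-primary condition $\m^\mu\subset\a\subset\m$ will be used twice: once to bound $\val_E(\a)$ in terms of $k_E$, and once to bound $\lct(0;A,\a)$ by $N$.

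For the implication $(i)\Rightarrow(ii)$, suppose $E$ computes $\lct(0;A,\a)$ with $k_E\le L_{N,\mu}$. I would set $m:=\val_E(\a)-1$ and observe that $C_A(\val_E)\subset\cont^{\geq m+1}(\a)$, which gives $\codim(\cont^{\geq m+1}(\a),A_\infty)\le k_E+1$. Dividing by $m+1=\val_E(\a)$ and recalling $\codim_0(Z(\a)_m,A_m)=\codim(\cont^{\geq m+1}(\a),A_\infty)$ (valid because $\sqrt{\a}=\m$), one gets $z_m(0;A,\a)\le(k_E+1)/\val_E(\a)=\lct(0;A,\a)$, and the reverse inequality is automatic, so $z_m$ computes $\lct$. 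To bound $m$, use $\m^\mu\subset\a$, which gives $\val_E(\a)\le\mu\,\val_E(\m)$, together with the inclusion $C_A(\val_E)\subset\cont^{\geq\val_E(\m)}(\m)$ and $\codim(\cont^{\geq j}(\m),A_\infty)=jN$, yielding $\val_E(\m)\le(k_E+1)/N$. Hence $m<\val_E(\a)\le\mu(L_{N,\mu}+1)/N=:L'_{N,\mu}$.

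For the implication $(ii)\Rightarrow(i)$, suppose $m\le L'_{N,\mu}$ satisfies $z_m(0;A,\a)=\lct(0;A,\a)$. Pick an irreducible component of $\cont^{\geq m+1}(\a)$ realising the codimension; by \cite[Corollary 3.16]{ir2} it is a maximal divisorial set $C_A(q\cdot\val_E)$ for some $q\in\NN$ and prime divisor $E$ over $A$ with center $\{0\}$. Then $q(k_E+1)=\codim(\cont^{\geq m+1}(\a),A_\infty)=(m+1)\lct(0;A,\a)$, and the inclusion $C_A(q\cdot\val_E)\subset\cont^{\geq m+1}(\a)$ gives $q\,\val_E(\a)\ge m+1$. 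Combining these two relations yields $(k_E+1)/\val_E(\a)\le\lct(0;A,\a)$; the reverse inequality is automatic, so $E$ computes $\lct$. Finally $k_E+1\le(m+1)\lct(0;A,\a)\le(m+1)N\le(L'_{N,\mu}+1)N$, where $\lct(0;A,\a)\le\lct(0;A,\m)=N$ follows from $\a\subset\m$, so one may take $L_{N,\mu}:=(L'_{N,\mu}+1)N-1$.

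The only genuinely delicate point is the use of maximal divisorial sets to extract the divisor $E$ in step $(ii)\Rightarrow(i)$ and the codimension identities $\codim(C_A(q\cdot\val_E),A_\infty)=q(k_E+1)$ and $\codim(\cont^{\geq j}(\m),A_\infty)=jN$ in positive characteristic; these are exactly the results imported from \cite{ir2} that make the characteristic-$0$ proof go through unchanged, so once they are cited the remainder of the argument is a short accounting exercise with the $\m$-primary bounds.
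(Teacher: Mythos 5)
Your proposal is correct and follows essentially the same two-step argument as the paper: in (i)$\Rightarrow$(ii) you pass from $E$ to $m=\val_E(\a)-1$ via $C_A(\val_E)\subset\cont^{\geq m+1}(\a)$, and in (ii)$\Rightarrow$(i) you extract $E$ from a maximal divisorial component $C_A(q\cdot\val_E)$ of the contact locus, with the same bounds $L'_{N,\mu}=\mu(L_{N,\mu}+1)/N$ and $L_{N,\mu}=(L'_{N,\mu}+1)N-1$. The only cosmetic difference is that you bound $m$ via $\val_E(\a)\le\mu\,\val_E(\m)\le\mu(k_E+1)/N$, whereas the paper writes the equivalent bound $m\le(L_{N,\mu}+1)/\lct(0;A,\m^\mu)-1$.
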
   

\begin{proof} First we prove (i) $\Rightarrow$ (ii).
      By the assumption (i), 
      for an ideal $\a\in \mathcal{I}_\mu$
      there is a prime divisor $E$ over $A$ with  $k_E\leq L_{N,\mu}$
       such that $E$ computes $\lct(0;A,\a)$.
      Let $\val_E\a=m+1$, then we have 
\begin{equation}\label{minimalE}      
      \lct(0;A,\a)=\frac{k_E+1}{m+1}\leq \frac{L_{N,\mu}+1}{m+1}.
 \end{equation}     
   Here, as $\val_E\a=m+1$, we have 
    $$C_A(\val_E)\subset \cont^{\geq m+1}(\a).$$
    Then considering the codimensions in $A_\infty$ of the both hand sides,
    we obtain
    $$k_E+1\geq \codim(\cont^{\geq m+1}(\a), A_\infty).$$
    Therefore, 
    $$\frac{k_E+1}{m+1}\geq z_m(0;A,\a).$$
    Here, as the left hand side is $\lct(0;A,\a)$ and the right hand side is not less than $\lct(0;A,\a)$,
     the equality holds:
    $$\lct(0;A,\a)=z_m(0;A,\a)$$ 
     and by (\ref{minimalE}) the value $m$ satisfies
    $$m=\frac{k_E+1}{\lct(0;A,\a)}-1\leq \frac{L_{N, \mu}+1}{\lct(0;A,\a)}-1\leq \frac{L_{N, \mu}+1}{\lct(0;A,\m^\mu)}-1,$$
    where note that the last term does not depend on the choice of $\a$.
  
    Next we prove (ii) $\Rightarrow$ (i).
    Take an ideal $\a\in \mathcal{I}_\mu$.
    As we assume (ii), there exists $m\leq L'_{N,\mu}$ such that 
\begin{equation}\label{computezm}      
      z_m(0;A,\a)=\lct(0;A,\a)
 \end{equation} 
    Take an irreducible component 
\begin{equation}\label{inclusion}    
    C_A(q\cdot\val_E)\subset \cont^{\geq m+1}(\a)
\end{equation}
    such that $\codim  (C_A(q\cdot\val_E),A_\infty)=\codim ( \cont^{\geq m+1}(\a), A_\infty)$.
    Then we have $$\frac{q(k_E+1)}{m+1}=z_m(0;A,\a).$$
    On the other hand, by the inclusion (\ref{inclusion}), it follows
    $q\cdot\val_E\a\geq m+1$.
    Then, $$\lct(0;A,\a)\leq \frac{k_E+1}{\val_E\a}=\frac{q(k_E+1)}{q\cdot\val_E\a}
    \leq \frac{q(k_E+1)}{m+1}=\lct(0;A,\a).$$
    Hence, $E$ computes $\lct(0;A,\a)$ and $k_E+1\leq (m+1)\cdot\lct(0;A,\a)\leq (L'_{N,\mu}+1)N$.
    Thus we obtain a uniform bound
    $$k_E\leq  (L'_{N,\mu}+1)N-1.$$

\end{proof}
   

\vskip1truecm
\section{Proofs of the main results and a problem}

%
%
%
\noindent     
In this section, we will prove the main theorems stated in the section one
and give a problem whose affirmative answer would give descent of the statements
to the whole set of pairs $(\AA_k^N,\a^e)$ of positive
characteristic.

\begin{lem}\label{keylemma}
   Let $k$ be an algebraically closed field of characteristic $p>0$,
   $(\AA_k^N, \a^e)$ a pair and $E$ a toric prime divisor over $\AA_k^N$ with the center
   at $0$.
   Then, there are a prime divisor $\we$ over $\AA_\CC^N$ with the center at $0$
    and multiideal $\ta^e$
    such that 
     $$k_E=k_\we, \ \ \val_E\a_i=\val_\we\ta_i,\ \ \  \mbox{and} \ \ 
   \ta_i (\mod p)=\a_i\  \mbox{ for\ all\ } i,$$
  where $\ta^e=\ta_1^{e_1}\cdots\ta_s^{e_s}$
   and $\a^e=\a_1^{e_1}\cdots\a_s^{e_s}$.

\end{lem}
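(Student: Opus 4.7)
The key observation driving the proof is that a toric prime divisor is a monomial valuation, so it depends only on the monomial support of elements, not on their coefficients. The plan is therefore to transport the combinatorial data of $E$ across characteristics while using Propositions \ref{kurano} and \ref{polynomial1} to lift the coefficients of generators of the ideals $\a_i$.

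First, since $E$ is toric, there is a vector $w=(w_1,\ldots,w_N)\in \ZZ_{\geq 0}^N$ (which we may take primitive) such that $\val_E(f)=\min\{\langle w,u\rangle \mid X^u\in f\}$. I would define $\we$ to be the toric prime divisor over $\AA_\CC^N$ corresponding to the same ray $\RR_{\geq 0}w$, so that $\val_\we(g)=\min\{\langle w,u\rangle \mid X^u\in g\}$. Both $E$ and $\we$ live on the toric variety determined by the fan containing $\RR_{\geq 0}w$, which can be defined over any field; in particular the log discrepancy is computed from the combinatorics of $w$ alone, giving $k_E = k_\we = \sum_i w_i - 1$ immediately.

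Next, for each $i$ choose generators $f_{i,1},\ldots,f_{i,r_i}\in k[X_1,\ldots,X_N]$ of $\a_i$ and apply the construction in Proposition \ref{polynomial1}(i): collect all nonzero coefficients $\{\alpha_\nu\}$ of these generators, lift them to $\{\widetilde{\alpha}_\nu\}\subset \CC$ via Proposition \ref{kurano}, and define $g_{i,j}$ by replacing each $\alpha_\nu$ in $f_{i,j}$ with $\widetilde{\alpha}_\nu$, leaving the exponents untouched. Set $\ta_i := (g_{i,1},\ldots,g_{i,r_i})\subset \CC[X_1,\ldots,X_N]$, so that by construction $\ta_i\ (\mod p)=\a_i$.

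The crucial—and only slightly subtle—point, which I view as the main step to verify, is that the lift preserves monomial supports exactly. This follows because the coefficient lifting is injective on nonzero elements: if $\alpha_\nu\ne 0$ in $k$ then $\widetilde{\alpha}_\nu\ne 0$ in $\CC$, for otherwise $\widetilde{\alpha}_\nu\ (\mod p)=0\ne\alpha_\nu$. Hence the monomials appearing in $g_{i,j}$ are exactly those appearing in $f_{i,j}$. Since $E$ and $\we$ are both given by the same weight vector $w$, this support-preservation yields
$$\val_E(f_{i,j})=\min\{\langle w,u\rangle \mid X^u\in f_{i,j}\}=\min\{\langle w,u\rangle \mid X^u\in g_{i,j}\}=\val_\we(g_{i,j}),$$
and taking minima over $j$ gives $\val_E(\a_i)=\val_\we(\ta_i)$ for each $i$, completing the proof. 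The center of $\we$ over $\AA_\CC^N$ is the origin because $w\in \ZZ_{\geq 0}^N\setminus\{0\}$, matching the center of $E$.
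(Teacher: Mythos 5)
Your proof is correct and follows essentially the same approach as the paper: take $\we$ to be the toric divisor over $\AA_\CC^N$ for the same weight vector $w$, lift coefficients via Propositions \ref{kurano} and \ref{polynomial1}, and match valuations. One small stylistic difference: the paper first takes an arbitrary lift $\of$ of each generator and then truncates away the part of weight $<d$ (which must reduce to $0$ mod $p$), whereas you argue directly that the explicit construction of Proposition \ref{polynomial1}(i) preserves monomial supports exactly (nonzero coefficients lift to nonzero, zero coefficients stay zero), so no truncation is needed; both are valid, and yours is a touch cleaner. One small inaccuracy to fix: for $E$ (and hence $\we$) to have center $\{0\}$ you need every component $w_i>0$, i.e. $w\in\ZZ_{>0}^N$, not merely $w\in\ZZ_{\geq0}^N\setminus\{0\}$ (if some $w_i=0$ the center is a positive-dimensional coordinate subspace); this is how the paper's proof uses $w$ as well.
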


\begin{proof} Let the toric prime divisor $E$ over $\AA_k^N$ correspond to the vector $w=(w_1,\ldots,w_N)\in 
   \ZZ_{>0}$.
   Let $\we$ be the toric  prime divisor over $\AA_\CC^N$ corresponding to $w$.
   Then, $$k_E+1=\langle w, {\bf1}\rangle =k_\we+1,$$
   where ${\bf1}=(1,1,\ldots,1)$.

   For our lemma, it is sufficient to construct a lifting $\ta_i$  of $\a_i$ $(i=1,\ldots, s)$ such that 
   $$\val_E\a_i=\val_\we\ta_i.$$
   First we will show that for a polynomial $f\in k[X_1,\ldots, X_N]$ there exists 
   $\tf\in \CC[X_1,\ldots,X_N] $ such that
   $$\tf (\mod p)=f \ \ \mbox{and}\ \ \val_Ef=\val_\we\tf.$$
   Express $$f=\sum_{u\in \ZZ_{\geq0}^N}\alpha_uX^u.$$
   Let $d:=\val_E f=\min_{\alpha_u\neq 0}\langle w,u\rangle$.
   By Proposition \ref{polynomial1}, there exists a lifting $\of=\sum_{u\in \ZZ_{\geq0}}
   \widetilde{\alpha}_uX^u\in \CC[X_1,\ldots, X_N] $ of $f$.
   Divide $\of$ into the sum of two polynomials
   $$\of=\tf_{<d}+\tf_{\geq d},$$
    where $\tf_{<d}=\sum_{\langle w,u\rangle<d}
   \widetilde{\alpha}_uX^u$ and $\tf_{\geq d}=\sum_{\langle w,u\rangle\geq d}
   \widetilde{\alpha}_uX^u$.
    Note that $\tf_{<d} (\mod p)=0$, as $\of (\mod p)=f$ and $\val_E f=d$.
    Define $\tf:=\tf_{\geq d}$.
    Then, we obtain that 
    $$\tf (\mod p)=f\ \mbox{and}$$
     $$\val_\we\tf=\min_{X^u\in \tf}\langle w,u\rangle = d=\val_E f.$$
Next we apply the discussion above to the ideals.
  For each generator $f_{ij}$ of $\a_i$ we
   obtain a lifting $\tf_{ij}$ to characteristic $0$  by the above procedure.
   Here, note that we can take them in a common subring supporting the liftings.
   Let them generate an ideal $\ta_i\subset \wr_0$.
   Then we obtain that
   $$\ta_i (\mod p)=\a_i.$$ 
   If $f_{i1}$ computes $\val_E\a_i$ then $\tf_{i1}$ computes $\val_\we\ta_i$
   by the definition of $\ta_i$ and we obtain
   $$\val_E \a_i=\val_E f_{i1}=\val_\we\tf_{i1}=\val_\we\ta_i,$$
  which completes the proof.
\end{proof}

\vskip.5truecm
\noindent    
{\bf [Proof of Theorem \ref{mainthm}]}
      Let $k$ be an algebraically closed field of characteristic $p>0$.
    We divide our discussion into two cases  (a) $\mld(0; \AA_k^N, \a^e)\geq 0$ and \\
    (b) $\mld(0; \AA_k^N, \a^e)=-\infty$.

    In case (a),  let $\delta:=\mld(0; \AA_k^N, \a^e)\geq 0$.
    Since $( \AA_k^N,\a^e)\in \mathcal M_k$,
    for every $\epsilon>0$ there exists a toric prime divisor $E_\epsilon$ over $\AA_k^N$
    with the center at $\{0\}$ such that
    $$ a(E_\epsilon ;\AA_k^N,\a^e) -\delta   <\epsilon.$$
    Then, by Lemma \ref{keylemma}, there exist  a multiideal $\ta_\epsilon$ in $\CC[X_1,
    \ldots, X_N]$ and a prime divisor $\we_\epsilon$ over $\AA_{\CC}^N$ 
    such that 
\begin{equation}\label{epsilon1}    
    a(E_\epsilon;\AA_k^N,\a^e)= a(\we_\epsilon;\AA_{\CC}^N,\ta_\epsilon^e).
\end{equation}      
   Here, as we assume that $(M_{N,e})$ holds for the base field $\CC$,
   $(D_{N,e})$ also holds for $\CC$ by Proposition \ref{equiv}.
   Therefore, there exists $m_\epsilon\in \ZZ_{\geq0}^s$ such that 
   $|m_\epsilon|\leq \ell'_{N,e}$
   and 
\begin{equation}\label{epsilon2}
    s_{m_\epsilon}(0;\AA_{\CC}^N,\ta_\epsilon^e)= \mld(0;\AA_{\CC}^N,\ta_   
    \epsilon^e).
\end{equation}  

     On the other hand, by Lemma \ref{localjet}, we have: 
\begin{equation}\label{epsilon3}     
      s_{m_\epsilon}(0;\AA_{\CC}^N,\ta_\epsilon^e)
     \geq  s_{m_\epsilon}(0;\AA_{k}^N,\a^e).
\end{equation}    
 
   By composing (\ref{epsilon1}), (\ref{epsilon2}) and (\ref{epsilon3}), we have
   $$\delta+\epsilon> a(E_\epsilon;\AA_k^N,\a^e)
         = a(\we_\epsilon;\AA_{\CC}^N,\ta_\epsilon^e)
     \geq\mld(0;\AA_{\CC}^N,\ta_\epsilon^e)$$
         $$=s_{m_\epsilon}(0;\AA_{\CC}^N,\ta_\epsilon^e)
         \geq   s_{m_\epsilon}(0;\AA_{k}^N,\a^e)\geq \delta .$$   
     Therefore we have  a sequence $\{m_\epsilon\}_\epsilon $ 
     \begin{center}
     such that
      $|m_\epsilon|<\ell'_{N,e}$ and $| s_{m_\epsilon}(0;\AA_{k}^N,\a^e)-\delta|<\epsilon$.        
   \end{center}
    Since the set $\{m\in \ZZ_{\geq0}^s\mid |m|\leq \ell'_{N,e}\}$ is finite,
     we have that 
    $m_\epsilon=m$ (constant) for sufficiently small $\epsilon$, 
    and $$s_{m}(0;\AA_{k}^N,\a^e)=\delta.$$
    Therefore, we obtain that this $m$ satisfies $|m|\leq \ell'_{N,e}$ and  computes $\mld(0;\AA_{k}^N,\a^e)$.
    By Proposition \ref{equiv}, this proves $(M_{N,e})$ holds in $\mathcal M_k$
     and also there exists a prime divisor over $\AA_k^N$
    that computes $\mld(0;\AA_k^N,\a^e)$.

   \vskip.5truecm
    In case (b) $\mld(0; \AA_k^N, \a^e)=-\infty$, by the definition of $\mathcal M_k$,
    there exists a toric prime divisor $E$ over $\AA_k^N$
   with the center at $\{0\}$ such that $a(E;\AA_k^N,\a^e)<0$.
    
     Then, by Lemma \ref{keylemma}, there exist a multiideal  
    $\ta$ of $ \CC[X_1,\ldots,X_N]$ and  a prime divisor $\we$ over 
    $\AA_\CC^N$  such that
    $$a(E; \AA_k^N,\a^e)=a(\we; \AA_\CC^N, \ta^e)<0,$$
    which implies $\mld(0;\AA_\CC^N)=-\infty$.
    Since we assume that $(M_{N,e})$ holds over the base field $\CC$,
     $(D_{N,e})$ also holds by Proposition \ref{equiv}
     and therefore 
    there exists $m$ such that $|m|< \ell'_{N,e}$ and $s_m(0; \AA_\CC^N, \ta^e)<0$.
    Here, as is seen in Lemma \ref{localjet}, we obtain
      $$s_m(0; \AA_k^N, \a^e)\leq s_m(0; \AA_\CC^N, \ta^e)<0.$$
    Thus we obtain $m$ such that   $|m|< \ell'_{N,e}$ and computes $\mld(0; \AA_k^N, \a^e)=-\infty$.
    So, in this case $(M_{N,e})$ holds for $k$. $\Box$
 
 \vskip.5truecm
 \noindent
{\bf [Proof of Corollary \ref{monomial}]}
  Let $\a$ be a set of monomial ideals $\{\a_1,\ldots, \a_s\}$ of $k[X_1,\ldots,X_N]$.
  Define a set of monomial ideals $\ta=\{\ta_1,\ldots, \ta_s\}$ of $\CC[X_1,\ldots,X_N]$ 
  such that $\ta_i$ is generated by the same monomials as $\a_i$ for every $i$.
  Then, $$\ta_i (\mod p)=\a_i.$$
  It is known that both $(\AA_k^N, \a_1\cdot\a_2\cdots\a_s)$ and 
  $(\AA_\CC^N, \ta_1\cdot\ta_2\cdots\ta_s)$ have toric log resolutions (\cite{H}). 
  As the Newton polygon of the ideal $\a_1\cdot\a_2\cdots\a_s$ is the same as that of 
  $\ta_1\cdot\ta_2\cdots\ta_s$, we can take a common fan corresponding to
  log resolutions.
  In general, on a log resolution there is a prime divisor computing the minimal log discrepancy.
  In our case all divisors on the log resolutions are toric divisors and they satisfy
  $$\val_E(\a_i)=\val_\we(\ta_i) \ \ \mbox{and}\ \ \ k_E=k_\we,$$
  where $E$ and $\we$ the toric divisors over $\AA_k^N$ and $\AA_\CC^N$, 
  respectively,  corresponding to a common vector $w\in \ZZ_{\geq0}^N$.
  Then, we can see that there is  a toric prime divisor computing $\mld(0; \AA_k^N,\a^e)$ and
  \begin{equation}\label{equal}
  \mld(0; \AA_k^N,\a^e)=\mld(0;\AA_\CC^N,\ta^e).
  \end{equation}
Hence, by Theorem \ref{mainthm}, Musta\cedilla{t}\v{a}-Nakamura's conjecture descends
to the set of the pairs $(\AA_k^N,\a^e)$ with monomial ideals $\a_i$.
On the other hand, the conjecture is known to hold for the pairs $(\AA_\CC^N,\ta^e)$
with monomial ideals $\ta_i$ in characteristic $0$.
Therefore, for the whole set of pairs $(\AA_k^N,\a^e)$ with monomial ideals $\a_i$
Musta\cedilla{t}\v{a}-Nakamura's conjecture holds.
As the conjecture is equivalent to ACC conjecture in characteristic $0$,
the equality (\ref{equal}) gives ACC in positive characteristic. 
   
    \vskip.5truecm

\noindent
{\bf [Proof of Theorem \ref{lct}]} The statement is proved over the base field of characteristic $0$ 
       by Shibata 
       (see the proof of Theorem \ref{shibata} in the appendix).
       Let $k$ be an algebraically closed field of characteristic $p>0$ and 
       $\a\subset k[X_0,\ldots,X_N]$
       a nonzero proper ideal and assume $\frak m^\mu\subset \a\subset \frak m$,
       where $\frak m=(X_0,\ldots,X_N)$.
       Take a system of generators  ${\bf f}=\{f_1,\ldots, f_u\}$ of $\a$.
       We may assume that $\bf f$ contains  
       all monomials of degree $\mu$ in variables $X_1,\ldots, X_N$.
       Let $c:=\lct(0;\AA_k^N, \a)$.
       
       Now by the assumption that $(\AA_k^N,\a)\in \mathcal L_k$, 
       for every $\epsilon>0$ there exists a toric prime divisor $E_\epsilon$ over $\AA_k^N$
       with the center $0$ such that 
  \begin{equation}\label{lctepsilon1}
    \left|\frac{k_{E_\epsilon} +1}{\val_{E_\epsilon}\a}-c\right| <\epsilon.
\end{equation}
    Then, by Lemma \ref{keylemma}, we obtain a prime divisor $\we_{\epsilon}$ over $\AA_\CC^N$
    and a lifting  $\ta_\epsilon$ of $\a$
    such that 
    $$k_{E_\epsilon}=k_{\we_\epsilon}\ \ \mbox{and}\ \ 
    \val_{E_\epsilon}(\a)=\val_{\we_\epsilon}(\ta_\epsilon).$$
    On the other hand, 
    by the definition of $\ta_\epsilon$ in Lemma \ref{keylemma}
    we obtain 
\begin{equation}\label{contains}    
    \tm^\mu\subset \ta_\epsilon.
\end{equation}    

      Now by (\ref{lctepsilon1}), we obtain
 \begin{equation}\label{lctepsilon2}
    c+\epsilon>\frac{k_{E_\epsilon}+1}{\val_{E_\epsilon}\a}
    =\frac{k_{\we_\epsilon}+1}{\val_{\we_\epsilon}\ta_\epsilon }
    \geq \lct(0; \AA_\CC^N,\ta_\epsilon).
\end{equation}
     Here,  because of (\ref{contains}), we can apply Proposition \ref{shibata2} 
     to $(\AA_\CC^N, \ta_\epsilon)$ and by Lemma \ref{zm-ke} we obtain that the last term of the above inequalities is
     expressed as follows: 
     
\begin{equation}\label{lctepsilon3}
   \lct(0;\AA_\CC^N,\ta_\epsilon)=z_{m_\epsilon}(0;\AA_\CC^N,\ta_\epsilon)
\end{equation}
    for some $m_\epsilon\leq L'_{N,\mu}$ where $L'_{N,\mu}$ depends only on $N$ and $\mu$.
    By the same discussion as  about $s_m$  in Lemma \ref{localjet},
    we obtain
 \begin{equation}\label{lctepsilon4}
     z_{m_\epsilon}(0;\AA_\CC^N,\ta_\epsilon)\geq z_{m_\epsilon}(0;\AA_k^N,\a)\geq c.
 \end{equation}       
      By composing  (\ref{lctepsilon1}), (\ref{lctepsilon2}), (\ref{lctepsilon3}) and (\ref{lctepsilon4}) 
      we obtain 
\begin{equation}\label{lctepsilon5}
    c+\epsilon\geq z_{m_\epsilon}(0;\AA_k^N,\a)\geq c.
\end{equation}
     As $m_\epsilon$ is a positive integer bounded by $L'_{N,\mu}$,
     we have $m_\epsilon=m$ (constant) for sufficiently small $\epsilon$ and the equality
     $$z_m(0;\AA_k^N,\a)=c.$$
    This shows the existence of $m$ computing 
     $\lct(0;\AA_k^N,\a)$ 
    and also the uniform bound $L'_{N,\mu}$ of $m$.
    By Lemma \ref{zm-ke} this yields the bound $L_{N, \mu}$ of $k_E$ for some prime divisor $E$
    computing the log canonical threshold.
          $\Box$

 \vskip.5truecm

\noindent
{\bf [Proof of Corollary \ref{zhu}]} 
     Take a pair $(\AA_k^N, \a)\in \mathcal L_k$
     and assume that $\a $ is $\m$-primary.
     Then,  there exists $\mu\in \NN$ such that 
     $$\m^\mu\subset \a.$$
     Hence, by Theorem \ref{lct},
     we obtain a prime divisor over $\AA_k^N$ computing $\lct(0; \AA_k^N,\a)$.  
     $\Box$   

\vskip.5truecm
   Now we consider a generalization of the main theorems
   to the descent to the whole set of pairs in positive characteristic.
   First we pose a problem which asks if Lemma \ref{keylemma} holds for every prime divisor
   $E$ over $\AA_k^N$ with the center $0$.
\begin{prob}\label{problem}
   Let $k$ be an algebraically closed field of characteristic $p>0$,
   $(\AA_k^N, \a^e)$ a pair and $E$ a  prime divisor over $\AA_k^N$ with the center  $0$.
   Then, are there a prime divisor $\we$ over $\AA_\CC^N$ with the center 
    $0$ and multiideal $\ta^e$
    such that 
   $$k_E=k_\we, \ \ \val_E\a_i=\val_\we\ta_i,\ \ \  \mbox{and} \ \ 
   \ta_i (\mod p)=\a_i\  \mbox{ for\ all\ } i\ ?$$
\end{prob}   
Concerning with this problem, we obtain the following:
\begin{prop}
    If Problem \ref{problem} is affirmatively solved, 
    then Musta\cedilla{t}\v{a}-Nakamura's conjecture and ACC conjecture over $\CC$
     descend to
    the whole set of pairs $(\AA_k^N,\a^e)$ defined over an algebraically closed field
    $k$ of positive characteristic.
\end{prop}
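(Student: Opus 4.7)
The plan is to re-run the argument of Theorem \ref{mainthm} after stripping out the toric-approximation hypothesis. The only place where membership in $\mathcal M_k$ entered that proof was in applying Lemma \ref{keylemma} to lift an approximating toric prime divisor to characteristic $0$; an affirmative answer to Problem \ref{problem} removes this restriction by providing the same lifting data for \emph{every} prime divisor over $\AA_k^N$ centered at $0$. So for an arbitrary pair $(\AA_k^N,\a^e)$ over $k$ of characteristic $p>0$, I would proceed by cases, exactly as in the proof of Theorem \ref{mainthm}, but choose the approximating divisors directly from the infimum defining $\mld$ rather than from the toric subfamily.

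For the descent of $(M_{N,e})$, first assume $\delta:=\mld(0;\AA_k^N,\a^e)\ge 0$. By the definition of $\mld$, for each $\epsilon>0$ there exists a prime divisor $E_\epsilon$ over $\AA_k^N$ with center $0$ satisfying $a(E_\epsilon;\AA_k^N,\a^e)<\delta+\epsilon$. Apply the affirmative answer to Problem \ref{problem} to obtain a prime divisor $\we_\epsilon$ over $\AA_\CC^N$ and a multiideal $\ta_\epsilon^e$ with $k_{E_\epsilon}=k_{\we_\epsilon}$, $\val_{E_\epsilon}\a_i=\val_{\we_\epsilon}\ta_{\epsilon,i}$ and $\ta_{\epsilon,i}\,(\mod p)=\a_i$. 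The chain of inequalities from the proof of Theorem \ref{mainthm} then works verbatim: apply $(D_{N,e})$ over $\CC$ (equivalent to $(M_{N,e})$ by Proposition \ref{equiv}) to obtain $m_\epsilon\in\ZZ_{\ge0}^s$ with $|m_\epsilon|\le \ell'_{N,e}$ computing $\mld(0;\AA_\CC^N,\ta_\epsilon^e)$, and then use Lemma \ref{localjet} to pass back to characteristic $p$. Finiteness of $\{m\mid |m|\le \ell'_{N,e}\}$ forces $m_\epsilon$ to stabilize to some $m$ as $\epsilon\to 0$, giving $s_m(0;\AA_k^N,\a^e)=\delta$; Proposition \ref{equiv} then yields a prime divisor computing $\mld$ with $k_E\le N-1+\ell'_{N,e}\cdot\max\{e_i\}$. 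The case $\mld=-\infty$ is handled by the same modification of the $(b)$ part of the proof of Theorem \ref{mainthm}.

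For the descent of the ACC conjecture, I would leverage MN already proved in characteristic $p$. In characteristic $0$, $(M_{N,e})$ for every finite exponent $e$ is equivalent to $(A_N)$ by \cite{mn} and \cite{kawk2}. Given an ascending sequence $\delta_n=\mld(0;\AA_k^N,\a_n^{e_n})$ with $e_n\subset J$ a DCC set, the descent of MN provides, for each $n$, a vector $m_n$ with $|m_n|\le \ell'_{N,e_n}$ such that $\delta_n=s_{m_n}(0;\AA_k^N,\a_n^{e_n})$. Lifting each $\a_n^{e_n}$ to $\ta_n^{e_n}$ via Problem \ref{problem} applied to a computing divisor, one obtains $s_{m_n}(0;\AA_\CC^N,\ta_n^{e_n})\ge \delta_n$ with the reverse inequality $\mld(0;\AA_\CC^N,\ta_n^{e_n})\ge \delta_n$ only up to an error that can be driven to $0$ by refining the choice of lift. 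Applying ACC in characteristic $0$ then forces the characteristic-$p$ sequence $\delta_n$ to stabilize.

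The main obstacle will be the ACC step: the bound $\ell'_{N,e}$ from $(D_{N,e})$ a priori depends on $e$, and $e_n$ varies in the DCC set $J$. I expect one must check that $\ell'_{N,e}$ can be made uniform on subsets of $J$ of bounded cardinality (using $\min\{e_i\}\ge \min J>0$ when $J$ is DCC) so that $|m_n|$ stays in a finite set as $n$ varies, which is what is needed to transfer an ascending chain to characteristic $0$ faithfully. Once this uniformity is secured, ACC in characteristic $0$ closes the argument, while the MN half of the proposition follows immediately from the first step without such subtleties.
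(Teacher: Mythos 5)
Your treatment of the Musta\cedilla{t}\v{a}-Nakamura descent is exactly the paper's argument: rerun the proof of Theorem \ref{mainthm} with Problem \ref{problem} substituted for Lemma \ref{keylemma}, so that every (not just toric) prime divisor can be lifted. That half is fine.

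The ACC half has a genuine gap, and the ``main obstacle'' you identify at the end is actually a red herring. Once the MN descent is in hand, you obtain for each $i$ a prime divisor $E_i$ over $\AA_k^N$ with center $0$ that \emph{exactly} computes $\delta_i:=\mld(0;\AA_k^N,\a_{(i)}^{e_{(i)}})$ (and $\delta_i\geq 0$, since ACC only concerns log canonical pairs). Applying Problem \ref{problem} to $E_i$ produces a divisor $\we_i$ over $\AA_\CC^N$ with center $0$ and a lift $\ta_{(i)}$ satisfying $k_{\we_i}=k_{E_i}$ and $\val_{\we_i}\ta_{(i),j}=\val_{E_i}\a_{(i),j}$, hence
\[
\mld(0;\AA_\CC^N,\ta_{(i)}^{e_{(i)}})\;\leq\; a(\we_i;\AA_\CC^N,\ta_{(i)}^{e_{(i)}})\;=\;a(E_i;\AA_k^N,\a_{(i)}^{e_{(i)}})\;=\;\delta_i.
\]
Lemma \ref{localjet} gives $s_m(0;\AA_k^N,\a_{(i)}^{e_{(i)}})\leq s_m(0;\AA_\CC^N,\ta_{(i)}^{e_{(i)}})$ for all $m$, so $\delta_i\leq\mld(0;\AA_\CC^N,\ta_{(i)}^{e_{(i)}})$. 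Therefore you get an \emph{exact} equality $\mld(0;\AA_k^N,\a_{(i)}^{e_{(i)}})=\mld(0;\AA_\CC^N,\ta_{(i)}^{e_{(i)}})$; there is no ``error to drive to $0$ by refining the lift.'' The ascending chain of characteristic-$p$ minimal log discrepancies is then literally an ascending chain of characteristic-$0$ ones with exponents still drawn from $J$, and $(A_N)$ over $\CC$ forces it to stabilize. In particular you never need to control $\ell'_{N,e}$ uniformly over $e\subset J$, so the uniformity issue you raise does not arise. The one point you do need and should state is that assuming $(A_N)$ over $\CC$ yields $(M_{N,e})$ over $\CC$ for all $e$ (via \cite{kawk2}), which is what makes the first step (existence of the computing divisor $E_i$ in characteristic $p$) available.
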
   

\begin{proof} About Musta\cedilla{t}\v{a}-Nakamura's conjecture,
   the proof is the same as the proof of Theorem \ref{mainthm} and
   we have only to replace the word ``toric prime divisor" by ``prime divisor".
   To show the descent of ACC conjecture, we assume that ACC conjecture holds
   over $\CC$.
     Let 
 \begin{equation}\label{ascending}     
      \mld(0;\AA_k^N, \a_{(1)}^{e_{(1)}})<\mld(0;\AA_k^N, \a_{(2)}^{e_{(2)}})<\cdots <
      \mld(0;\AA_k^N, \a_{(i)}^{e_{(i)}})< \cdots 
\end{equation}      
      be an ascending chain with $e_{(i)}\subset J$ and  multiideals  $\a_{(i)}$  of
      $k[X_1,\ldots,X_N]$
            for every $i\in \NN$,
      where $J$ is a DCC set.
     
      As we assume that  the conjecture ($A_N$) holds over the base field $\CC$, 
       by \cite[Theorem 4.6]{kawk2}, 
      the conjecture  $(M_{N,e})$ 
     holds over the base field $\CC$ for every finite set $e\subset \RR_{>0}$.
     Then, by the above discussion, for each $i\in \NN$ there exists a prime divisor  $E_i$ computing 
     $\mld(0;\AA_k^N, \a_{(i)}^{e_{(i)}})$.
     Therefore, if Problem \ref{problem} is affirmatively solved,
      there exist $\ta_{(i)}$ and $\we_i$ over
     the base field $\CC$  such that $\ta_{(i)} (\mod p)=\a_{(i)}$ and
     $$\mld(0;\AA_k^N, \a_{(i)}^{e_{(i)}})=a(E_i;\AA_k^N, \a_{(i)}^{e_{(i)}})
     =a(\we_i;\AA_{\CC}^N,\ta_{(i)}^{e_{(i)}})\geq \mld(0;\AA_{\CC}^N,\ta_{(i)}^{e_{(i)}}).$$
     By Lemma \ref{localjet}, we obtain  
     $$\mld(0;\AA_k^N, \a_{(i)}^{e_{(i)}})= \mld(0; \AA_{\CC}^N,\ta_{(i)}^{e_{(i)}}).$$
     Now,  the ascending chain (\ref{ascending}) coincides with the
     ascending chain on $\AA_\CC$:
     $$\mld(0;\AA_\CC^N, \ta_1^{e_{(1)}})<\mld(0;\AA_\CC^N, \ta_2^{e_{(2)}})<\cdots <
      \mld(0;\AA_\CC^N, \ta_i^{e_{(i)}})< \cdots .$$
      By the assumption $(A_{N})$ over $\CC$, this sequence stops at a finite stage,
      which yields $(A_N)$ over $k$. 
\end{proof}

\begin{prop} Assume that  Problem \ref{problem} is affirmatively solved and 
  moreover under the notation in Problem \ref{problem}
  assume that $\tilde\m^\mu\subset \ta$ when a pair $(\AA_k^N, \a)$
  satisfies $\m^\mu\subset \a$.
  Then,   for every pair $(\AA_k^N, \a)$ with an $\m$-primary ideal $\a$,
   there exists a prime divisor over $\AA_k^N$ computing  $\lct(0;\AA_k^N, \a)$.
\end{prop}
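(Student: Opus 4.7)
The plan is to mimic the proof of Theorem \ref{lct}, but replacing the toric approximation hypothesis with the affirmative answer to Problem \ref{problem} (together with the extra assumption on $\tilde\m^\mu\subset\ta$). Concretely, let $(\AA_k^N,\a)$ be a pair with $\a$ an $\m$-primary ideal, fix $\mu$ so that $\m^\mu\subset\a$, and set $c:=\lct(0;\AA_k^N,\a)$. Since $c$ is defined as an infimum over prime divisors over $\AA_k^N$ with center at $0$, one can choose a minimizing sequence $\{E_i\}_{i\in\NN}$ with
\[
\frac{k_{E_i}+1}{\val_{E_i}(\a)}\longrightarrow c.
\]

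Next, apply the assumed affirmative answer to Problem \ref{problem} to each $E_i$: this yields a prime divisor $\we_i$ over $\AA_\CC^N$ with center $0$ and a lifting $\ta_i$ of $\a$ to characteristic $0$ such that $k_{E_i}=k_{\we_i}$ and $\val_{E_i}(\a)=\val_{\we_i}(\ta_i)$. By the additional hypothesis, $\tilde\m^\mu\subset\ta_i$, so Proposition \ref{shibata2} applies to $(\AA_\CC^N,\ta_i)$: combined with Lemma \ref{zm-ke}, this produces an integer $m_i\leq L'_{N,\mu}$ (with $L'_{N,\mu}$ depending only on $N$ and $\mu$, \emph{independent} of $i$) such that
\[
z_{m_i}(0;\AA_\CC^N,\ta_i)=\lct(0;\AA_\CC^N,\ta_i).
\]

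Now invoke the analog of Lemma \ref{localjet} for the function $z_m$, as used in the proof of Theorem \ref{lct}: since $\ta_i\ (\mod p)=\a$, we have $z_{m_i}(0;\AA_\CC^N,\ta_i)\geq z_{m_i}(0;\AA_k^N,\a)\geq c$. Chaining the inequalities,
\[
c\;\leq\; z_{m_i}(0;\AA_k^N,\a)\;\leq\;\lct(0;\AA_\CC^N,\ta_i)\;\leq\;\frac{k_{\we_i}+1}{\val_{\we_i}(\ta_i)}=\frac{k_{E_i}+1}{\val_{E_i}(\a)}\;\longrightarrow\; c.
\]
Hence $z_{m_i}(0;\AA_k^N,\a)\to c$. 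Since $\{m_i\}$ takes values in the finite set $\{0,1,\ldots,L'_{N,\mu}\}$, some integer $m$ is attained infinitely often, and for this $m$ we get $z_m(0;\AA_k^N,\a)=c$. Finally, apply the (ii)$\Rightarrow$(i) direction in the proof of Lemma \ref{zm-ke}: take a maximal divisorial set $C_{\AA_k^N}(q\cdot\val_E)$ realizing the codimension of $\cont^{\geq m+1}(\a)$ and verify that $E$ computes $\lct(0;\AA_k^N,\a)$.

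The potential obstacle is not really conceptual in this setting—once Problem \ref{problem} is granted together with the $\tilde\m^\mu\subset\ta$ condition, every piece is already in place. The only subtle point is confirming that the argument of Lemma \ref{localjet} genuinely carries over to $z_m$ (as was implicitly used for Theorem \ref{lct}), i.e., that the defining ideal of the relevant jet-scheme truncation in characteristic $p$ lifts to a defining ideal in characteristic $0$ whose height is at most that of the lifted ideal. This reduces directly to Proposition \ref{polynomial1}(ii) applied to the ideal generated by the $F_i^{(j)}$ and their liftings $G_i^{(j)}$, so no new input is required.
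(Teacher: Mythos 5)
Your proposal is correct and follows essentially the same route as the paper, whose own proof is the one-line instruction to replace ``toric prime divisor'' by ``prime divisor'' in the proof of Theorem~\ref{lct} (using Problem~\ref{problem} in place of Lemma~\ref{keylemma}) and then invoke the argument of Corollary~\ref{zhu}. You have simply written out in full the steps that this replacement entails, correctly identifying that the extra hypothesis $\tilde\m^\mu\subset\ta$ is what replaces the automatic inclusion (\ref{contains}) obtained from the toric lifting in Lemma~\ref{keylemma}, so that Proposition~\ref{shibata2} and Lemma~\ref{zm-ke} still apply.
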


\begin{proof}
    We have only to replace the word ``toric prime divisor" in the proof of Theorem \ref{lct}
    by ``prime divisor"  and the observe that the proof of  Corollary \ref{zhu} works.
\end{proof}

The following is provided by Mircea Musta\cedilla{t}\v{a}.

\begin{prop}[M. Musta\cedilla{t}\v{a}]\label{mustata}
  Assume Problem \ref{problem} is affirmatively solved.
  Let $ \a$ be an ideal in $k[X_1,\ldots,X_N]$, where $k$ is an algebraically closed field of 
  characteristic $p>0$. Then either there is a divisor $E$ with center $0$ that computes 
  $c=\lct(0;\AA_k^N)$ or $c$ lies in the set $T_{N-1}$ of log canonical thresholds in characteristic 0, for ideals in $k[X_1,\ldots,X_{N-1}]$. 
  In any case, we get that the log canonical threshold of every ideal in positive characteristic is a rational number.
\end{prop}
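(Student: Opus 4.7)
The plan is to prove the contrapositive of the nontrivial half of the dichotomy: assuming no prime divisor over $\AA_k^N$ with center at $0$ computes $c := \lct(0;\AA_k^N,\a)$, I will conclude $c \in T_{N-1}$. Since the infimum defining $c$ is then not attained, I extract a sequence of prime divisors $E_n$ over $\AA_k^N$, each with center at $0$, such that the rationals $c_n := (k_{E_n}+1)/\val_{E_n}(\a)$ form a strictly decreasing sequence with limit $c$. Invoking the hypothesized affirmative answer to Problem \ref{problem}, each $E_n$ lifts to a pair $(\we_n, \ta_n)$ over $\AA_\CC^N$ with $\ta_n\,(\mod p)=\a$, $k_{\we_n}=k_{E_n}$, and $\val_{\we_n}(\ta_n)=\val_{E_n}(\a)$, so $(k_{\we_n}+1)/\val_{\we_n}(\ta_n)=c_n$.

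Next I compare characteristic $0$ and characteristic $p$ through the jet-scheme invariants $z_m$. Setting $\tilde c_n := \lct(0;\AA_\CC^N,\ta_n)$, we have $\tilde c_n \le c_n$ by definition of $\lct$, while the $z_m$-analogue of Lemma \ref{localjet}---the inequality already used as (\ref{lctepsilon4}) in the proof of Theorem \ref{lct}---gives $z_m(0;\AA_k^N,\a) \le z_m(0;\AA_\CC^N,\ta_n)$ for every $m$. Combined with Proposition \ref{lct-by-jet} this forces $c \le \tilde c_n$. Hence $c \le \tilde c_n \le c_n$ and $\tilde c_n \to c$.

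The crux is to rule out the equality $\tilde c_n = c$ for any single $n$. Suppose it held. In characteristic $0$, a log resolution of $(\AA_\CC^N, \ta_n)$ supplies a prime divisor $\widetilde F$ computing $\tilde c_n = c$, and running the (i)$\Rightarrow$(ii) step of the proof of Lemma \ref{zm-ke} with $m_0 := \val_{\widetilde F}(\ta_n)-1$ yields $z_{m_0}(0;\AA_\CC^N,\ta_n)=c$. The $z_m$-comparison then gives $z_{m_0}(0;\AA_k^N,\a)\le c$, while Proposition \ref{lct-by-jet} gives $z_{m_0}(0;\AA_k^N,\a)\ge c$, so equality holds in characteristic $p$. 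Now the (ii)$\Rightarrow$(i) step of Lemma \ref{zm-ke} applies over any field: extract a maximal divisorial set $C_A(q\cdot\val_E)$ as an irreducible component of $\cont^{\ge m_0+1}(\a)$ of the correct codimension, using only the codimension formula $\codim(C_A(q\cdot\val_E), A_\infty)=q(k_E+1)$ and the characterization of finite-codimensional irreducible components of contact loci, both valid in positive characteristic by \cite{ir2}. This produces a prime divisor $E$ over $\AA_k^N$ computing $c$, contradicting the assumption. Hence $\tilde c_n > c$ strictly for every $n$.

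Consequently $(\tilde c_n)$ is a strictly decreasing sequence in $T_N$ converging to $c$, so $c$ is a strict accumulation point from above of $T_N$. By the characteristic $0$ accumulation theorem for log canonical thresholds---accumulation points of $T_N$ from above are contained in $T_{N-1}$, a result of Koll\'ar and of de Fernex-Ein-Musta\cedilla{t}\v{a}---we conclude $c \in T_{N-1}$. The rationality claim is then immediate: in the first case $c=(k_E+1)/\val_E(\a)\in\QQ$, and in the second case $c\in T_{N-1}\subset \QQ$ because every characteristic $0$ log canonical threshold is rational via log resolution. The main obstacle is this last external input, the characteristic $0$ accumulation theorem; the remainder of the argument is careful bookkeeping with Problem \ref{problem}, Proposition \ref{lct-by-jet}, the $z_m$-comparison, and both directions of Lemma \ref{zm-ke}.
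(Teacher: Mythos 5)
Your proof is correct and rests on the same three pillars as the paper's own argument: the lifting supplied by Problem \ref{problem}, the comparison $z_m(0;\AA_k^N,\a)\le z_m(0;\AA_\CC^N,\ta)$, and the characteristic-zero fact that $c\notin T_{N-1}$ forbids $T_N$ from accumulating at $c$. You merely run the argument as a contrapositive with a sequence of divisors, whereas the paper works directly with a single divisor whose ratio falls in an interval $(c,c+\epsilon)$ disjoint from $T_N$; in particular your step ruling out $\tilde c_n=c$ is precisely the paper's mechanism for transferring a characteristic-zero computing divisor back to characteristic $p$ via $z_m$.
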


\begin{proof}
In order to prove the  assertion, we use the fact that if $c$ does not lie in
$T_{N-1}$, then $c$ is not an accumulation point of log canonical thresholds in $T_N$.
Choose $\epsilon>0$ such that there is no element of $ T_N$ in $(c,c+\epsilon)$.
    Take a prime divisor $E$ over $\AA_k^N$ with the center  $0$ such that
$$\frac{k_E+1}{\val_E(\a)}<c+\epsilon,$$ 
 and then by choosing a prime divisor $\we$ over an ideal
$\ta$ in $\CC[X_1,\ldots,X_N]$ satisfying the conditions in the Problem \ref{problem}.
   Then 
   $$c+\epsilon>\frac{k_E+1}{\val_E(\a)}=\frac{k_\we+1}{\val_\we(\ta)}\geq \lct(0; \AA_\CC^N,\ta)\geq c.$$
   Here, the last inequality follows from 
   $$z_m(0,\AA_\CC^N,\ta)\geq z_m(0,\AA_k^N,\a)\ \ \mbox{for\ every}\ \ m\in \NN,$$
   which is proved in the same way as the inequality of $s_m$'s in Lemma \ref{localjet}.
  By the choice of $\epsilon$ we obtain
$$\lct(0;\AA_\CC,\a)=c. $$
This already shows that $c$ is a rational number.
Using again the fact that $c$ is not in $T_{N-1}$,
it follows that we have a prime divisor $F$ over $\AA_\CC^N$ 
with center $0$ that computes 
$\lct(0;\AA_\CC^N, \ta )$. 
In particular, there is $m\in \ZZ_{>0}$ such that $\lct(0;\AA_\CC^N,\ta)$ is computed by $z_m(0,\AA_\CC^N,\ta)$. Using the inequality
$z_m(0,\AA_\CC^N,\ta)\geq z_m(0,\AA_k^N,\a)$ 
and the fact that the two log canonical thresholds are equal, we see that in fact $\lct(0,\AA_k^N,\a)$ is computed by
$z_m(0,\AA_k^N,\a)$, and we obtain our assertion.  
\end{proof}

\begin{rem} Problem \ref{problem} may be solved by reducing the problem into the
   case of toric divisor by using the result that every discrete valuation can be lifted
   to a monomial valuation on an extended field (\cite{mono}).

\end{rem}

\newpage
\begin{center}
{\bf Appendix}
\vskip.5truecm
by Kohsuke Shibata, {\em University of Tokyo, Meguro, Tokyo, Japan}
\end{center}

\def\thesection{\Alph{section}}
\setcounter{section}{0}
\section{Proof of Proposition \ref{shibata2} in characteristic $0$}
\vskip.5truecm
\noindent
In this section, we prove that there is a bound of  discrepancies of prime divisors computing log canonical thresholds of ideals which contains a fixed power of the maximal ideal. 
We use the theory of generic limit of ideals to show this statement.
Throughout this section, we always assume that
the base field $k$ is an algebraically closed field of characteristic $0$.

We recall the definition of the generic limit of ideals  and some of its properties. 
The reader is referred to \cite{dFEM} for details.

Let $R=k[[X_1,\dots,X_N]]$ with the maximal ideal $\mathfrak m$ and for every field extension $L/k$ let
$R_L=L[[X_1,\dots,X_N]]$ and $\mathfrak m_L=\mathfrak mR_L$. 
For every $d$, let $\mathcal{H}_d$ be the Hilbert scheme parametrizing the ideals in $R$ containing $\mathfrak m^d$.
We have a natural surjective map $\tau_d: \mathcal{H}_d\to\mathcal{H}_{d-1}$
and
by generic flatness, 
we can
cover $\mathcal{H}_d$ by disjoint locally closed subsets such that the restriction of $\tau_d$ to each of these
subsets is a morphism.

We now fix  a positive integer $s$.
Consider the product $(\mathcal{H}_d)^s$ and the map $t_d: (\mathcal{H}_d)^s\to(\mathcal{H}_{d-1})^s$
that is given by $\tau_d$ on each component.

We define a generic limit of the collection  of $s$-tuples $\{({\mathfrak a_{1}^{(i)}},\dots,{\mathfrak a_{s}^{(i)}})\}_{i\in I}$ of  ideals in $R$ indexed by an infinite set $I$. we can construct irreducible closed subsets $Z_d\subset (\mathcal H_d)^s$
such that
\begin{itemize}%
\item[(1)]%
$t_d$ induces a dominant rational map $Z_d\dashrightarrow Z_{d-1}$,
\item[(2)]%
$I_d:=\{i\in I| (\mathfrak a_1^{(i)}+\mathfrak m^d,\dots,\mathfrak a_s^{(i)}+\mathfrak m^d)\in Z_d \}$ is infinite,
\item[(3)]%
the set of points in $(\mathcal H_d)^s$ indexed by $I_d$ is dense in $Z_d$.
\end{itemize}
Let $K:=\cup_{d\ge 1}k(Z_d)$. 
For each $d$, the morphism $\mathrm{Spec}K\to Z_d$ corresponds to an $s$-tuple $(\widetilde{\mathfrak a}_1^{(d)},\dots,\widetilde{\mathfrak a}_s^{(d)})$.
Furthermore there exist  ideals  ${\mathfrak a_j}$ in $R_K$ for $1\le j\le s$ such that $\widetilde{\mathfrak a}_j^{(d)}={\mathfrak a_j}+\mathfrak m_K^d$.
We call the $s$-tuple  $({\mathfrak a_1},\dots,{\mathfrak a_s})$ the generic limit of  the collection  of $s$-tuples $\{({\mathfrak a_{1}^{(i)}},\dots,{\mathfrak a_{s}^{(i)}})\}_{i\in I}$.

\begin{lem}\label{inclusion of generic limit}
{\rm[Lemma 3.1 and Lemma 3.2  in \cite{dFEM}]}
      With the above notation, suppose that
      $\mathfrak b\subset\mathfrak a_j^{(i)}\subset \mathfrak m$ for some ideal $\mathfrak b\subset R$
      and every $i,j$.
     Then $\mathfrak bR_K\subset\mathfrak a_j\subset \mathfrak m_K$ for every $j$.
\end{lem}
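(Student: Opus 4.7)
The plan is to work on each finite-dimensional approximation $Z_d$ and exploit the irreducibility of $Z_d$ together with the density in it of the points indexed by $I_d$. Once the desired inclusions are established for the generic point of each $Z_d$ (equivalently, for the $s$-tuple $(\widetilde{\mathfrak a}_1^{(d)},\dots,\widetilde{\mathfrak a}_s^{(d)})$), passage to $K$ together with Krull's intersection theorem will give the inclusions for the generic limit.

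For the lower inclusion $\mathfrak b R_K\subset \mathfrak a_j$, I first observe that the subset
\[
W_d := \{(\mathfrak c_1,\dots,\mathfrak c_s)\in(\mathcal H_d)^s \mid \mathfrak b+\mathfrak m^d\subset \mathfrak c_\ell\text{ for all }\ell\}
\]
is closed in $(\mathcal H_d)^s$: thinking of $\mathcal H_d$ as parametrizing quotients of the finite-dimensional $k$-vector space $R/\mathfrak m^d$, the membership of a fixed element of $R/\mathfrak m^d$ in the universal ideal is a closed condition, and $\mathfrak b+\mathfrak m^d$ is finitely generated. By hypothesis every point of $Z_d$ indexed by $I_d$ lies in $W_d$, and since this set is dense in the irreducible $Z_d$, the whole of $Z_d$ lies in $W_d$. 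Applied to the generic point of $Z_d$ this gives
\[
\mathfrak b R_{k(Z_d)}+\mathfrak m_{k(Z_d)}^d \subset \widetilde{\mathfrak a}_j^{(d)} \quad\text{for every }j,d.
\]
Base-changing to $K$ and using $\widetilde{\mathfrak a}_j^{(d)}R_K=\mathfrak a_j+\mathfrak m_K^d$, one obtains $\mathfrak b R_K\subset \mathfrak a_j+\mathfrak m_K^d$ for every $d$. Since $R_K$ is Noetherian local, Krull's intersection theorem applied to $R_K/\mathfrak a_j$ yields $\bigcap_d(\mathfrak a_j+\mathfrak m_K^d)=\mathfrak a_j$, and hence $\mathfrak b R_K\subset \mathfrak a_j$.

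For the upper inclusion $\mathfrak a_j\subset \mathfrak m_K$, I use the dual open condition: the subset $U_d\subset (\mathcal H_d)^s$ of tuples $(\mathfrak c_1,\dots,\mathfrak c_s)$ with each $\mathfrak c_\ell$ \emph{proper} (equivalently, contained in $\mathfrak m/\mathfrak m^d$) is open in $(\mathcal H_d)^s$, being the complement of the finitely many closed strata where some component equals the unit ideal. Every point of $Z_d$ coming from $I_d$ lies in $U_d$ by assumption, so $U_d\cap Z_d$ is a nonempty open subset of the irreducible $Z_d$ and therefore contains its generic point. This yields $\widetilde{\mathfrak a}_j^{(d)}\subset \mathfrak m_{k(Z_d)}$ for every $j,d$, and base-changing to $K$ gives $\mathfrak a_j+\mathfrak m_K^d\subset \mathfrak m_K$; taking $d=1$ already forces $\mathfrak a_j\subset \mathfrak m_K$.

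The only point that really requires care is the rigorous identification of $W_d$ as closed and $U_d$ as open inside $(\mathcal H_d)^s$; once this Hilbert-scheme technology is in place, the argument is essentially formal, passing from pointwise inclusions on the dense set $I_d$ to inclusions at the generic point of $Z_d$ by irreducibility, and then from the $\mathfrak m_K^d$-truncated inclusions to the limit inclusions by Krull. This is also why one cannot simply replace ``$\subset \mathfrak m$'' by a general proper containment without further hypotheses: only conditions that are closed (or open-with-Zariski-density) on the Hilbert scheme behave well under the generic-limit construction.
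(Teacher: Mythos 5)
Your proof is correct, and it is essentially the standard argument: the paper itself gives no proof of this lemma (it is quoted from Lemmas 3.1 and 3.2 of de Fernex--Ein--Musta\cedilla{t}\v{a}), and your reconstruction via closedness of the containment condition on $(\mathcal H_d)^s$, density of the points indexed by $I_d$ in the irreducible $Z_d$, and the Krull intersection theorem in $R_K$ is exactly how the cited result is proved. One small quibble with your closing remark: containment of the universal ideal in a \emph{fixed} ideal $\mathfrak c+\mathfrak m^d$ is also a closed condition on each component of $\mathcal H_d$ (a sub-Grassmannian condition, since the colength is constant there), so the upper inclusion does generalize from $\mathfrak m$ to an arbitrary $\mathfrak c$ with no further hypotheses --- this more general two-sided statement is what the cited lemmas actually assert.
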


     We put $X=\mathrm{Spec}\, R$ and 
     $\widetilde{X}= \mathrm{Spec}R_K$.
     We denote by $x$ and $\tilde{x}$ 
     the closed points of $X$ and   $\widetilde{X}$, respectively.

\begin{prop}\label{generic limit}
{\rm[Proposition 3.3  in \cite{dFEM}, Proposition 3.1  in \cite{mn}]}
With the above notation, suppose that $\mathfrak a_j^{(i)}$ are proper nonzero ideals.
We write  ${(\mathfrak a^{(i)})}^e:=\prod_{j=1}^s(\mathfrak a_j^{(i)})^{e_j}$ and ${\mathfrak a}^e:=\prod_{j=1}^s\mathfrak a_j^{e_j}$ for  some positive real numbers $e=\{e_1,\dots,e_s\}$.
Suppose that $\mathfrak a_j\neq 0$ for all $j$.
Then the following hold:
\begin{itemize}%
\item[(i)]%
For every positive real numbers $e=\{e_1,\dots,e_s\}$,
then $\mathrm{lct}({\widetilde{x}};\widetilde X, {\mathfrak a}^e)$ is a limit point of the set 
$\{\mathrm{lct}(x;X,{(\mathfrak a^{(i)})}^e)|\, i\ge 1, i\in I\}$.
 
\item[(ii)]%
      If $E$ is a prime divisor over $\widetilde{X}$  computing $\lct({\widetilde{x}};\widetilde X,    
      {\mathfrak a}^e)$ for some positive real numbers $e=\{e_1,\dots,e_s\}$ and having center at the closed    point $\widetilde{x}$, then for every 
      $d\gg 0$ there is an infinite subset $I_d'\subset I$ depending on $E$ and $e$ and 
      satisfying  the following property:
     for every $i\in I'_d$ there is a divisor $E_i$ over $X$ that computes 
     $\mathrm{lct}(x;X, \prod_{j=1}^s(\mathfrak a_j^{(i)}+\mathfrak m^d)^{e_j})$, 
     which is equal to  
     $\mathrm{lct}({\widetilde{x}};\widetilde X, \prod_{j=1}^s(\mathfrak a_j+\mathfrak m_K^d)^{e_j})$ and we have $\val_E({\mathfrak m}_K)=\val_{E_i}({\mathfrak m})$, $k_E=k_{E_i}$ and  $\val_E({\mathfrak a_j}+{\mathfrak m}_K^d)=\val_{E_i}({\mathfrak a_j^{(i)}}+{\mathfrak m}^d)$ for $1\le j\le s$.

\end{itemize}
\end{prop}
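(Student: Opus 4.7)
The plan is to deduce both parts from the constructibility of log canonical thresholds in flat families, combined with the density of the indexed points built into the construction of the generic limit.

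For part (i), I would first use the standard fact that $\mathrm{lct}(x;X,\mathfrak a^e)$ depends only on the truncations $\mathfrak a_j+\mathfrak m^d$ for $d$ sufficiently large (which is legitimate since each $\mathfrak a_j$ is proper and nonzero by Lemma \ref{inclusion of generic limit}), so that replacing the original ideals by their truncations is harmless in the limit $d\to\infty$. Over each $Z_d$ one has a tautological $s$-tuple of ideals, and the function sending a closed point $z\in Z_d$ to the lct of the corresponding tuple is constructible; after shrinking, it is constant on a dense open $U_d\subset Z_d$, and this generic value equals $\mathrm{lct}(\tilde x;\widetilde X,\mathfrak a^e)$. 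Since by construction the set $I_d$ indexes a dense subset of $Z_d$, infinitely many $i\in I_d$ land in $U_d$, and their lcts accumulate at the generic value, which gives the desired limit point.

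For part (ii), the plan is to spread out a log resolution realizing $E$ over a dense open subset of some $Z_{d_1}$, and then specialize. Start with a log resolution $\pi:Y\to \widetilde X$ of the pair on which $E$ appears and computes the lct of $\prod(\mathfrak a_j+\mathfrak m_K^d)^{e_j}$. Since $K=\bigcup_d k(Z_d)$, all of the relevant data — the birational model, the divisor $E$, and the numerical invariants $k_E$, $\val_E\mathfrak m_K$, $\val_E(\mathfrak a_j+\mathfrak m_K^d)$ — are defined over $k(Z_{d_1})$ for some $d_1\ge d$. After shrinking one obtains a family of log resolutions over a dense open $V\subset Z_{d_1}$ with a divisor $\mathcal E\subset \mathcal Y$ restricting to $E$ over the generic point. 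Specializing to closed points of $V$ in the image of $I_{d_1}$ — an infinite set $I_d'$ by density of $I_{d_1}$ — yields the desired $E_i$, and flatness preserves $k_{E_i}$, $\val_{E_i}\mathfrak m$, and $\val_{E_i}(\mathfrak a_j^{(i)}+\mathfrak m^d)$; combined with the constancy of lct on $U_d\cap V$ from part (i), this forces each $E_i$ to actually compute $\mathrm{lct}(x;X,\prod(\mathfrak a_j^{(i)}+\mathfrak m^d)^{e_j})$.

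The main obstacle will be the rigorous spreading-out of the log resolution $\pi$: the birational model $Y$ is only defined up to isomorphism over $K$, so descending it to some $k(Z_{d_1})$ and then to a flat family over a dense open $V\subset Z_{d_1}$ — while simultaneously preserving both the distinguished divisor $E$ and all of its numerical invariants — requires repeated shrinking together with an application of generic flatness to the pullbacks of the ideals $\mathfrak a_j+\mathfrak m_K^d$. The preservation of $k_{E_i}$ is automatic once the family is smooth, but the equalities on $\val_{E_i}$ demand that the exceptional components meet the strict transforms of $Z(\mathfrak a_j+\mathfrak m^d)$ in a uniform way across $V$, which one ensures by iteratively restricting $V$ to an open subset over which each numerical invariant of interest is constant.
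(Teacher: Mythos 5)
The paper does not actually prove this proposition: it is quoted verbatim from the cited sources (Proposition 3.3 of de Fernex--Ein--Musta\cedilla{t}\v{a} and Proposition 3.1 of Musta\cedilla{t}\v{a}--Nakamura), so there is no in-paper proof to compare against. Your outline is the standard argument of those references -- constructibility/constancy of $\mathrm{lct}$ on a dense open subset of $Z_d$ plus density of the points indexed by $I_d$ for (i), and spreading out a log resolution over a dense open subset of some $Z_{d_1}$ and specializing for (ii) -- and it is essentially correct. The one step you should make explicit in (i) is the \emph{uniform} truncation estimate, e.g.\ $\mathrm{lct}(x;X,\mathfrak b^e)\le \mathrm{lct}\bigl(x;X,\prod_j(\mathfrak b_j+\mathfrak m^d)^{e_j}\bigr)\le \mathrm{lct}(x;X,\mathfrak b^e)+C_{N,e}/d$ with a constant independent of the tuple $\mathfrak b$: ``$d$ sufficiently large'' must not depend on $i$, since otherwise passing from $\mathrm{lct}\bigl(x;X,\prod_j(\mathfrak a_j^{(i)}+\mathfrak m^d)^{e_j}\bigr)$ back to $\mathrm{lct}(x;X,(\mathfrak a^{(i)})^e)$ along the dense subsequence would not be ``harmless in the limit.''
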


\begin{defn}
   Let $X$ be a log terminal variety,
   $x$ a closed point of $X$  
    and $\mathfrak b$ a proper nonzero ideal on $X$.
    The minimal discrepancy of $X$ computing $\mathrm{lct}(x; X,\mathfrak b)$ is defined by
$$\mathrm{md}(\mathrm{lct}(x;X,\mathfrak b)):=\mathrm{min}\{k_E \mid E\ \mathrm{computes}\ \mathrm{lct}(x;X,\mathfrak b)\}.$$
\end{defn}

\begin{thm}\label{shibata}
 Let $R=k[[X_1,\dots,X_N]]$ with the maximal ideal $\mathfrak m$.
Let $X=\mathrm{Spec}\, R$ and $x$ the  closed point of $X$.

    Then, for  a positive integer $\mu$,
    there is a positive integer $L_{N,\mu}$ depending only on $N$ and $\mu$ such that for every 
    ${\mathfrak m}$-primary ideal $\mathfrak b$ on $X$ with 
       ${\mathfrak m}^\mu\subset \mathfrak b$, 
    there is some prime divisor $E$ over $X$ that computes $\mathrm{lct}(x;X,\mathfrak b)$  
    and satisfies  $k_E\le L_{N, \mu}$.
\end{thm}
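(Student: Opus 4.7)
\textbf{Proof plan for Theorem \ref{shibata}.}

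The plan is to argue by contradiction using the generic limit machinery recalled above. Suppose no such bound $L_{N,\mu}$ exists. Then for every positive integer $n$ there is an $\mathfrak{m}$-primary ideal $\mathfrak{b}_n \subset R$ with $\mathfrak{m}^\mu \subset \mathfrak{b}_n$ such that every prime divisor $E$ computing $\mathrm{lct}(x;X,\mathfrak{b}_n)$ satisfies $k_E > n$; equivalently, $\mathrm{md}(\mathrm{lct}(x;X,\mathfrak{b}_n)) > n$. Viewing each $\mathfrak{b}_n$ as a $1$-tuple (take $s=1$ and $e=\{1\}$ in the generic limit construction), I form the generic limit $\mathfrak{b}$ in $R_K$ of the collection $\{\mathfrak{b}_n\}_{n \ge 1}$ indexed by $I = \mathbb{N}$.

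The next step is to extract the properties of $\mathfrak{b}$ needed to apply Proposition \ref{generic limit}. Since $\mathfrak{m}^\mu \subset \mathfrak{b}_n \subset \mathfrak{m}$ for all $n$, Lemma \ref{inclusion of generic limit} gives
\[
\mathfrak{m}_K^\mu \;\subset\; \mathfrak{b} \;\subset\; \mathfrak{m}_K,
\]
so in particular $\mathfrak{b}$ is a nonzero proper ideal. Because the base field $K$ has characteristic $0$ and $\widetilde{X} = \mathrm{Spec}\, R_K$ is regular, a log resolution of the pair $(\widetilde{X}, \mathfrak{b})$ exists, and therefore some prime divisor $E$ over $\widetilde{X}$ with center $\tilde{x}$ computes $\mathrm{lct}(\tilde{x}; \widetilde{X}, \mathfrak{b})$.

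Now fix any integer $d \ge \mu$. The crucial observation is that the containment $\mathfrak{m}^\mu \subset \mathfrak{b}_n$ (and analogously $\mathfrak{m}_K^\mu \subset \mathfrak{b}$) forces
\[
\mathfrak{b}_n + \mathfrak{m}^d = \mathfrak{b}_n, \qquad \mathfrak{b} + \mathfrak{m}_K^d = \mathfrak{b}.
\]
Applying Proposition \ref{generic limit}(ii) to the divisor $E$ (with $s=1$ and $e=\{1\}$), I obtain, for $d \gg 0$ (in particular we may also take $d \ge \mu$), an infinite subset $I_d' \subset \mathbb{N}$ such that for every $i \in I_d'$ there is a prime divisor $E_i$ over $X$ computing $\mathrm{lct}(x;X,\mathfrak{b}_i + \mathfrak{m}^d) = \mathrm{lct}(x;X,\mathfrak{b}_i)$, with $k_{E_i} = k_E$. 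Choosing any $i \in I_d'$ with $i > k_E$ contradicts the assumption $\mathrm{md}(\mathrm{lct}(x;X,\mathfrak{b}_i)) > i$. Hence the uniform bound $L_{N,\mu}$ exists.

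The main obstacle is not really computational but conceptual: one must confirm that Proposition \ref{generic limit}(ii) is applicable in the situation produced by contradiction, i.e., that the generic limit $\mathfrak{b}$ is nonzero (supplied by $\mathfrak{m}_K^\mu \subset \mathfrak{b}$) and that a divisor computing $\mathrm{lct}(\tilde{x}; \widetilde{X}, \mathfrak{b})$ exists on $\widetilde{X}$ (supplied by characteristic $0$ resolution). The rest is then a matter of exploiting the truncation $d \ge \mu$ so that passing from $\mathfrak{b}_i$ to $\mathfrak{b}_i + \mathfrak{m}^d$ is a trivial operation, which is what makes the $\mathfrak{m}^\mu \subset \mathfrak{b}$ hypothesis exactly what is needed for the uniform bound.
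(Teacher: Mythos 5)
Your proposal is correct and follows essentially the same route as the paper's proof: contradiction, generic limit of the sequence of ideals, Lemma \ref{inclusion of generic limit} to get $\mathfrak m_K^\mu\subset\mathfrak b\subset\mathfrak m_K$, and Proposition \ref{generic limit}(ii) to transfer a divisor computing $\mathrm{lct}(\tilde x;\widetilde X,\mathfrak b)$ back to infinitely many members of the sequence with the same $k_E$. Your explicit remark that $d\ge\mu$ forces $\mathfrak b_n+\mathfrak m^d=\mathfrak b_n$ is a point the paper leaves implicit in its ``$d\gg\mu$,'' but it is the same argument.
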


\begin{proof}
    We argue by contradiction. 
    If the conclusion of the theorem fails, then
    we can find a sequence of ${\mathfrak m}$-primary ideals  $({\mathfrak a}_i)_{i\ge 1}$ such that   
    for every $i$, 
    $\m^\mu\subset\mathfrak a_i\subset \m$ and 
    $$\mathrm{lim}_{i\to\infty}\mathrm{md}(\mathrm{lct}(x;X, \mathfrak a_i))=\infty
    .$$
      Let $\mathfrak a\subset K[[X_1\dots,X_n]]$ be the generic limit of $(\mathfrak a_i)_{i\ge 1}$.
      Let $R_K=K[[X_1,\dots,X_N]]$ with the maximal ideal $\mathfrak m_K$.
     We put $\widetilde{X}= \mathrm{Spec}R_K$.
    Let  $\widetilde{x}$ be the closed point 
    of  $\widetilde{X}$.
    Then by  Lemma \ref{inclusion of generic limit}, 
    we have  $${\mathfrak m}_K^{\mu} \subset{\mathfrak a}\subset\mathfrak m_K.$$ 
     Since ${\mathfrak a}$ is an ${\mathfrak m}_K$-primary ideal, 
     any divisor over  $\widetilde{X}$ computing  $\mathrm{lct}(\widetilde{x};\widetilde X,
     {\mathfrak a})$  has the center at the closed point $\widetilde{x}$.
      Let $E$ be a  divisor over $\widetilde{X}$  computing 
      $\mathrm{lct}({\widetilde{x}};\widetilde X,{\mathfrak a})$.
   
   Then by  Proposition \ref{generic limit}, 
    for every $d\gg \mu$ there is an infinite subset $I'_d\subset \mathbb N$ 
    with the following property:
    for every $i\in I'_d$ there is a divisor $E_i$ over $X$ that computes 
    $\mathrm{lct}(x;X,{\mathfrak a}_i)$, which is equal to  $\mathrm{lct}({\widetilde{x}};
    \widetilde X, {\mathfrak a})$ and we have $$k_E=k_{E_i}\ \mbox{for\ every\ } i\in I'_d.$$
    This is a contradiction to the assumption that  $\mathrm{lim}_{i\to\infty}\mathrm{md}(\mathrm{lct}(x;X, \mathfrak a_i))=\infty$.
\end{proof}

  The above theorem does not hold if we do not assume  the inclusion $\m^\mu\subset \a$
    of ideals for the fixed power $\mu$.

\begin{exmp}
    Let $\mathfrak a_i=(x,y^i)\subset k[x,y]$ for $i\in\mathbb N$.
    Then by the main theorem in \cite{H}, we have
    $$\mathrm{lct}(0; \AA_k^2, \mathfrak a_i)
    =\mathrm{sup}\{t\in \mathbb R_{\ge 0}\mid  (1,1)\in t\cdot\mathrm{Newt}(x,y^i)
   \},$$
      where $\mathrm{Newt}(x,y^i) \subset\mathbb R_{\ge 0}^2$ is 
       the Newton polygon spanned by $x$ and $y^i$.
      Therefore $$\mathrm{lct}(0; \AA_k^2, \mathfrak a_i)=(i+1)/i.$$
      Let $E_i$ be a prime divisor over $\AA_k^2$ computing  
     $\mathrm{lct}(0; \AA_k^2, \mathfrak a_i)$.
      Then $$({k_{E_i}+1})/{\mathrm{val}_{E_i}(\mathfrak a_i)}=(i+1)/i.$$
      We can rewrite this as $i({k_{E_i}+1})=(i+1){\mathrm{val}_{E_i}(\mathfrak a_i)}.$
      Note that $k_{E_i}$ and $\mathrm{val}_{E_i}(\mathfrak a_i)$ are integers.
      Since $i$ and $i+1$ are coprime, the integer $k_{E_i}+1$ is divisible by $i+1$.
      Therefore  $k_{E_i}+1\ge i+1$.
      This implies that $\mathrm{lim}_{i\to\infty}\mathrm{md}(\mathrm{lct}(0; \AA_k^2,\mathfrak a_i))=\infty$.
\end{exmp}

\begin{rem} 
      In the theorem we can replace the condition $\m^\mu\subset \b$ by the following:
      $$\ell(\o_X/\b)\leq \mu,$$
      because from this inequality, the condition $\m^\mu\subset \b$ follows.
      
      By the same reason, it is also possible to replace the condition  
      $\m^\mu\subset \b$ by the following inequality of the multiplicity $e(\b)$ of $\b$: 
      $$e(\b)\leq \mu.$$

\end{rem}


\vskip1truecm

\noindent Shihoko Ishii, \\ Department of Mathematics, Tokyo Woman's Christian University,\\
2-6-1 Zenpukuji, Suginami, 167-8585 Tokyo, Japan.\\

\end{document}